\pgfplotsset{compat=1.10}
\numberwithin{equation}{section}
\newtheorem{theorem}{Theorem}[section]
\newtheorem{lemma}[theorem]{Lemma}
\newtheorem{proposition}[theorem]{Proposition}
\newtheorem{definition}[theorem]{Definition}
\newtheorem*{step1}{Step 1}
\newtheorem*{step2}{Step 2}
\newtheorem*{step3}{Step 3}
\theoremstyle{remark}
\newtheorem{remark}[theorem]{Remark}
\newcommand{\rn}{\mathbb{R}^{N}}
\newcommand{\X}{\mathbf{X}}
\newcommand{\Y}{\mathscr{L}^{-1}_{2,N-2}}
\newcommand{\Z}{\mathbf{Z}}
\newcommand{\Ball}{|B(x,R)|}
\newcommand{\IntC}{\int_{0}^{R^2}\int_{B(x,R)}}
\newcommand{\dv}{\mbox{div}\hspace{0.1cm}}
\begin{document}
	\allowdisplaybreaks
	
	\title[]
	{Well-posedness for chemotaxis-fluid models in  arbitrary dimensions}

	\author{Gael Yomgne Diebou}
	\address{ The Fields Institute for Research in Mathematical Sciences
		\\
		222 College Street, 2nd floor, Toronto, Ontario
		\\
		M5T 3J1 Canada} \email{gyomgned@fields.utoronto.ca}

	\thanks{The author acknowledges the support of 
		the DAAD through the program ''Graduate School Scholarship Programme, 2018'' (Number 57395813) and the  Hausdorff Center for Mathematics at Bonn}

	\date{\today}
	
	\subjclass[2010]{92C17, 35Q35, 35K55, 35A02, 42B35.}
	
	\keywords{Chemotaxis-Navier-Stokes equations, Keller-Segel model, double chemotaxis system, uniqueness, Campanato space, Carleson measures, mass conservation, nonnegativity preservation}
	
	\begin{abstract}
		We study the Cauchy problem for the chemotaxis Navier-Stokes equations and the Keller-Segel-Navier-Stokes  system. Local-in-time and global-in-time solutions satisfying fundamental properties such as mass conservation and nonnegativity preservation are constructed for low regularity data in $2$ and higher dimensions under suitable conditions. Our initial data classes involve a new scale of function space, that is $\Y(\rn)$ which collects  divergence of vector-fields with components in the square Campanato space $\mathscr{L}_{2,N-2}(\rn)$, $N>2$ (and can be identified with the homogeneous Besov space $\dot{B}^{-1}_{22}(\rn)$ when $N=2$) and are shown to be optimal in a certain sense. Moreover, uniqueness criterion for global solutions is obtained under certain limiting conditions.
	\end{abstract}
	
	\maketitle
	
	\allowdisplaybreaks

	\normalsize
	
	\section{Introduction}
	\setcounter{equation}{0}
	\label{sec:1}
	Micro-organisms (e.g. bacteria) have very limited ability to adapt to fluid environments due to their small size. They respond to detectable change by swimming towards specific regions. The orientation mechanism by which they approach  or are repelled from a chemical source is known as chemotaxis. When the fluid is incompressible, upon assuming that swimmers contribute at a very small scale to the swimmers-fluid suspension and that hydrodynamics interactions between swimmers (e.g. cell-cell interaction, which can lead to  collective motion, see for instance \cite{CM} and cited works therein) are negligible, the authors in  \cite{Tu} proposed the following mathematical model
	\begin{align}\label{ME} 
		\begin{cases}
			\partial_tc-D_c\Delta c+ nf(c) +u\cdot\nabla c=0&\mbox{in}\hspace{0.2cm}\Omega\times (0,T)\\
			\partial_tn-D_n\Delta n+\rho\dv (n\chi(c)\nabla c)+u\cdot\nabla n=0&\mbox{in}\hspace{0.2cm}\Omega\times (0,T)\\
			\partial_tu+u\cdot \nabla u-\nu\Delta u+n\nabla\Phi+ \nabla p=0&\mbox{in}\hspace{0.2cm}\Omega\times (0,T)\\
			\dv u=0&\mbox{in}\hspace{0.2cm}\Omega\times (0,T)
		\end{cases}   
	\end{align}
	where $c$ is the concentration of oxygen, $n$ is the density of cells, $u$ is the velocity field of the fluid governed by the incompressible Navier-Stokes equations with scalar pressure $p$ and viscosity $\nu$. The time-independent  gravitational force exerted from a bacteria onto the fluid is modelled through $\nabla\Phi$. The constant $\rho$ represents the magnitude of chemotaxis and $D_c,D_n$ are diffusion coefficients. 
	The function $f(c)$ models the inactivity level caused by a low supply of oxygen  and $\chi(c)$ is a suitable cut-off function (usually determined by experiments). The second equation in \eqref{ME} describes the mass balance equation for the cells combining the advection effect (modelled through $u\cdot \nabla n$), the chemotactic effect or the migration towards regions of high concentration of oxygen (modelled by $\dv (n\chi(c)\nabla c)$) and the diffusion of cells (modelled through $D_n\Delta$).

	The analysis of the Cauchy problem for Syst. \ref{ME} seems challenging from a mathematical point of view and a lot of effort over the past recent years have been devoted to the understanding of its dynamics with a particular focus on the existence of local and global solutions as well as their qualitative behaviour (long-time asymptotic, stability, blow-up,...). Some of the main challenges arising in the analysis of Syst. \ref{ME} are inherited from the Navier-Stokes equations. When $\Omega\subset \rn$ is a (sufficiently) smooth bounded domain, upon neglecting the contribution of the convection term $u\cdot \nabla u$ in \eqref{ME}, Lorz \cite{Lo} obtained the existence of local-in-time weak solutions for the associated initial boundary value problem  with no-flux  boundary conditions in dimensions $N=2,3$ for $\chi(c)\equiv const.$ under some monotonicity and differentiabilily condition on $f$. Still in absence of the convection term, global well-posedness in $\mathbb{R}^2$ was proved by Duan, Lorz \& Markowich \cite{DLM} for non-constant smooth $\chi$ under a smallness assumption on either the gravitational force $\nabla \Phi$ or the initial concentration $c_0$. Moreover, in presence of a convection term, they established the existence of classical solutions in $\mathbb{R}^3$ using uniform a priori estimates  under a suitable smallness condition on the initial data in $H^3(\mathbb{R}^3)$ and derived time-decay rate of solutions near constant steady states. For $\Omega\subset \mathbb{R}^3$ smooth and bounded, Winkler \cite{Wi3} constructed global weak solutions  under some structural and strong smoothness assumptions on $f$ and $\chi$. Under very similar requirements, the same author in \cite{Wi2} introduced the notion of eventual energy solutions and proved that the initial-boundary value problem for Syst. \ref{ME} (with homogeneous Neumann conditions) admits at least one such solution. Existence of smooth local solutions in higher order Sobolev space and blow-up issues have been considered by Chae, Kang \& Lee \cite{CKL} for $N=2,3$. Their result was later extended by Zhang in \cite{Zha} in the framework of Besov spaces by means of Fourier localisation technique. We point out, however that although global weak solution exist in three dimensions, the question of their uniqueness seems unsolved. 
	Regarding large data global existence, we quote the works \cite{LL,Wi1,ZZ} and references therein. Other interesting related models with inhomogeneous tensor-valued chemotactic sensitivity can be found in \cite{CL,Wi4}.  A popular model considers the choices $f(c)=c$, $\chi(c)=1$; $D_c=D_n=\rho=\nu=1$ and $\Omega=\rn$ turning \eqref{ME} into the system
	\begin{align}\label{main-eq} \tag{CNS}
		\begin{cases}
			\partial_tc-\Delta c+c n+u\cdot\nabla c=0&\mbox{in}\hspace{0.2cm}\rn\times (0,T)\\
			\partial_tn-\Delta n+\dv (n\nabla c)+u\cdot\nabla n=0&\mbox{in}\hspace{0.2cm}\rn\times (0,T)\\
			\partial_tu-\Delta u+u\cdot \nabla u+n\nabla\Phi+ \nabla p=0&\mbox{in}\hspace{0.2cm}\rn\times (0,T)\\
			\dv u=0&\mbox{in}\hspace{0.2cm}\rn\times (0,T).
		\end{cases}   
	\end{align}
	where $T\in (0,\infty]$.
	Recently, small data global existence and large data local existence in critical Besov space have been investigated in \cite{CLY}. Kozono, Miura \& Sugiyama \cite{KMS} obtained global existence and large time asymptotic behaviour of mild solutions to \eqref{main-eq} for initial data \[[c_0,n_0,u_0]\in L^{\infty}(\rn)\times L^{N/2,\infty}(\rn)\times L^{N,\infty}(\rn)\hspace{0.2cm} \mbox{with}\hspace{0.2cm} \nabla c\in L^{N,\infty}(\rn)\] when $N\geq 3$ (and $n_0\in L^{1}(\mathbb{R}^2)$ when $N=2$) having a sufficiently small norm. Their proof relies on heat semigroup estimates in weak-Lebesgue spaces combined with the implicit function theorem. Using an argument based on Picard iteration, the authors in \cite{YFS} (see also \cite{FP} for similar results pertaining to a generalized model) extended the latter result by considering small data 
	$[c_0,n_0,u_0]\in L^{\infty}(\rn)\times \mathcal{N}^{-s_1}_{p_1,\lambda,\infty}(\rn)\times \mathcal{N}^{-s_2}_{p_2,\lambda,\infty}(\rn)\hspace{0.2cm} \mbox{with}\hspace{0.2cm} \nabla c\in \mathcal{N}^{-s_3}_{p_3,\lambda,\infty}(\rn)$ for some parameters $s_j>0$ depending on $0\leq \lambda<N$, $N\geq 2$ and $1<p_j<\infty$, $j=1,2,3$.  Here $\mathcal{N}^{-s}_{p,\lambda,\infty}(\rn)$ stands for the homogeneous Besov-Morrey space (see below for the definition). We note that in the aforementioned results, the extra assumption on the gradient of the initial concentration plays a central role and as we will show later, this hypothesis is unnecessary and can be discarded.  A common feature among these works is that they are all obtained in critical spaces, i.e. invariant under natural scaling in contrast to energy solutions. It is therefore interesting to find the largest critical space where solutions exist and satisfy expected properties including mass conservation and nonnegativity in the first two components of the solution.
	
	The main purpose of this paper is the study of the well-posedness for the Cauchy problem \eqref{main-eq} and its generalized model (cf. Syst. \ref{Sd-eq} in Section \ref{sec:2}). In particular, we are interested in the existence of small data global-in-time and large data local-in-time solutions in the largest scaling and translation invariant function spaces.  The method carried out here is dimension independent so that our results are valid in any space dimension larger or equal to two. To motivate our study, we briefly comment on the work \cite{KT} by Koch \& Tataru. Introducing a new function space (see $\X_3$ below) based on the intrinsic properties of solutions, they investigated the local and global well-posedness issues for the incompressible Navier-Stokes equations (the third equation in \eqref{main-eq} with $\Phi=0$). More precisely, they proved existence of a unique small global solution under the conditions that the initial data is divergence-free and has small $BMO^{-1}$-norm and of small local solutions for divergence-free initial data in $\overline{VMO^{-1}}$. As observed by the authors in \cite{CG}, their results seem to be the endpoint case for small data global existence. 
	Coming back to \eqref{main-eq}, we carefully analyse each coupling term to find necessary conditions on each component for which the equation is meaningful. This leads to well-posedness results which are optimal in a sense which is made precise later. In addition, it is proved that mass (of the initial density $n_0$) and nonnegativity of $c_0$ and $n_0$ are conserved in finite time. Proving that mild solutions to \eqref{main-eq} and \eqref{Sd-eq} satisfy these basic properties is nontrivial and have not been addressed in any of the cited references above. We argue here that the framework within which the analysis is carried out plays a crucial role in establishing such properties. In fact, solutions we construct can be shown to be equivalent to some notion of weak solutions (provided, of course that extra assumptions are put on $n_0$ and $u_0$) since each component of the solution, the gradient of the first component, belongs to $L^2_{loc}$ in space and time in addition to the force being in $L^2_{loc}$. We believe this observation might be useful in many respects (e.g. nonuniqueness of weak solutions under no smallness assumption on the force). We discuss at the end of Section \ref{sec:2} possible applications of our results in connection with regularity and stability of solutions to chemotaxis fluid models.       
	\section{Function spaces and main results}\label{sec:2}
	System \eqref{main-eq} is scaling and translation invariant provided $\Phi\in \mathcal{S}'(\rn)$ is such that $\nabla\Phi$ is homogeneous of degree $-1$. More precisely, if $[c,n,u]$ solves \eqref{main-eq} (in a classical sense), then $[c_{\delta},n_{\delta},u_{\delta}]$ with
	\begin{equation}\label{self-simi}
		c_{\delta}(x,t)=c(\delta x,\delta^2t), \hspace{0.12cm}n_{\delta}(x,t)=\delta^2n(\delta x,\delta^2t),\hspace{0.12cm}u_{\delta}(x,t)=\delta u(\delta x,\delta^2t),\quad \delta>0   
	\end{equation}
	is another solution. On the other hand, a weaker requirement on the unknowns $c,n$ and $u$ for \eqref{main-eq} to make sense is that 
	\begin{align}\label{min-cond}
		\begin{cases}
			c\in L^{\infty}_{loc}(\rn\times [0,\infty)), \nabla c\in L^2_{loc}(\rn\times (0,\infty))\\ 
			n\in L^2_{loc}(\rn\times (0,\infty))\\
			u\in L^2_{loc}(\rn\times (0,\infty)).
		\end{cases}    
	\end{align} 
	Thus we look for initial data $[c_0,n_0,u_0]$ whose caloric extension $[\widetilde{c_0},\widetilde{n_0},\widetilde{u_0}]$ satisfy the scaling and translation invariant analogue of condition \eqref{min-cond}, that is,
	\begin{subequations}\label{min-cond-1}
		\begin{numcases}{}
			\label{cd:c}
			\sup_{t>0}\|\widetilde{c_0}(t)\|_{L^{\infty}(\rn)}+\sup_{x,R>0}R^{-N}\int_{B_R(x)}\int_0^{R^2}|\nabla \widetilde{c_0}(y,t)|^2dtdy<\infty\\ 
			\label{cd:n}
			\sup_{x,R>0}R^{2-N}\int_{B_R(x)}\int_0^{R^2}|\widetilde{n_0}(y,t)|^2dtdy<\infty\\
			\label{cd:u}
			\sup_{x,R>0}R^{-N}\int_{B_R(x)}\int_0^{R^2}|\widetilde{u_0}(y,t)|^2dtdy<\infty.
		\end{numcases}
	\end{subequations}
	It is well-known that the finiteness of the second term in \eqref{cd:c}  is equivalent to  $c_0$ being an element of $BMO(\rn)$ with the equivalence of semi-norms, see for instance \cite{St}. However, the requirement that $c$ be  bounded in space and time rules out the choice of $c_0$ in $BMO(\rn)$. It rather seems plausible to prescribe the initial data $c_0$ in a subclass namely, in $L^{\infty}(\rn)$. On the other hand, condition \eqref{cd:u} is equivalent to $u_0\in BMO^{-1}(\rn)$, see \cite{KT}.  By analogy to the latter  cases, one would like to relate condition \eqref{cd:n} to some class of  functions defined on $\rn$ in an extrinsic manner.
	\begin{definition}\label{def:M-1}
		Let $N>2$. A tempered distribution $f$ on $\rn$ is an element of $\Y(\rn)$ if
		its caloric extension $\widetilde{f}=e^{t\Delta} f$ satisfies
		\begin{equation}\label{norm-Y}
			\|f\|_{\Y(\rn)}:=\sup_{x,R>0}\bigg(\Ball^{2/N-1}\IntC|\widetilde{f}(y,t)|^2dydt\bigg)^{1/2}<\infty.    
		\end{equation}
	\end{definition}
	Carleson measures characterization of square Campanato spaces (see \cite{JXY}) suggests that $\Y(\rn)$ may be regarded as the space of derivatives of distributions in the Campanato class $\mathscr{L}_{2,N-2}(\rn)$.  
	\begin{lemma}
		A tempered distribution $f$ belongs to $\Y(\rn)$ if and only if there exists  $f_j\in \mathscr{L}_{2,N-2}(\rn)$, $j=1,...,N$ such that $f=\displaystyle\sum_{j=1}^N \partial_jf_j$. 
	\end{lemma}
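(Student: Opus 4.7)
The plan is to reduce both statements to a Carleson-measure condition on the caloric extension, and to connect the two via a heat-semigroup (equivalently, Riesz-multiplier) calculation. For the implication ($\Leftarrow$), I would invoke the Carleson-type characterization of the square Campanato space from \cite{JXY}, namely
\[
\|g\|_{\mathscr{L}_{2,N-2}}^{2} \;\sim\; \sup_{x,\,R>0} R^{2-N} \int_{0}^{R^{2}}\!\!\int_{B(x,R)} |\nabla \widetilde{g}(y,t)|^{2}\,dy\,dt.
\]
Applying this to each $f_{j}$, combining with the linearity $\widetilde{f} = \sum_{j}\partial_{j}\widetilde{f_{j}}$ and the pointwise bound $|\widetilde{f}|^{2}\le N\sum_{j}|\nabla \widetilde{f_{j}}|^{2}$ (Cauchy--Schwarz), one immediately obtains $\|f\|_{\Y}\lesssim \sum_{j=1}^{N}\|f_{j}\|_{\mathscr{L}_{2,N-2}}$.

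For the converse ($\Rightarrow$), the natural candidate is $f_{j}:= -\partial_{j}(-\Delta)^{-1}f$, which satisfies $\sum_{j}\partial_{j}f_{j}=f$ formally; one realizes $(-\Delta)^{-1}$ through the heat semigroup $\int_{0}^{\infty}e^{s\Delta}\,ds$, working modulo polynomials as in the classical BMO theory. A Fourier-multiplier (or semigroup) computation then yields
\[
\partial_{k}\widetilde{f_{j}}(y,t) = -R_{j}R_{k}\,\widetilde{f}(y,t),
\]
where $R_{j},R_{k}$ denote the spatial Riesz transforms. By the Carleson characterization invoked above, verifying that each $f_{j}$ lies in $\mathscr{L}_{2,N-2}$ therefore reduces to showing that the iterated Riesz transforms preserve the tent-type norm
\[
\|F\|_{M}^{2} := \sup_{x,\,R>0} R^{2-N} \int_{0}^{R^{2}}\!\!\int_{B(x,R)} |F(y,t)|^{2}\,dy\,dt.
\]

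To establish $\|R_{j}R_{k}F\|_{M}\lesssim \|F\|_{M}$, I would split $F = F\,\chi_{B(x,2R)} + F\,\chi_{B(x,2R)^{c}}$ inside the Carleson cylinder $B(x,R)\times(0,R^{2})$. The local piece is controlled by the global $L^{2}$-boundedness of $R_{j}R_{k}$ together with the doubling of the hypothesis at the enlarged scale $2R$. The far piece is treated pointwise via the kernel estimate $|K_{R_{j}R_{k}}(y-z)|\lesssim |z-x|^{-N}$, valid for $y\in B(x,R)$ and $z\notin B(x,2R)$, followed by a dyadic annular decomposition $|z-x|\sim 2^{\ell}R$ and an application of the hypothesis at each scale $2^{\ell}R$; the resulting geometric series converges thanks to $N>2$. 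The main obstacle is precisely this boundedness of Riesz transforms on $\|\cdot\|_{M}$: the norm ties the spatial radius $R$ to the matching time-scale $R^{2}$, and one must verify that the non-local spatial operator interacts compatibly with this coupling. Once the dyadic argument is in place, the two implications combine with equivalent constants depending only on $N$, establishing the stated characterization of $\Y(\rn)$ as the divergence image of vector fields in $\mathscr{L}_{2,N-2}(\rn)$.
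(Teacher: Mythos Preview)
Your overall strategy matches the paper's exactly: the $(\Leftarrow)$ direction via the Carleson characterization of $\mathscr{L}_{2,N-2}$ from \cite{JXY}, and for $(\Rightarrow)$ the choice $f_j=-\partial_j(-\Delta)^{-1}f$ together with a near/far splitting to show that the iterated Riesz transforms $R_jR_k$ preserve the tent norm $\|\cdot\|_M$. The local piece via global $L^2$-boundedness is fine and coincides with the paper's use of Plancherel.

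There is, however, a genuine gap in your far-piece argument. The kernel bound $|K_{R_jR_k}(y-z)|\lesssim |y-z|^{-N}$ is \emph{not} integrable at infinity, and since for each fixed $t>0$ one only knows $e^{t\Delta}f\in L^\infty(\rn)$ (from the embedding $\Y\subset\dot B^{-2}_{\infty,\infty}$), the integral
\[
\int_{|z-x|\ge 2R} K_{R_jR_k}(y-z)\,\widetilde f(z,t)\,dz
\]
is in general not absolutely convergent for fixed $t$; the ``pointwise'' bound you propose therefore does not make sense as written. (Your remark that the dyadic series converges ``thanks to $N>2$'' is also not quite right: with the naive $|y-z|^{-N}$ kernel the resulting series is $\sum_\ell 2^{-\ell}$, which converges for any $N$, but this formal computation hides the divergence at fixed $t$.)

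The paper repairs this in the standard BMO/Campanato way: one subtracts a mollified version $\varphi_R\ast R_jR_k\widetilde f$ before splitting near/far. The subtracted low-frequency part is controlled in $L^\infty$ by the continuous embedding $\Y(\rn)\subset \dot B^{-2}_{\infty,\infty}(\rn)$, giving $\|\varphi_R\ast R_jR_k\widetilde f\|_{L^\infty}\le C R^{-2}\|f\|_{\Y}$, which integrates over $B(x,R)\times(0,R^2)$ to the right bound $CR^{N-2}$. The remaining piece $(I-\varphi_R\ast)R_jR_k[(1-\zeta_{x,R})\widetilde f]$ now has the improved kernel decay $R\,|x-z|^{-(N+1)}$ (one extra power from the cancellation in $I-\varphi_R\ast$, cf.\ \cite[p.~161]{L}), which \emph{is} absolutely integrable against a bounded function and makes the dyadic annular argument rigorous. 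In short, the missing ingredient in your sketch is this ``constant correction'' device together with the Besov embedding that controls it.
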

	The proof of this lemma is postponed to the Appendix for convenience. Let $r>0$ and  the open ball $B_r(x)=\{y\in \rn: |y-x|<r\}$. For $1\leq p< \infty$, $0\leq \mu<N$, recall the Morrey space $M_{p,\mu}(\rn)$ defined as 
	\begin{align}\label{Morrey}
		M_{p,\mu}(\rn)=\{f\in L^p_{loc}( \rn): \|f\|_{M_{p,\mu}(\rn)}<\infty\}    
	\end{align}
	where 
	\begin{align}\label{l-norm_morrey}
		\|f\|_{M_{p,\mu}(\rn)}:=\sup_{x\in\rn,r>0}r^{-\frac{\mu}{p}}\|f\|_{L^{p}(B_r(x))}.    
	\end{align}
	With $\overline{f}_{B_r(x)}:=\fint_{B_r(x)}f(y)dy$ denoting the integral mean of $f$ over the ball $B_r(x)$, the Campanato space $\mathscr{L}_{p,\lambda}(\rn)$, $\lambda\in [0,N+p)$ collects all locally integrable functions $f$ such that $\|f\|_{\mathscr{L}_{p,\lambda}(\rn)}$  is finite where 
	\begin{align}\label{campanato-norm}
		\|f\|_{\mathscr{L}_{p,\lambda}(\rn)}=\sup_{x\in \rn, r>0}\bigg(r^{-\lambda}\int_{B_r(x)}\big|f(x)-\overline{f}_{B_r(x)}\big|^p\bigg)^{1/p}.    
	\end{align}
	The expression in \eqref{campanato-norm} defines a semi-norm on $\mathscr{L}_{p,\lambda}(\rn)$ and upon identifying functions which differ by a real constant, this space becomes Banach. Note that $\mathscr{L}_{p,N}(\rn)\simeq BMO(\rn)$
	where $BMO(\rn)$ is the  space of bounded mean oscillations and whenever $N<\lambda<N+p$, the space $\mathscr{L}_{p,\lambda}(\rn)$ is equivalent to the homogeneous H\"{o}lder space $C^{0,\alpha}(\rn)$, $\alpha=\frac{\lambda-N}{p}\in (0,1)$ modulo constants. 
	We also define the local Campanato space $\mathscr{L}_{p,\lambda;R}(\rn)$ (resp. local Morrey space $M_{p,\lambda;R}(\rn)$) by taking in \eqref{campanato-norm} (resp. in \eqref{l-norm_morrey}) balls of radius $R$ and smaller. The space $\mathscr{L}^{-1}_{2,N-2;R}(\rn)$ is defined analogously and we use the notation $\mathscr{L}^{-1}_{2,N-2}(\rn)$ for $\mathscr{L}^{-1}_{2,N-2;\infty}(\rn)$.
	\begin{definition}
		A tempered distribution $f$ is an element of $BMO^{-1}(\rn)$  if there exists $f_j\in BMO(\rn)$, $j=1,...,N$ such that $f=\sum_{j=1}^{N}\partial_jf_j$. This space is equipped  with the norm
		\begin{equation*}
			\|f\|_{BMO^{-1}(\rn)}=\inf\bigg\{\sum_{j=1}^N\|f_j\|_{BMO(\rn)}: f=\sum_{j=1}^N\partial_jf_j\bigg\}.    \end{equation*}
		The local space $BMO^{-1}_R(\rn)$ is defined similarly as above by replacing the $BMO$ semi-norm by its local version. The Sarason space of vanishing mean oscillations is defined as 
		\begin{align*}
			VMO(\rn)=\big\{h\in BMO(\rn):\lim_{R\rightarrow 0}\|h\|_{BMO_R(\rn)}=0\big\}.    
		\end{align*}
		We say that $f\in \overline{VMO^{-1}}(\rn)$ if 
		\begin{align*}
			\lim_{R\rightarrow 0}\|f\|_{BMO^{-1}_{R}(\rn)}=0.    
		\end{align*}
		Let $f\in \mathscr{L}^{-1}_{2,\lambda;1}(\rn)$. We say that $f$ belongs to  $\overline{V\mathscr{L}_{2,\lambda}^{-1}}(\rn)$ if 
		\begin{align*}
			\lim_{R\rightarrow 0}\|f\|_{\mathscr{L}^{-1}_{2,\lambda;R}(\rn)}=0.    
		\end{align*}
	\end{definition}

	Recall the definition of Besov-Morrey spaces \cite{KY,M}. Let $\psi$ be a Schwartz function  supported in the annulus $1/2\leq |\xi|\leq 2$ such that $$\displaystyle
	\sum_{j\in \mathbb{Z}}\psi(2^{-j}\xi)=1,\quad \xi\in \rn\setminus\{0\}.$$ 
	Let $\mathcal{F}$ and $\mathcal{S}_{0}'(\rn)$ denote respectively, the Fourier transform and the space of Schwartz distributions in $\rn$ modulo polynomials.
	Denote by $\dot{\Delta}_j$ the Littlewood-Paley projection operator defined as $\dot{\Delta}_jf=\mathcal{F}^{-1}(\psi(2^{-j}\cdot)\mathcal{F}f)$.   The homogeneous Besov-Morrey space $\mathcal{N}^{s}_{p,\lambda,q}(\rn)$ for $s\in \mathbb{R}$, $0\leq \lambda<N$ and $p,q\in [1,\infty]$ is defined as  
	\begin{align*}
		\mathcal{N}^{s}_{p,\lambda,q}(\rn):=\{f\in \mathcal{S}_{0}'(\rn):\|f\|_{\mathcal{N}^{s}_{p,\lambda,q}(\rn)}<\infty\}    
	\end{align*}
	where 
	\[\|f\|_{\mathcal{N}^{s}_{p,\lambda,q}(\rn)}=\begin{cases}
		\bigg(\displaystyle\sum_{j\in \mathbb{Z}}\big(2^{js}\big\|\dot{\Delta}_jf\big\|_{M_{p,\lambda}(\rn)}\big)^{q}\bigg)^{1/q}<\infty,\quad q\in [1,\infty)\\
		\displaystyle\sup_{j\in \mathbb{Z}} \big(2^{js}\big\|\dot{\Delta}_jf\big\|_{M_{p,\lambda}(\rn)}\big),\quad q=\infty.
	\end{cases}\]   
	These spaces were introduced by Kozono and Yamazaki in \cite{KY} and can be regarded as straightforward extensions of homogeneous Besov spaces. As a matter of fact, one has $$\mathcal{N}^{s}_{p,0,q}(\rn)=\dot{B}^{s}_{pq}(\rn),\quad1\leq p,q\leq \infty.$$
	
	\begin{definition}Let $T\in (0,\infty]$. We say that a function $v:\rn\times \mathbb{R}_+\rightarrow \mathbb{R}$, $N>2$ belongs to $\X_{j,T}$, $j=1,2$ if $\|v\|_{\X_{j,T}}$ is finite, 
		\begin{equation*}
			\|v\|_{\X_{1,T}}=\sup_{0<t\leq T}\|v(t)\|_{L^{\infty}(\rn)}+[v]_{\X_{1,T}}
		\end{equation*}
		where
		\begin{equation*}
			[v]_{\X_{1,T}}=\sup_{0<t\leq T}t^{\frac{1}{2}}\|\nabla v(t)\|_{L^{\infty}(\rn)}+\sup_{x\in \rn,0<R\leq T^{\frac{1}{2}}}\bigg(\Ball^{-1}\IntC|\nabla v(y,t)|^2dydt\bigg)^{1/2},   
		\end{equation*}
		and 
		\begin{equation*}
			\|v\|_{\X_{2,T}}=\sup_{0<t\leq T}t\|v(t)\|_{L^{\infty}(\rn)}+\sup_{x\in \rn,0<R\leq T^{\frac{1}{2}}}\bigg(\Ball^{\frac{2}{N}-1}\IntC|v(y,t)|^2dydt\bigg)^{1/2}.    
		\end{equation*}
		The space $\X_{3,T}$ collects all functions $u:\rn\times \mathbb{R}_+\rightarrow \rn$ such that 
		\begin{equation*}
			\|u\|_{\X_{3,T}}=\sup_{0<t\leq T}t^{\frac{1}{2}}\|u(t)\|_{L^{\infty}(\rn)}+\sup_{x\in \rn,0<R\leq T^{\frac{1}{2}}}\bigg(\Ball^{-1}\IntC|u(y,t)|^2dydt\bigg)^{\frac{1}{2}}
		\end{equation*}
		is finite. One simply writes $\X_{j}$ instead of $\X_{j,\infty}$ and adopt the notation $[\cdot]_{\X_{j,\infty}}=[\cdot]_{\X_{j}}$.
	\end{definition}
	It can be easily verified that each of the spaces $\X_{j,T}$ is a Banach space when endowed with the norm $\|\cdot\|_{\X_{j,T}}$, $j=1,2,3$ respectively.  We also remark that $\X_1$ is the parabolic version of the framework used in \cite{YK} for the analysis of the weakly harmonic maps problem and $\X_3$, the Koch-Tataru space \cite{KT}. The above discussion motivates the choice of the class $\X_0$, comprising 3-tuples  $[c_0,n_0,u_0]$ such that
	\begin{equation}\label{ini-cd}
		c_0\in L^{\infty}(\rn),\hspace{0.2cm}n_0\in \Y(\rn)\hspace{0.2cm}\mbox{and}\hspace{0.2cm}u_0\in BMO^{-1}(\rn,\rn).  
	\end{equation}
	Now for $T\in (0,\infty]$, define the spaces \[\X_T=\X_{1,T}\times \X_{2,T}\times \X_{3,T} \hspace{0.2cm}\mbox{and} \hspace{0.2cm}\X_0=L^{\infty}(\rn)\times \Y(\rn)\times BMO^{-1}(\rn)\] with their respective norms  
	\begin{align*}\label{normsX}
		\big\|[c,n,u]\big\|_{\X_T}&:=\|c\|_{\X_{1,T}}+\|n\|_{\X_{2,T}}+\|u\|_{\X_{3,T}},\\    
		\big\|[c_0,n_0,u_0]\big\|_{\X_0}&:=\|c_0\|_{L^{\infty}(\rn)}+\|n_0\|_{\Y(\rn)}+\|u_0\|_{BMO^{-1}(\rn)}. 
	\end{align*}
	When $N=2$, the space $\Y(\rn)$ is replaced by the homogeneous Besov space $\dot{B}^{-1}_{2,2}(\rn)$ and we instead define $\X_{2}$ with the norm 
	\begin{equation*}
		\|v\|_{\X_{2}}=\sup_{t>0}t\|v(t)\|_{L^{\infty}(\mathbb{R}^2)}+\|v\|_{L^2(\rn\times (0,\infty))}.  
	\end{equation*}
	In what follows, $UC(\rn)$ stands for the space of uniformly continuous (real-valued) functions in $\rn$ and $\overline{UC(\rn)}^{L^{\infty}(\rn)}$ is the closure of $UC(\rn)$ in the $L^{\infty}$-norm.
	
	The main results of this paper read as follows.
	\begin{theorem}[Local well-posedness]\label{thm:LWP}
		Let $\Phi$ be a Schwartz distribution such that $\nabla \Phi\in M_{2,N-2}(\rn)$, $N\geq 2$ and $d_0\in UC(\rn)$. For any $u_0$ divergence-free distribution in $\overline{VMO^{-1}}(\rn)$ and for all $[c_0,n_0]$ in $\overline{UC(\rn)}^{L^{\infty}(\rn)}\times \overline{V\mathscr{L}_{2,N-2}^{-1}}(\rn)$, there exist $\delta_0:=\delta_0(d_0)>0$, $T:=T(\delta_0)>0$ and $[c,n,u]$ in  the space $(\varGamma_{\delta_{0}}+\X_{1,T^2})\times \X_{2,T^2}\times \X_{3,T^2}$ solving Eqs. \ref{main-eq} in $(0,T)\times \rn$ provided $\nabla \Phi$ is small enough in the norm of $M_{2,N-2}(\rn)$. Here $\varGamma_{s}=e^{s^2\Delta}d_0$, $s>0$.
	\end{theorem}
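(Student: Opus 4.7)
The plan is to reformulate Eqs. \ref{main-eq} via Duhamel's principle and run a Picard contraction argument in the Banach space $\X_{1,T^2}\times\X_{2,T^2}\times\X_{3,T^2}$ for $T$ small. Letting $\mathbb{P}$ denote the Leray projector, a mild solution satisfies
\begin{align*}
c(t)&=e^{t\Delta}c_0-\int_0^t e^{(t-s)\Delta}\bigl(cn+u\cdot\nabla c\bigr)(s)\,ds,\\
n(t)&=e^{t\Delta}n_0-\int_0^t e^{(t-s)\Delta}\dv\bigl(n\nabla c+un\bigr)(s)\,ds,\\
u(t)&=e^{t\Delta}\mathbb{P}u_0-\int_0^t e^{(t-s)\Delta}\mathbb{P}\bigl(\dv(u\otimes u)+n\nabla\Phi\bigr)(s)\,ds.
\end{align*}
Because $c_0\in L^\infty(\rn)$ is not small in general, I would write $c=\varGamma_{\delta_0}+\tilde c$ with $\tilde c\in\X_{1,T^2}$ as the new unknown, and exploit $c_0\in\overline{UC(\rn)}^{L^\infty(\rn)}$ to pick first $d_0\in UC(\rn)$ with $\|c_0-d_0\|_\infty$ as small as desired, and then $\delta_0$ small enough that $\|d_0-\varGamma_{\delta_0}\|_\infty$ is also small by uniform continuity of $d_0$. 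Thus the effective initial datum $\tilde c_0=c_0-\varGamma_{\delta_0}$ has small $L^\infty$-norm, while the profile $\varGamma_{\delta_0}$ satisfies explicit bounds such as $\|\varGamma_{\delta_0}\|_\infty\le\|d_0\|_\infty$, $\|\nabla\varGamma_{\delta_0}\|_\infty\lesssim\delta_0^{-1}\|d_0\|_\infty$ and controlled Carleson norms.

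The next step is to show that the linear evolutions of the data are small in the corresponding $\X_{j,T^2}$-norms as $T\downarrow 0$. For $n_0$ and $u_0$, this is exactly the content of the vanishing assumptions $n_0\in\overline{V\mathscr{L}^{-1}_{2,N-2}}(\rn)$ and $u_0\in\overline{VMO^{-1}}(\rn)$, which guarantee that the Carleson quantities in \eqref{cd:n}--\eqref{cd:u} vanish as $R\to 0$; pointwise heat-kernel decay handles the $\sup t\|e^{t\Delta}n_0\|_\infty$ and $\sup t^{1/2}\|e^{t\Delta}u_0\|_\infty$ pieces. For $\tilde c$, smallness follows from $\|\tilde c_0\|_\infty$ being small together with caloric estimates for the Carleson integrals of $|\nabla e^{t\Delta}\tilde c_0|^2$. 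In parallel, I would establish a collection of multilinear estimates in the spirit of Koch--Tataru: the classical bound $\X_{3,T}\times\X_{3,T}\to\X_{3,T}$ for the Navier--Stokes trilinear form, together with new bounds $\X_{1,T}\times\X_{2,T}\to\X_{1,T}$ for $cn$, $\X_{3,T}\times\X_{1,T}\to\X_{1,T}$ for $u\cdot\nabla c$ (using the $t^{1/2}\|\nabla c\|_\infty$ component of $\|c\|_{\X_{1,T}}$), $\X_{2,T}\times\X_{1,T}\to\X_{2,T}$ for $\dv(n\nabla c)$, $\X_{3,T}\times\X_{2,T}\to\X_{2,T}$ for $\dv(un)$, and finally a Morrey/Carleson bound $\|\int_0^t e^{(t-s)\Delta}\mathbb{P}(n\nabla\Phi)\,ds\|_{\X_{3,T}}\lesssim\|\nabla\Phi\|_{M_{2,N-2}}\|n\|_{\X_{2,T}}$. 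Each of these rests on pointwise heat and heat-gradient kernel estimates combined with fractional integration in Morrey/Campanato spaces and the Carleson characterization of $\Y(\rn)$ announced in the excerpt.

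Combining smallness of the linear piece with the multilinear estimates, the Duhamel map becomes a strict contraction on a small closed ball of $\X_{1,T^2}\times\X_{2,T^2}\times\X_{3,T^2}$ centered at $(\varGamma_{\delta_0}+e^{t\Delta}\tilde c_0,\,e^{t\Delta}n_0,\,e^{t\Delta}\mathbb{P}u_0)$, provided $T$ is chosen small (depending on $d_0$ via $\delta_0$) and $\|\nabla\Phi\|_{M_{2,N-2}}$ is small. The step I expect to be the main obstacle is the bilinear estimate $\dv(n\nabla c)\to\X_{2,T}$: the divergence transfers a derivative onto the heat kernel, whose singularity must be matched against the $M_{2,N-2}$-type scaling built into $\X_{2,T}$, while $\nabla c=\nabla\varGamma_{\delta_0}+\nabla\tilde c$ has the non-small summand $\nabla\varGamma_{\delta_0}$. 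It is precisely to absorb this non-small contribution that we set up the iteration relative to the regularized profile $\varGamma_{\delta_0}$, whose gradient is bounded with a controlled Carleson norm, while $\nabla\tilde c$ carries the smallness required for contraction.
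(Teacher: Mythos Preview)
Your overall strategy matches the paper's exactly: subtract the regularized profile $\varGamma_{\delta_0}=e^{\delta_0^2\Delta}d_0$ from $c$, invoke the bilinear estimates you list (these are precisely the paper's Lemma~\ref{lem:bilinear-est}), use the vanishing hypotheses on $n_0,u_0$ to make $e^{t\Delta}n_0,e^{t\Delta}u_0$ small in $\X_{2,T^2},\X_{3,T^2}$, and run a contraction on a small ball.

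There is, however, one genuine gap. You write $\|\nabla\varGamma_{\delta_0}\|_\infty\lesssim\delta_0^{-1}\|d_0\|_\infty$ and describe $\nabla\varGamma_{\delta_0}$ as a ``non-small summand'' whose contribution is merely ``controlled''. This crude bound is not enough: in the $n$-equation the term $B_2(n,\nabla\varGamma_{\delta_0})$ contributes a Lipschitz constant $\sim C\|\nabla\varGamma_{\delta_0}\|_{\X_{3,T^2}}$, and with your bound this is of order $T\delta_0^{-1}\|d_0\|_\infty\le\|d_0\|_\infty$ when $T\le\delta_0$, which is \emph{not} small and prevents the contraction from closing. The fix (this is the content of the paper's Lemma~\ref{lem:UC-pert}, following Koch--Lamm) is the sharper estimate coming from uniform continuity: if $\varepsilon_0$ is the oscillation of $d_0$ at scale $\delta_0$, then
\[
\delta_0\|\nabla\varGamma_{\delta_0}\|_{L^\infty}+\delta_0^2\|\nabla^2\varGamma_{\delta_0}\|_{L^\infty}\le C\varepsilon_0,
\]
so that for $T_0=\min(\delta_0,R)$ one actually has $\|\nabla\varGamma_{\delta_0}\|_{\X_{3,T_0^2}}\le C\varepsilon_0$, which \emph{is} small. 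In other words, $\nabla\varGamma_{\delta_0}$ is not a non-small obstruction to be absorbed by the ansatz; it is genuinely small in the relevant norm, and this smallness is what drives the contraction. You also omit the forcing term $\Delta\varGamma_{\delta_0}$ that appears in the $\tilde c$-equation (since $\varGamma_{\delta_0}$ is time-independent, $\partial_t\tilde c-\Delta\tilde c$ picks up $+\Delta\varGamma_{\delta_0}$); its Duhamel contribution is handled by the same bound $\delta_0^2\|\Delta\varGamma_{\delta_0}\|_\infty\le C\varepsilon_0$.
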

The solution constructed in the previous theorem preserves mass and the nonnegativity. In  more details, the statement reads as follows. 
	
	\begin{proposition}\label{prop:mass-con}
		Consider the hypotheses of Theorem \ref{thm:LWP} and $n_0\in L^1(\rn)\cap \overline{V\mathscr{L}_{2,N-2}^{-1}}(\rn)$. Then there exist $T>0$ and a unique solution $(c,n,u)$ to Eqs. \ref{main-eq}  where the component $n\in C([0,T);L^1(\rn))\cap \X_{2,T^2}$ and obeys the property
		\begin{equation}\label{mass conservation}
			\int_{\rn}n(x,t)dx=\int_{\rn}n_0(x)dx,\quad t\in (0,T).    
		\end{equation}
		If $c_0,n_0\geq 0$ then $c(x,t)\geq 0$ and $n(x,t)\geq 0$ almost everywhere for $(x,t)\in \rn\times [0,T)$. 
	\end{proposition}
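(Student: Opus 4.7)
The plan is to augment the Picard iteration from Theorem~\ref{thm:LWP} with an additional $L^1$-bookkeeping for $n$, then obtain nonnegativity via regularization combined with a maximum-principle argument exploiting the divergence form of the $n$-equation. For the $L^1$-boundedness and continuity of $n$, I would start from the mild representation
\[
n(t) = e^{t\Delta}n_0 - \int_0^t e^{(t-s)\Delta}\dv\bigl(n(s)(\nabla c(s) + u(s))\bigr)\, ds
\]
and rerun the fixed-point scheme in the augmented space $\X_{2,T^2} \cap C([0,T); L^1(\rn))$. The two estimates at the core of the argument are the heat-kernel bound $\|e^{\tau\Delta}\dv F\|_{L^1} \lesssim \tau^{-1/2}\|F\|_{L^1}$ and the $L^\infty$-controls $\|\nabla c(s)\|_{L^\infty}, \|u(s)\|_{L^\infty} \lesssim s^{-1/2}\bigl(\|c\|_{\X_{1,T^2}} + \|u\|_{\X_{3,T^2}}\bigr)$ built into the $\X$-norms. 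H\"older together with the Beta-function identity $\int_0^t(t-s)^{-1/2}s^{-1/2}ds = \pi$ closes the $L^1$-bound on a time window where the $\X$-norms of $(c, u)$ are small, and continuity $t \mapsto n(t) \in L^1$ follows from strong continuity of the heat semigroup on $L^1$ combined with $L^1$-control of the Duhamel integral.

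Identity~\eqref{mass conservation} is then read off the Duhamel formula by integrating term by term: $\int_{\rn} e^{t\Delta} n_0\, dx = \int_{\rn} n_0\, dx$ since the heat kernel has unit mass, while $\int_{\rn} e^{(t-s)\Delta}\dv F(s)\, dx = 0$ for $F(s) = n(s)(\nabla c(s) + u(s)) \in L^1(\rn)$ (equivalently, $\widehat{\dv F(s)}(0) = 0$). A clean justification tests the formula against a cutoff $\varphi_R(x) = \varphi(x/R)$ with $\varphi \in C_c^\infty(\rn)$ equal to $1$ near the origin, and passes to $R \to \infty$: the error terms carry a factor $R^{-1}\|\nabla\varphi\|_\infty$ paired against an $L^1$-quantity and therefore vanish.

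For nonnegativity, I would regularize the data and invoke the maximum principle. Pick $c_0^k \in C_b^\infty(\rn)$ and $n_0^k, u_0^k \in C_c^\infty(\rn)$ with $c_0^k, n_0^k \ge 0$ and $u_0^k$ divergence-free, such that $[c_0^k, n_0^k, u_0^k] \to [c_0, n_0, u_0]$ in $\X_0$ and additionally $n_0^k \to n_0$ in $L^1(\rn)$. Theorem~\ref{thm:LWP} produces smooth solutions $(c^k, n^k, u^k)$ on a common time interval, while the contraction estimate delivers continuous dependence, hence $(c^k, n^k, u^k) \to (c, n, u)$ in $\X_{T^2}$ and $n^k \to n$ in $C([0,T); L^1)$. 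For fixed $k$, using $\dv u^k = 0$ the $n$-equation reads
\[
\partial_t n^k - \Delta n^k + (\nabla c^k + u^k)\cdot\nabla n^k + (\Delta c^k) n^k = 0;
\]
the coefficients are bounded on $[0, T^2]\times\rn$ by smoothness of the regularized data, so the classical parabolic maximum principle (equivalently, a Gronwall estimate on $\|n^k_-(t)\|_{L^2}^2$) yields $n^k \ge 0$. With $n^k \ge 0$, the first equation becomes a linear parabolic equation in $c^k$ with nonnegative zero-order coefficient, and the same principle gives $c^k \ge 0$. Passing to an a.e.\ convergent subsequence transfers both signs to $(c, n)$.

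The main technical obstacle lies in the first step: running the fixed-point scheme jointly in $\X_{2,T^2}$ and in $C([0,T); L^1)$, so that the iteration is a contraction in the augmented norm. The $s^{-1/2}$-singularity of $(\nabla c, u)$ is balanced by the $(t-s)^{-1/2}$-smoothing of $e^{(t-s)\Delta}\dv$, producing a finite Beta-type constant, but closing the scheme in the joint space may force a strict shrinking of the time window beyond the one guaranteed by Theorem~\ref{thm:LWP}. Once this calibration is performed, both the conservation identity and the maximum-principle arguments are essentially routine.
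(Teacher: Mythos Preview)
Your proposal is correct and follows the paper's structure for the $L^1$-part and mass conservation, but takes a genuinely different route for nonnegativity.

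For the $L^1$-control and the identity \eqref{mass conservation}, your argument is essentially the paper's: both run the fixed point for $n$ in the augmented space $C([0,T);L^1)\cap \X_{2,T^2}$, use the $(t-s)^{-1/2}$-smoothing of $e^{(t-s)\Delta}\dv$ against the $s^{-1/2}$-bounds on $\nabla c,u$ coming from the $\X$-norms, and then integrate the Duhamel formula using that $\int_{\rn}\dv H=0$ for $H,\dv H\in L^1$. Your cutoff justification and the paper's density-plus-divergence-theorem argument are equivalent. Note that the Beta integral $\int_0^t(t-s)^{-1/2}s^{-1/2}ds$ is scale-invariant, so the contraction constant is small not because $T$ is small per se, but because $\|c\|_{\X_{1,T^2}}+\|u\|_{\X_{3,T^2}}$ is already small by the construction of Theorem~\ref{thm:LWP}; you state this correctly.

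The difference is in the sign preservation. You regularize \emph{all} of $(c_0,n_0,u_0)$ simultaneously, apply the classical maximum principle to both the $n^k$- and $c^k$-equations (written in non-divergence form), and pass to the limit by continuous dependence in $\X_{T^2}$. The paper instead deduces $n\geq 0$ directly from the already-proved mass conservation combined with an $L^1$-contraction $\|n(t)\|_{L^1}\leq\|n_0\|_{L^1}$ obtained by multiplying \eqref{main-eq}$_2$ by $\mathrm{sgn}\,n$: then $\int n^-=\tfrac12(\int|n|-\int n)\leq\tfrac12(\int|n_0|-\int n_0)=\int n_0^-=0$. For $c\geq 0$ the paper regularizes only $c_0$ (keeping the already-constructed $n,u$ as coefficients) and shows $c_\varepsilon\to c$ in $L^\infty$ via a Gronwall-type bound. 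The paper's route for $n$ is slicker but, as it explicitly notes, requires extra technical assumptions ($u_0\in L^\infty$, $\nabla c_0\in L^\infty$) to justify the $L^1$-contraction step. Your approach is more uniform and avoids those extra hypotheses, at the cost of checking that smooth nonnegative approximants exist in the relevant topologies (this is fine here since $c_0\in\overline{UC}^{L^\infty}$ and the other components lie in ``vanishing'' subspaces) and that the existence time is uniform in $k$ (which follows from convergence of the data norms).
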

	Observe that the nonnegativity preservation for $n$ may directly be deduced from \eqref{mass conservation} and an $L^1$-contraction property pertaining to solutions of \eqref{main-eq}$_2$, see Section \ref{sec:4}.  
	\begin{theorem}[Global well-posedness]\label{thm:GWP}
		Let $N\geq 2$ and assume that $\Phi$ is as in Theorem \ref{thm:LWP}. There exists $\varepsilon>0$ such that for every 3-tuple  $[c_0,n_0,u_0]$ in $\X_0$ with $\nabla\cdot u_0=0$, if it holds that 
		\begin{equation}
			\big\|[c_0,n_0,u_0]\big\|_{\X_0}+\|\nabla\Phi\|_{M_{2,N-2}(\rn)}<\varepsilon,
		\end{equation} 
		then there exists a global solution $[c,n,u]\in \X$ of \eqref{main-eq} continuously depending on the initial data. This solution is unique in the closed ball \[B^{\X}_{C\varepsilon}:=\big\{[c,n,u]\in\X: \big\|[c,n,u]\big\|_{\X}\leq C\varepsilon\big\}\] for some constant $C>0$. Moreover, if $n_0\in L^1(\mathbb{R}^2)$ with sufficiently small norm, then there exists a global solution $(c,n,u)$ to Eqs. \ref{main-eq} with $n\in C([0,\infty);L^1(\mathbb{R}^2))\cap \X_2$ such that 
		\begin{equation}\label{mc-global}
			\int_{\mathbb{R}^2}n(x,t)dx=\int_{\mathbb{R}^2}n_0(x)dx.    
		\end{equation}
	\end{theorem}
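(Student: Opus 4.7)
The plan is to reformulate \eqref{main-eq} via Duhamel's principle as the mild system
\begin{align*}
c(t) &= e^{t\Delta}c_0 - \int_0^t e^{(t-s)\Delta}\bigl(cn + \dv(uc)\bigr)(s)\,ds,\\
n(t) &= e^{t\Delta}n_0 - \int_0^t e^{(t-s)\Delta}\dv\bigl(n\nabla c + un\bigr)(s)\,ds,\\
u(t) &= e^{t\Delta}u_0 - \int_0^t e^{(t-s)\Delta}\mathbb{P}\bigl[\dv(u\otimes u) + n\nabla\Phi\bigr](s)\,ds,
\end{align*}
where we exploited $\dv u=0$ to rewrite $u\cdot\nabla c$ and $u\cdot\nabla n$ in divergence form and $\mathbb{P}$ denotes the Leray projector. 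I then solve this system by a Picard-type fixed point argument in the closed ball $B^{\X}_{C\varepsilon}\subset \X$, choosing $C>1$ large and $\varepsilon>0$ small, following the standard bilinear map lemma (any map $B(x,y)$ bilinear and continuous from a Banach space to itself with norm $\eta$ sends the $\rho$-ball into itself whenever $\|y_0\|+\eta \rho^2\le\rho$, giving a unique small fixed point).

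The first main ingredient consists in showing that the linear heat evolutions are bounded between the prescribed pairs of spaces, namely
\[
\|e^{t\Delta}c_0\|_{\X_1}\lesssim \|c_0\|_{L^\infty},\quad \|e^{t\Delta}n_0\|_{\X_2}\lesssim \|n_0\|_{\Y},\quad \|e^{t\Delta}u_0\|_{\X_3}\lesssim \|u_0\|_{BMO^{-1}}.
\]
The first and third are classical (maximum principle for heat kernel, and Koch--Tataru \cite{KT} respectively), while the second follows tautologically from Definition \ref{def:M-1} once one checks that $t^{1/2}\|e^{t\Delta}n_0\|_{L^\infty}$ is controlled by $\|n_0\|_{\Y}$, e.g.\ by expressing $n_0=\sum_j\partial_j f_j$ with $f_j\in\mathscr{L}_{2,N-2}$, using the kernel bound $|\nabla e^{t\Delta}(x-y)|\lesssim t^{-(N+1)/2}e^{-|x-y|^2/ct}$, and a standard dyadic Campanato splitting.

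The second, and main, ingredient is the collection of multilinear estimates for each coupling term. I expect to prove
\begin{align*}
\Bigl\|\!\int_0^t\!e^{(t-s)\Delta}(cn)\,ds\Bigr\|_{\X_1}&\lesssim \|c\|_{\X_1}\|n\|_{\X_2},\;\;
\Bigl\|\!\int_0^t\!e^{(t-s)\Delta}\dv(uc)\,ds\Bigr\|_{\X_1}\lesssim \|u\|_{\X_3}\|c\|_{\X_1},\\
\Bigl\|\!\int_0^t\!e^{(t-s)\Delta}\dv(n\nabla c)\,ds\Bigr\|_{\X_2}&\lesssim [c]_{\X_1}\|n\|_{\X_2},\;\;
\Bigl\|\!\int_0^t\!e^{(t-s)\Delta}\dv(un)\,ds\Bigr\|_{\X_2}\lesssim \|u\|_{\X_3}\|n\|_{\X_2},\\
\Bigl\|\!\int_0^t\!e^{(t-s)\Delta}\mathbb{P}\dv(u\otimes u)\,ds\Bigr\|_{\X_3}&\lesssim \|u\|_{\X_3}^2,\;\;
\Bigl\|\!\int_0^t\!e^{(t-s)\Delta}\mathbb{P}(n\nabla\Phi)\,ds\Bigr\|_{\X_3}\lesssim \|n\|_{\X_2}\|\nabla\Phi\|_{M_{2,N-2}}.
\end{align*}
Each estimate is proved by splitting the Duhamel integral between $s\in(0,t/2)$ and $s\in(t/2,t)$; on the first piece one uses $L^\infty$ bounds with heat-kernel decay to handle the $\sup_t t^{\alpha}\|\cdot\|_{L^\infty}$ part of the norms, and on the second piece a localization to the parabolic cylinder $B(x,R)\times(0,R^2)$ together with the Carleson-measure characterizations of $BMO^{-1}$ and $\Y$ controls the square-integral part. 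The hard part is the chemotaxis term $\dv(n\nabla c)$, where $n$ has only a $\Y$-type control and $\nabla c$ only a Carleson control, forcing us to apply a Cauchy--Schwarz in $(y,s)$ on a parabolic cube followed by the localized Campanato bound for $n$ and the localized Carleson bound for $\nabla c$, precisely the strategy that motivated the choice of $\X_2$. The Navier--Stokes nonlinearity is exactly the estimate of \cite{KT}, and the forcing term uses the Morrey assumption on $\nabla\Phi$ through a straightforward parabolic Morrey-type bilinear bound.

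Combining the linear and multilinear estimates yields a contraction on $B^{\X}_{C\varepsilon}$ for $\varepsilon$ small, giving existence, uniqueness in the small ball, and Lipschitz dependence on the data. For the $N=2$ mass-conservation statement, I would approximate $[c_0,n_0,u_0]$ by smooth compactly supported data, use standard parabolic regularity to get classical solutions, integrate the second equation over $\mathbb{R}^2$ (the integrals of $\dv(n\nabla c)$ and of $u\cdot\nabla n=\dv(un)$ vanish by the divergence theorem and sufficient decay), yielding $\int n(\cdot,t)=\int n_0$; the limit is justified by the $L^1$-contraction property for the advection-diffusion equation with transport field in $\X_3\cap\X_1$, together with convergence in $\X_2$ obtained from continuity of the fixed-point map. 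The main obstacle throughout is the chemotaxis multilinear estimate in $\X_2$, since it simultaneously requires the Carleson-type information on both factors and no naive H\"older inequality suffices.
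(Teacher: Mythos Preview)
Your proposal is correct and follows essentially the same route as the paper: the global existence is obtained by combining the linear bounds (Lemma~\ref{lem:smooth-effect}) with the bilinear estimates (Lemma~\ref{lem:bilinear-est}) and a contraction argument in $\X$, exactly as you outline. Two small remarks: the weight in the $L^\infty$ part of $\X_2$ is $t$, not $t^{1/2}$, so the linear bound you need for $n_0$ is $\sup_t t\|e^{t\Delta}n_0\|_{L^\infty}\lesssim\|n_0\|_{\Y}$; and for the mass-conservation statement the paper takes a more direct path than your approximation scheme, running the fixed point in $C([0,\infty);L^1(\mathbb{R}^2))\cap\X_2$ (using $\|B_2(n,u+\nabla c)(t)\|_{L^1}\lesssim\|n\|_{L^1}(\|u\|_{\X_3}+\|c\|_{\X_1})$) and then applying Fubini together with $\int_{\mathbb{R}^2}\nabla\cdot H\,dx=0$ for $H,\nabla\cdot H\in L^1$, which avoids the limit-passage step you propose.
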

	A fine property of (local and global) solutions we constructed is the non-explosion of their $L^{\infty}$-norm  in finite time in the sense that for any $0 <T_0 < T$, we have
	\begin{equation*}
		\sup_{0\leq t<T}\|c(t)\|_{L^{\infty}(\rn)}+\sup_{T_0<t<T}(\|n(t)\|_{L^{\infty}(\rn)}+\|u(t)\|_{L^{\infty}(\rn)})<\infty.    
	\end{equation*}
	In fact the corresponding time decay rate of solutions is sharp since they match with that of the solution to the linear heat equation.
	The restriction $N=2$ in the second part of Theorem \ref{thm:GWP} naturally comes from the scaling invariance of $n$. 
	
	Our next result deals with the uniqueness of mild solutions constructed in Theorem \ref{thm:GWP}.
	\begin{theorem}\label{thm:uniqueness}
		Let $\Phi\in \mathcal{S}'(\rn)$ and $U_0=[c_0,n_0,u_0]\in \X_0$ such that $\nabla\cdot u_0=0$. Assume that $[c_1,n_1,u_1]$ and $[c_2,n_2,u_2]$ are two global mild solutions of \eqref{main-eq} in $L^{\infty}_{loc}((0,\infty);L^{\infty}(\rn))$ with initial data $U_0$. If it holds that 
		\begin{align}\label{limit-cond}
			\lim_{T\rightarrow 0}\big\|[c_1,n_1,u_1]\big\|_{\X_{T}}=0,\quad \lim_{T\rightarrow 0}\big\|[c_2,n_2,u_2]\big\|_{\X_{T}}=0,    
		\end{align}
		then $[c_1,n_1,u_1]=[c_2,n_2,u_2]$ on $\rn\times [0,\infty)$ provided $\nabla \Phi \in M_{2,N-2}(\rn)$ with sufficiently small norm.
	\end{theorem}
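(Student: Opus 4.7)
The strategy is to prove uniqueness on a short initial interval by a contraction estimate on the difference of the two solutions, and then propagate this to all of $[0,\infty)$ by an open--closed argument on the set where the solutions agree.

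To set up the contraction, I would introduce $[C,N,U] := [c_1 - c_2,\, n_1 - n_2,\, u_1 - u_2]$, which starts from zero initial data and satisfies the integral system obtained by subtracting the Duhamel formulas for each of the two solutions. Every coupling/nonlinear term splits bilinearly so that exactly one factor is a difference; for instance,
\[
c_1 n_1 - c_2 n_2 = C\,n_1 + c_2\,N, \qquad u_1\cdot \nabla c_1 - u_2\cdot \nabla c_2 = U\cdot \nabla c_1 + u_2\cdot \nabla C,
\]
\[
\dv(n_1 \nabla c_1) - \dv(n_2 \nabla c_2) = \dv(N\,\nabla c_1) + \dv(n_2\,\nabla C),
\]
with analogous decompositions for the Navier--Stokes convection $u\cdot\nabla u$ and the purely linear contribution $(n_1 - n_2)\nabla\Phi = N\,\nabla\Phi$. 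Feeding these into the same bilinear and trilinear estimates in $\X_{1,T}$, $\X_{2,T}$, and $\X_{3,T}$ that underlie the fixed-point construction in Theorem \ref{thm:GWP} (most notably the Koch--Tataru-type bound for the bilinear Oseen form, its chemotactic analogues, and the Morrey-space estimate for the forcing $N\nabla\Phi$), the difference ought to satisfy
\[
\bigl\|[C,N,U]\bigr\|_{\X_T} \leq K\,\bigl\|[C,N,U]\bigr\|_{\X_T}\Bigl(\bigl\|[c_1,n_1,u_1]\bigr\|_{\X_T} + \bigl\|[c_2,n_2,u_2]\bigr\|_{\X_T} + \|\nabla\Phi\|_{M_{2,N-2}(\rn)}\Bigr)
\]
for some absolute constant $K$. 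By \eqref{limit-cond} and the smallness of $\|\nabla\Phi\|_{M_{2,N-2}(\rn)}$, I would choose $T^\ast>0$ so that the parenthesised factor is strictly less than $1/K$ on $[0,T^\ast]$; this forces $[C,N,U]\equiv 0$ there.

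To extend the conclusion to $[0,\infty)$ I would run an open--closed argument on
\[
E := \bigl\{T \geq 0 : [c_1,n_1,u_1] \equiv [c_2,n_2,u_2] \text{ on } \rn\times[0,T]\bigr\}.
\]
Closedness follows from the $L^\infty_{\mathrm{loc}}((0,\infty);L^\infty(\rn))$ regularity. For openness at $t_0 := \sup E$, the common trace $[c(t_0),n(t_0),u(t_0)] \in L^\infty(\rn)^3$ determines, through the mild equations restarted at $t_0$, shifted solutions whose $\X_\tau$-seminorm pieces are $O(\sqrt{\tau})$ thanks to parabolic smoothing, enough to close the contraction again on $[t_0,t_0+\tau]$ for small $\tau$ and contradict the maximality of $t_0$. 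The main obstacle is precisely this propagation step: the hypothesis \eqref{limit-cond} is placed at the origin of time rather than at $t_0$, so one must verify carefully that an effective analogue of the smallness property can be recovered at each later time from the $L^\infty_{\mathrm{loc}}$ bound alone, keeping track of which pieces of the $\X_{j,\tau}$-norm carry the gain $\sqrt{\tau}$ and which are merely uniformly bounded, so that the multiplicative-Carleson and semigroup estimates still deliver a strict contraction after each restart.
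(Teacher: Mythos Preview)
Your short-time step is exactly the paper's: subtract the Duhamel formulas, split each nonlinearity so that one factor is a difference, apply the bilinear bounds of Lemma~\ref{lem:bilinear-est}, and use \eqref{limit-cond} together with the smallness of $\|\nabla\Phi\|_{M_{2,N-2}}$ to absorb. This yields $U_1=U_2$ on $[0,T_0)$.

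The propagation step, however, diverges from the paper and contains the gap you yourself flag. Re-running the $\X_\tau$ contraction after restarting at $t_0$ requires the coefficients $\|U_j(t_0+\cdot)\|_{\X_\tau}$ to be small. But $\|c_j\|_{\X_{1,\tau}}$ contains the undecorated piece $\sup_{0<t\leq\tau}\|c_j(t_0+t)\|_{L^\infty}$, which is of order $K_{12}$ and does \emph{not} shrink with $\tau$; this term multiplies $\|N\|_{\X_{2,\tau}}$ via the estimate for $B_1(c_2,N)$, so the contraction constant is not obviously below $1$. Parabolic smoothing gives you decay for the $n$- and $u$-pieces and for the gradient seminorms, but not for this $L^\infty$ part of $c$, and the hypothesis $U_j\in L^\infty_{loc}((0,\infty);L^\infty)$ gives no direct handle on $\nabla c_j$ either.

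The paper avoids this by \emph{dropping the $\X_T$ framework entirely} for $t>T_0$ and arguing instead in the spirit of Miura: set $a(t)=\sup_{T_0<s<t}\|U(s)\|_{L^\infty}$ and estimate $|(c_1-c_2)(x,t)|$, $|(n_1-n_2)(x,t)|$, $|(u_1-u_2)(x,t)|$ pointwise from the mild formulation, using only that the integrals run from $T_0$ (since $U_1=U_2$ on $[0,T_0]$) and the uniform bound $K_{12}$. The time integrals of $g$, $\nabla g$ and the Oseen kernel over $(T_0,t)$ produce factors $(t-T_0)$ or $(t-T_0)^{1/2}$, giving
\[
a(T_0+\tau)\;\leq\; C\,K_{12}\,\bigl(\tau+\tau^{1/2}\bigr)\,a(T_0+\tau),
\]
hence $a(T_0+\tau)=0$ for a fixed $\tau=\tau(K_{12})>0$; iteration finishes the proof. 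The smallness here comes from the length of the step, not from any smallness of the solution norms, which is precisely what your $\X_\tau$ restart cannot deliver.
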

	Moving on, we study a more general model known as the double chemotaxis system    
	\begin{align}\label{Sd-eq}\tag{D-CNS}  
		\begin{cases}
			\partial_tc-\Delta c+c n+u\cdot\nabla c=0&\mbox{in}\hspace{0.2cm}\rn\times \mathbb{R}_+\\
			\partial_tn-\Delta n+u\cdot \nabla n+\dv (n\nabla c)+\dv (n\nabla v)=0&\mbox{in}\hspace{0.2cm}\rn\times \mathbb{R}_+\\
			\partial_tv-\Delta v+u\cdot \nabla v+\kappa v-n=0&\mbox{in}\hspace{0.2cm}\rn\times \mathbb{R}_+\\
			\partial_tu-\Delta u+u\cdot \nabla u+\nabla p+n\Psi=0&\mbox{in}\hspace{0.2cm}\rn\times \mathbb{R}_+\\
			\dv u=0&\mbox{in}\hspace{0.2cm}\rn\times \mathbb{R}_+\\
			c(0)=c_0,n(0)=n_0, v(0)=v_0, u(0)=u_0&\mbox{in}\hspace{0.2cm}\rn
		\end{cases}  
	\end{align}
	where $c,n,u,p$ have the same meaning as before, $v$ is the concentration of chemical attractant,   $\kappa\geq 0$ represents the decay rate of the attractant and $\Psi$ is an external force acting on the fluid. For $c=0$ and $u=0$, \eqref{Sd-eq} reduces to the classical parabolic-parabolic Keller-Segel system of chemotaxis \cite{Ke-Se}
	\begin{align}\label{Keller-Seg}  
		\begin{cases}
			\partial_tn-\Delta n=-\dv (n\nabla v)\\
			\partial_tv-\Delta v=n-\kappa v
		\end{cases}  
		\mbox{in}\hspace{0.2cm}\Omega\times (0,\infty).
	\end{align}
	This equation has been widely studied in the literature, see for instance \cite{BBTW,CC,FPr,Fe-Ve,Horstmann,NSY} just to cite a few for well-posedness and blow-up results in bounded and unbounded domains. It is not clear whether the total mass of the initial density $\int_{\Omega}n_0dy$, as in the elliptic-parabolic case \cite{BCM}, plays a critical role.   Comparing  \eqref{Sd-eq} to \eqref{main-eq}, one sees that the new equation with unknown $v$ has no proper scaling if $\kappa \neq 0$. However, we can take advantage of our earlier analysis and the scaling property inherited from the case $\kappa=0$, i.e. $v_{\delta}(x,t)=v(\delta^2t,\delta x)$ to make the choice of $v_0$ in $BMO(\rn)$. In fact, due to the sign of $\kappa$ the semigroup associated to the operator $L_{k}:=-\Delta+\kappa$ shares similar properties with the heat semigroup. In order to state our results in this case we consider for $0<T\leq \infty$, the function space  \[\Z_{T}=\big\{[c,n,v,u]: c\in \X_{1,T},\hspace{0.1cm} n\in \X_{2,T},\hspace{0.1cm} v\in \X_{1,T}, \hspace{0.1cm}u\in \X_{3,T}\big\}\]
	equipped with the norm \[\big\|[c,n,v,u]\big\|_{\Z_T}=\|c\|_{\X_{1,T}}+\|n_0\|_{\X_{2,T}}+\|v\|_{\X_{1,T}}+\|u\|_{\X_{3,T}}.\]
	The local and global well-posedness result pertaining to \eqref{Sd-eq} is given in the next theorems.
	\begin{theorem}[Local well-posedness]\label{thm:LWP1}Assume $\Psi\in M_{2,N-2}(\rn)$, $N\geq 2$ and $d_0\in UC(\rn)$.  For all $[c_0,n_0,v_0,u_0]\in\overline{UC(\rn)}^{L^{\infty}(\rn)}\times \overline{V\mathscr{L}_{2,N-2}^{-1}}(\rn)\times VMO(\rn)\times\overline{VMO^{-1}}(\rn)$ with $\nabla\cdot u_0=0 $, there exist $\delta_0>0$, $T:=T(\delta_0)>0$ and a unique local solution of Eqs. \ref{Sd-eq} such that $[c,n,v-\widetilde{v_{0\kappa}},u]\in (\varGamma_{\delta_0}+\X_{1,T^2})\times \X_{2,T^2}\times \X_{1,T^2}\times \X_{3,T^2}$ provided $\Psi$ is small enough in $M_{2,N-2}(\rn)$. For $\kappa>0$, $\widetilde{v_{0\kappa}}$ denotes the $L_{\kappa}$-caloric extension of $v_0$.  
	\end{theorem}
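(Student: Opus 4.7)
The plan is to solve \eqref{Sd-eq} via a Banach fixed point argument on a closed ball of
\[
\bigl(\varGamma_{\delta_0}+\X_{1,T^2}\bigr)\times\X_{2,T^2}\times\bigl(\widetilde{v_{0\kappa}}+\X_{1,T^2}\bigr)\times\X_{3,T^2},
\]
paralleling the strategy behind Theorem \ref{thm:LWP} and incorporating the new $v$-equation through the semigroup $S_\kappa(t):=e^{-tL_\kappa}=e^{-\kappa t}e^{t\Delta}$. Because $0<e^{-\kappa t}\le 1$ for $\kappa\ge 0$, every heat-semigroup mapping property on the Morrey, Campanato, $BMO$ and $\Y$ scales carries over to $S_\kappa$ with a constant independent of $\kappa$, so the zero-order term $\kappa v$ introduces no fresh analytic obstruction. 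Decomposing $c=\varGamma_{\delta_0}+\bar c$ and $v=\widetilde{v_{0\kappa}}+\bar v$, the system becomes a Duhamel fixed-point equation for $[\bar c,n,\bar v,u]\in\X_{1,T^2}\times\X_{2,T^2}\times\X_{1,T^2}\times\X_{3,T^2}$.

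The linear contributions are controlled by the data assumptions exactly as in Theorem \ref{thm:LWP}: one chooses $d_0\in UC(\rn)$ with $\|c_0-d_0\|_{L^{\infty}}\le \delta_0/2$ and sets $\varGamma_{\delta_0}=e^{\delta_0^2\Delta}d_0$ so that the residual is small, while the hypotheses $u_0\in\overline{VMO^{-1}}(\rn)$ and $n_0\in\overline{V\mathscr{L}_{2,N-2}^{-1}}(\rn)$ force $\|e^{t\Delta}u_0\|_{\X_{3,T^2}}$ and $\|e^{t\Delta}n_0\|_{\X_{2,T^2}}$ to vanish as $T\to 0$. The new ingredient is the assumption $v_0\in VMO(\rn)$, which characterises the $BMO$ data whose caloric gradient Carleson norm vanishes at small scales and thus yields $[\widetilde{v_{0\kappa}}]_{\X_{1,T^2}}\to 0$ as $T\to 0$.

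The nonlinear analysis reduces to multilinear estimates of Duhamel integrals. All nonlinearities inherited from \eqref{main-eq}, namely $cn$, $u\cdot\nabla c$, $\dv(n\nabla c)$, $u\cdot\nabla n$, $u\cdot\nabla u$ and $n\Psi$, are handled verbatim as in Theorem \ref{thm:LWP}. The genuinely new couplings are $\dv(n\nabla v)$ in the $n$-equation and the pair $n-u\cdot\nabla v$ in the $v$-equation. Since $v\in\X_{1,T^2}$ occupies the same functional role as $c$, the bilinear bound for $\dv(n\nabla v)$ mimics the one for $\dv(n\nabla c)$, and $u\cdot\nabla v=\dv(uv)$ (using $\nabla\cdot u=0$) is treated like $u\cdot\nabla c$. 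The source $n$ entering the $v$-equation is dealt with by proving that $n\mapsto \int_0^tS_\kappa(t-s)n(s)\,ds$ maps $\X_{2,T^2}$ into $\X_{1,T^2}$: the $L^\infty$-bound at time $t$ is extracted from the Carleson-measure part of the $\X_2$-norm via the heat-kernel representation, and the gradient Carleson bound on the output follows from maximal regularity of $S_\kappa$ on the Campanato scale.

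Combining these pieces, the Duhamel map leaves invariant a closed ball of $\Z_{T^2}$ of radius proportional to $\delta_0+\|\Psi\|_{M_{2,N-2}(\rn)}+o_{T\to 0}(1)$ and is a contraction there once $\delta_0$, $\|\Psi\|_{M_{2,N-2}(\rn)}$ and $T$ are small enough, which yields existence and uniqueness in the ball. The main obstacle, as I see it, is the careful derivation of the equation for $\bar v$, which inherits a residual forcing $\kappa\widetilde{v_{0\kappa}}$ together with the transport terms $u\cdot\nabla\widetilde{v_{0\kappa}}$ and $u\cdot\nabla\bar v$; verifying that the Duhamel operator applied to these terms returns an element of $\X_{1,T^2}$ uniformly in $\kappa\ge 0$ is the only point that genuinely exceeds a mechanical adaptation of the proof of Theorem \ref{thm:LWP}.
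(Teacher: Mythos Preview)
Your proposal follows essentially the same route as the paper: decompose $c=\varGamma_{\delta_0}+\bar c$ and $v=\widetilde{v_{0\kappa}}+\bar v$, recast the system in Duhamel form, invoke the bilinear estimates of Lemma~\ref{lem:bilinear-est} together with the new continuity statements $\mathscr{L}:\X_{2,T}\to\X_{1,T}$ and $B_4:\X_{3,T}\times\X_{1,T}\to\X_{1,T}$ of Lemma~\ref{lem:DCNS}, and close by a contraction on a small ball as in Proposition~\ref{prop:self-cont}. Your identification of the key ingredients (the $VMO$ hypothesis on $v_0$ to make $[\widetilde{v_{0\kappa}}]_{\X_{1,T^2}}$ small, and the linear map $n\mapsto\int_0^tS_\kappa(t-s)n(s)\,ds$ as the only genuinely new estimate) is correct.

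One point deserves correction: what you flag as ``the main obstacle'', namely a residual forcing $\kappa\widetilde{v_{0\kappa}}$ in the $\bar v$-equation, does not in fact arise. Since $\widetilde{v_{0\kappa}}=e^{-\kappa t}e^{t\Delta}v_0$ solves the homogeneous equation $\partial_t\widetilde{v_{0\kappa}}-\Delta\widetilde{v_{0\kappa}}+\kappa\widetilde{v_{0\kappa}}=0$, subtracting it from the $v$-equation leaves
\[
\partial_t\bar v-\Delta\bar v+\kappa\bar v = n - u\cdot\nabla(\bar v+\widetilde{v_{0\kappa}}),\qquad \bar v(0)=0,
\]
with the zero-order term $\kappa\bar v$ absorbed into the semigroup $S_\kappa$. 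Hence the only terms to control are $B_4(u,\bar v+\widetilde{v_{0\kappa}})$ and $\mathscr{L}n$, exactly as in the paper's system~\eqref{Duhamel-DCNS1}. Your perceived obstacle is therefore vacuous, and the proof is, as you suspected elsewhere, a mechanical adaptation of Theorem~\ref{thm:LWP}.
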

	
	Next, a companion theorem shows that the maximal existence time in Theorem \ref{thm:LWP1} can be taken infinite, $T=\infty$ if one allows the data to be sufficiently small in the appropriate space. To state this result, we will need a new framework $\Z_0$ defined as \[\Z_0=\big\{[c_0,n_0,v_0,u_0]: c_0\in L^{\infty}(\rn),\hspace{0.051cm} n_0\in \Y(\rn),\hspace{0.051cm} v_0\in BMO(\rn),\hspace{0.1cm} u_0\in BMO^{-1}(\rn)\big\}\]
	and equipped with the norm 
	\[\big\|[c_0,n_0,v_0,u_0]\big\|_{\Z_0}=\|c_0\|_{L^{\infty}(\rn)}+\|n_0\|_{\Y(\rn)}+\|v_0\|_{BMO(\rn)}+\|u_0\|_{BMO^{-1}(\rn)}.\]
	
	\begin{theorem}\label{thm:GWP2} Assume $N\geq 2$. Eqs. \ref{Sd-eq} is globally well-posed.  There exist $\varepsilon>0$ and $\vartheta:=\vartheta(\varepsilon)>0$ with the following property. For any  $[c_0,n_0,v_0,u_0]\in \Z_0$ with $\nabla\cdot u_0=0$ and  $\Psi\in M_{2,N-2}(\rn)$ satisfying 
		$\big\|[c_0,n_0,v_0,u_0]\big\|_{\Z_0}+ \|\Psi\|_{M_{2,N-2}(\rn)} <\varepsilon$,
		there exists a mild solution $[c,n,v,u]$ of Eqs. \ref{Sd-eq}. This solution is unique in the set
		\[B^{\Z}_{2\vartheta}:=\big\{[c,n,v,u]\in\Z: \big\|[c,n,v,u]-[0,0,\widetilde{v}_{\kappa},0]\big\|_{\Z}\leq 2\vartheta\big\}.\] Moreover, the following uniqueness criterion holds: Let $[c_1,n_1,v_1,u_1]$, $[c_2,n_2,v_2,u_2]$ be two global mild solutions of Eqs. \ref{Sd-eq} in $L^{\infty}_{loc}((0,\infty);L^{\infty}(\rn))$ with the same initial data. If the  condition  
		\begin{align}\label{limit-cond-1}
			\lim_{T\rightarrow 0}\big\|[c_1,n_1,v_1,u_1]\big\|_{\Z_{T}}=0,\quad \lim_{T\rightarrow 0}\big\|[c_2,n_2,v_2,u_2]\big\|_{\Z_{T}}=0
		\end{align}
		is satisfied, then $[c_1,n_1,v_1,u_1]=[c_2,n_2,v_2,u_2]$ on $\rn\times [0,\infty)$.
	\end{theorem}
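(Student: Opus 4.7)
The plan is to recast \eqref{Sd-eq} as a Duhamel fixed-point problem on $\Z$, paralleling the scheme underlying Theorem \ref{thm:GWP}. Because $L_\kappa=-\Delta+\kappa$ generates the semigroup $e^{-tL_\kappa}=e^{-\kappa t}e^{t\Delta}$, every heat-kernel/Carleson-type bound used for $\kappa=0$ survives verbatim with the harmless factor $e^{-\kappa t}\le 1$. Setting $\widetilde{v}_\kappa(\cdot,t)=e^{-tL_\kappa}v_0$, the standard $BMO$-heat extension gives $\|\widetilde{v}_\kappa\|_{\X_1}\lesssim\|v_0\|_{BMO(\rn)}$, so the shifted quadruple $[c,n,v-\widetilde{v}_\kappa,u]$ carries zero initial data in its third slot. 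I would then define a map $\mathcal{T}$ on $B^{\Z}_{2\vartheta}$ by
\begin{align*}
\mathcal{T}_1(c,n,v,u) &= e^{t\Delta}c_0-\int_0^t e^{(t-s)\Delta}\bigl[cn+u\cdot\nabla c\bigr](s)\,ds,\\
\mathcal{T}_2(c,n,v,u) &= e^{t\Delta}n_0-\int_0^t e^{(t-s)\Delta}\dv\bigl[n\nabla c+n\nabla v+nu\bigr](s)\,ds,\\
\mathcal{T}_3(c,n,v,u) &= \widetilde{v}_\kappa+\int_0^t e^{-(t-s)L_\kappa}\bigl[n-u\cdot\nabla v\bigr](s)\,ds,\\
\mathcal{T}_4(c,n,v,u) &= e^{t\Delta}u_0-\int_0^t e^{(t-s)\Delta}\mathbb{P}\bigl[\dv(u\otimes u)+n\Psi\bigr](s)\,ds,
\end{align*}
with $\mathbb{P}$ the Leray projector, and apply the Banach fixed-point theorem.

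The multilinear estimates required split into two groups. The first group, already established in the proof of Theorem \ref{thm:GWP}, controls the couplings $cn$, $u\cdot\nabla c$, $\dv(n\nabla c)$, $\dv(nu)$, $\dv(u\otimes u)$ and the forcing $n\Psi$ in the corresponding $\X_j$-topologies, using the linear semigroup bounds $e^{t\Delta}\colon L^\infty\to\X_1$, $\Y\to\X_2$, $BMO^{-1}\to\X_3$. The second group concerns the new $v$-block: $\dv(n\nabla v)$ and $u\cdot\nabla v$ are estimated exactly as their $c$-analogues since $v-\widetilde{v}_\kappa$ and $c$ live in the same space $\X_1$, while the contributions involving $\nabla\widetilde{v}_\kappa$ are known forcings dominated by $\|v_0\|_{BMO}$. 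The linear term $\int_0^t e^{-(t-s)L_\kappa}n(s)\,ds$ is placed in $\X_1$ via the Carleson-measure description of $\Y$ from the lemma following Definition \ref{def:M-1}, combined with pointwise heat-kernel decay and the extra factor $e^{-\kappa(t-s)}$. Choosing $\vartheta\approx C\varepsilon$ with $C$ the combined multilinear constant, a routine computation yields that $\mathcal{T}$ is a $\tfrac12$-contraction on $B^{\Z}_{2\vartheta}$, producing the desired unique mild solution.

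For the uniqueness criterion, consider two global mild solutions $U_i=[c_i,n_i,v_i,u_i]$ with common data and satisfying \eqref{limit-cond-1}. Pick $T_\ast>0$ small enough that both $\|U_i\|_{\Z_{T_\ast}}$ lie below the threshold of the multilinear estimates. Subtracting the Duhamel representations term by term, the initial-data contributions cancel and the same bilinear bounds give
\[
\|U_1-U_2\|_{\Z_{T_\ast}}\le C\bigl(\|U_1\|_{\Z_{T_\ast}}+\|U_2\|_{\Z_{T_\ast}}\bigr)\,\|U_1-U_2\|_{\Z_{T_\ast}},
\]
forcing $U_1\equiv U_2$ on $[0,T_\ast]$. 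A standard continuation argument, valid because both solutions belong to $L^\infty_{loc}((0,\infty);L^\infty(\rn))$, restarts the contraction from $T_\ast$ and propagates the equality to all of $[0,\infty)$.

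The principal analytic obstacle is the linear source in the $v$-equation: $n\in\X_2$ satisfies only $\|n(s)\|_{L^\infty}\lesssim s^{-1}$ together with a space-time Carleson bound, and without a divergence structure in front one cannot exploit the $(t-s)^{-1/2}$-gain of $\nabla e^{(t-s)\Delta}$. Bounding $\int_0^t e^{-(t-s)L_\kappa}n(s)\,ds$ inside $\X_1$ therefore requires simultaneously using the gradient-Carleson component of the $\X_1$-norm and the divergence-of-Campanato representation of $n$ provided by the lemma after Definition \ref{def:M-1}. Once this step is secured, the remainder is a systematic repackaging of the arguments that yield Theorem \ref{thm:GWP}, with the extra bookkeeping induced by having to carry the shift $\widetilde{v}_\kappa$ through every bilinear estimate involving $v$.
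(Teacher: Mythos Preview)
Your overall architecture—Duhamel fixed point on $\Z$ around the shifted profile $\widetilde{v}_\kappa$, followed by a small-time contraction plus $L^\infty$-continuation for the uniqueness criterion—is exactly the paper's strategy. The new ingredients are packaged in the paper as Lemma~\ref{lem:DCNS}, supplying precisely the two bounds you single out: $\mathscr{L}:\X_{2,T}\to\X_{1,T}$ and $B_4:\X_{3,T}\times\X_{1,T}\to\X_{1,T}$.

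Where your proposal goes astray is in the mechanism for the bound on $\mathscr{L}n=\int_0^t e^{-(t-s)L_\kappa}n(s)\,ds$. You propose to invoke ``the divergence-of-Campanato representation of $n$'' from the lemma following Definition~\ref{def:M-1}. That lemma characterizes the \emph{initial data} space $\Y(\rn)$: it says $n_0=\sum_j\partial_j f_j$ with $f_j\in\mathscr{L}_{2,N-2}$, but gives no divergence structure for the evolving $n(\cdot,s)\in\X_2$ at positive times, so it cannot be used to manufacture the missing $(t-s)^{-1/2}$ gain. The paper instead proves $\|\mathscr{L}n\|_{\X_1}\le C\|n\|_{\X_2}$ directly from the two pieces of the $\X_2$-norm. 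For the $L^\infty$ and weighted-gradient parts one splits the time integral at $t/2$: on $[t/2,t]$ use $s\|n(s)\|_{L^\infty}\le\|n\|_{\X_2}$; on $[0,t/2]$ use a spatial near/far decomposition together with the Carleson component of $\|n\|_{\X_2}$. For the gradient-Carleson part of $\X_1$ one applies Young's inequality in space and the fractional-integral Lemma~\ref{lem:bound-E} (with $\alpha=\beta=\tfrac12$) in time. No divergence form is needed, and $e^{-\kappa(t-s)}\le1$ is simply discarded.

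A second, smaller point: $\mathcal{T}$ is \emph{not} literally a $\tfrac12$-contraction, because the $v$-difference carries the linear term $\mathscr{L}(n-n')$ with coefficient $C_4$ independent of $\varepsilon$; Proposition~\ref{prop:self-cont} records this explicitly. The resolution is that the $n$-equation has purely bilinear right-hand side, so $\|n-n'\|_{\X_2}$ is already $O(\varepsilon)\|U-U'\|_\Z$; substituting this back (equivalently, putting a large weight on the $\X_2$-slot, or showing $\mathcal{T}^2$ contracts) closes the argument. The same substitution is needed in your uniqueness inequality, which as written also omits the $\|\Psi\|_{M_{2,N-2}}$ contribution coming from $\mathscr{L}_\Psi(n_1-n_2)$ in the $u$-difference.
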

	From the two previous theorems, we deduce that the Keller-Segel system \eqref{Keller-Seg} is locally well-posed for initial data in $\overline{V\mathscr{L}_{2,N-2}^{-1}}(\rn)\times VMO(\rn)$ and globally well-posed whenever $[n_0,v_0]\in \mathscr{L}_{2,N-2}^{-1}(\rn)\times BMO(\rn)$ with $\|n_0\|_{\mathscr{L}_{2,N-2}^{-1}(\rn)}+\|u_0\|_{BMO(\rn)}$ sufficiently small. Let us mention that (local) solutions of  \eqref{Sd-eq} also enjoy the mass conservation property. 
	
	Before further comments on our main results, we would like to discuss their optimality.
	\subsection*{Optimality of main results} That $\X_{3}$ and thus $BMO^{-1}(\rn)$ are the largest critical spaces for mild solutions and the data relatively to the Navier-Stokes equations have already been justified in \cite{ADT,CG}. A detailed discussion about critical spaces can be found in \cite{L}. In the sequel we show that the setting where the equations \eqref{main-eq}$_1$-\eqref{main-eq}$_2$ are solved is also  end-point. Let us denote by $E^i$ and $E^i_0\subset \mathcal{S}'(\rn)$, $i=1,2$ Banach function spaces defined on $\rn\times (0,\infty)$ and $\rn$ which are critical for \eqref{main-eq}$_1$ and \eqref{main-eq}$_2$, respectively so that
	\begin{align}
		\label{scc}\|c_0\|_{E_0^1}=\|c_0(\delta \cdot-z)\|_{E_0^1},\quad \|c\|_{E^1}=\|c(\delta \cdot-z,\delta^2\cdot)\|_{E^1}\\
		\label{scn}\|n_0\|_{E_0^2}=\|\delta^2n_0(\delta \cdot-z)\|_{E_0^2},\quad \|n\|_{E^2}=\|\delta^2 n(\delta \cdot-z,\delta^2\cdot)\|_{E^2}     
	\end{align}
	for all $\delta>0$ and $z\in \rn$. To give a sense to the nonlinear term in \eqref{main-eq}$_2$ (and thus the bilinear form $B_2$, for instance in a distributional sense) we need at least that $n,\nabla c\in L^2_{loc}(\rn\times (0,\infty))$ since $u$ already satisfies this condition. The first requirement translates into the continuous embedding $E^2\subset L^2_{loc}(\rn\times (0,\infty))$ so that there exists $M>0$ with
	\begin{equation*}
		\bigg(\int_{0}^1\int_{B_1}|n|^2dyds\bigg)^{1/2}\leq M\|n\|_{E^2}    
	\end{equation*}
	for all $n\in E^2$ where $B_1\subset \rn$ is the unit ball while the second implies that
	\begin{equation*}
		\bigg(\int_{0}^1\int_{B_1}|\nabla c|^2dyds\bigg)^{1/2}\leq M\|c\|_{E^1}    
	\end{equation*}
	for all $c\in E^1$. The second identity in \eqref{scn} (resp. \eqref{scc}) clearly implies that
	\begin{equation}\label{Carlnorm-n}
		\sup_{x\in \rn,r>0}\bigg(|B_r(x)|^{\frac{2-N}{N}}\int_{0}^{r^2}\int_{B_r(x)}|n(y,s)|^2dyds\bigg)^{1/2}\leq M\|n\|_{E^2}\hspace{0.2cm}\mbox{for all}\hspace{0.2cm}n\in E^2    
	\end{equation}
	and (resp.)
	\begin{equation}\label{Carlnorm-c}
		\sup_{x\in \rn,r>0}\bigg(|B_r(x)|^{-1}\int_{0}^{r^2}\int_{B_r(x)}|\nabla c(y,s)|^2dyds\bigg)^{1/2}\leq M\|c\|_{E^1}\hspace{0.2cm}\mbox{for all}\hspace{0.2cm}c\in E^1.    
	\end{equation}
	Now, given an initial data $n_0\in E_0^2$, we look for a solution $n\in E^2$ and observe that under the conditions
	\begin{equation}\label{smoothing-E^2}
		\|Sn_0\|_{E^2}\leq M\|n_0\|_{E^2_0}     \hspace{0.2cm}\mbox{for all}\hspace{0.2cm}n_0\in E^2_0
	\end{equation}
	and the continuity of the bilinear form $B_2$ (see Section \ref{sec:3} below for its definition) on $E^2\times \X_3$, a Banach fixed point argument can be applied to produce such a unique global mild solution provided $n_0$ is small in the norm of $E^2_0$. Given \eqref{smoothing-E^2}, one easily deduces from  \eqref{Carlnorm-n} the estimate   
	\begin{equation}\label{Carl-Sn0}
		\sup_{x\in \rn,r>0}\bigg(|B_r(x)|^{\frac{2-N}{N}}\int_{0}^{r^2}\int_{B_r(x)}|Sn_0|^2dyds\bigg)^{1/2}\leq M\|n_0\|_{E^2_0}
	\end{equation}
	for each $n_0\in E^2_0$. The finiteness of the expression on the left hand side in \eqref{Carl-Sn0} is equivalent to $n_0\in \Y(\rn)$ by Definition \ref{def:M-1}. Hence, $E^2_0\subset \Y(\rn)$ and  $E^2$ is continuously embedded in the space of functions such that the left hand side in \eqref{Carlnorm-n} is finite. Moving on, the  extra condition allowing one to make sense of the first nonlinear term in \eqref{main-eq}$_1$ is the local boundedness of $c$. Because of scaling invariance, we see that $E^1\subset L^{\infty}(\rn\times (0,\infty))$ must hold true. Combining this with \eqref{Carlnorm-c}, we find that
	\begin{equation}\label{Carlinftynorm-c}
		\sup_{x\in \rn,r>0}\bigg(|B_r(x)|^{-1}\int_{0}^{r^2}\int_{B_r(x)}|\nabla c|^2dyds\bigg)^{1/2}+\|c\|_{L^{\infty}(\rn\times (0,\infty))}\leq M\|c\|_{E^1}\hspace{0.2cm}\mbox{for all}\hspace{0.2cm}c\in E^1.    
	\end{equation}
	Likewise, the existence of a unique $c$ relies on two properties: the continuity of the bilinear operator $B_1$ on $E^1\times E^2$, $B_2$ on $E^1\times \X_3$ and the validity of the estimate $\|Sc_0\|_{E^1}\leq M\|c_0\|_{E^1_0}$
	for all $c_0\in E^1_0$. The latter and \eqref{Carlinftynorm-c} lead to
	\begin{equation}\label{Carlinftynorm-c0}
		\sup_{x\in \rn,r>0}\bigg(|B_r(x)|^{-1}\int_{0}^{r^2}\int_{B_r(x)}|\nabla Sc_0|^2dyds\bigg)^{1/2}+\|Sc_0\|_{L^{\infty}(\rn\times (0,\infty))}\leq M\|c_0\|_{E^1_0}    
	\end{equation}
	for any $c_0\in E^1_0$. The finiteness of the first term on the left hand side of the above inequality is equivalent to $c_0\in BMO(\rn)$. However, this will violate the finiteness of the second term while both terms are finite if $c_0\in L^{\infty}(\rn)$. On the other hand, it is clear that $E^1$ is continuously embedded in the space of functions $c$ for which the semi-norm of their gradient as in \eqref{Carlinftynorm-c} is finite.  A close reasoning shows that the results pertaining to \eqref{Sd-eq} are also sharp.
	
	\begin{remark}\label{opti}
		As a direct consequence of Theorem \ref{thm:LWP}, one deduces the existence of global in time solutions for initial concentrations  which are small $L^{\infty}$-perturbations of constants in $\mathbb{R}$. Also, the global solutions constructed depend continuously on the initial data. This is a simple consequence of the argument used in the proofs which mainly relies on the contraction mapping principle. Whether or not the smallness assumption on the force in the above stated theorems can be removed is an interesting question but which is not addressed in this paper.   
	\end{remark}
	
	\begin{remark}\label{lem:embedding-BM}Comparing our main results with earlier findings, one merely requires the initial data $c_0$ to belong to $L^{\infty}(\rn)$ together with a suitable smallness condition and no assumption on its gradient is needed.
		Moreover, for $2\leq p<\infty$, $N>2$ and $0\leq \lambda<N$, due to the embeddings (see the Appendix for the proofs)
		\begin{align}\label{embedding-BM-Y_Besov}
			\mathcal{N}^{-2s}_{p,\lambda,\infty}(\rn)\subset \Y(\rn)\subset \dot{B}^{-2}_{\infty,\infty}(\rn);\hspace{0.2cm}s=1+\dfrac{\lambda-N}{2p},\hspace{0.1cm}(N-\lambda)/2\leq p<N-\lambda    
		\end{align}
		\begin{align}\label{embedding-BM-BMO-1}
			\mathcal{N}^{-s}_{p,\lambda,\infty}(\rn)\subset BMO^{-1}(\rn);\quad s=1-\frac{N-\lambda}{p},\hspace{0.2cm}p>N-\lambda
		\end{align}
		our initial data class in Theorem \ref{thm:GWP} is larger than those considered in \cite{KMS,YFS}. Likewise, in Theorem \ref{thm:GWP2}, the initial concentration of chemical attractant is taken in $BMO(\rn)$ and no extra requirement on its first order gradient is necessary unlike in the articles \cite{FP,KY}. In fact, their initial data classes are contained in ours when the dimension is larger or equal to $3$. This plainly shows that our global existence results encompasses all those which have been cited before. In 2D, however, the initial concentration $n_0$ is taken in a smaller class $\dot{B}^{-1}_{22}(\mathbb{R}^2)$ but gives rise to a much natural functional setting. It is worth pointing out that our local well-posedness results (Theorems \ref{thm:LWP} and \ref{thm:LWP1}) are derived under much weaker regularity assumptions as compared to those obtained for instance in \cite{CKL,Zha} and related works therein. Finally, because $M_{2,N-2}(\rn)\subset M_{p,N-p}(\rn)$ for $1\leq p<2$, our well-posedness theorems are not optimal with respect to the external force, see \cite{FP,YFS}.    
	\end{remark}
	\begin{remark}[Self-similar solutions]
		From the embedding $L^{N/2,\infty}(\rn)\subset \Y(\rn)$, one sees that $\Y(\rn)$ contains homogeneous distributions of degree $-2$. Thus, if $[c_0,n_0,u_0]$ is homogeneous of degree $0$, $-2$ and $-1$, respectively and $\big\|[c_0,n_0,u_0]\big\|_{\X_0}$ is sufficiently small, then as a by-product of Theorem \ref{thm:GWP}, there exits a unique (forward self-similar) solution $[c,n,u]$ satisfying
		\begin{equation}\label{self-similar}
			n(x,t)=\delta^2n(\delta x,\delta ^2t), \hspace{0.12cm}c(x,t)=c(\delta  x,\delta ^2t),\hspace{0.12cm}u(x,t)=\delta  u(\delta  x,\delta ^2t)\hspace{0.52cm}\mbox{for all} \hspace{0.21cm}\delta >0.   
		\end{equation}
		provided $\Phi\in \mathcal{S}'(\rn)$, $\nabla \Phi\in M_{2,N-2}(\rn)$ is homogeneous of degree $-1$ with $\|\nabla\Phi\|_{M_{2,N-2}(\rn)}$ small enough. 
		A similar conclusion persists if in the double chemotaxis Navier-Stokes equation \eqref{Sd-eq}, one takes $\kappa=0$, $v_0$ and $\Psi$ homogeneous of degree $0$ and $-1$, respectively. 
	\end{remark}
To end this section, we point out potential applications of our results.
	\begin{itemize}
		\item (Regularity of mild solutions)  A natural question to ask is: what is the space-time regularity of mild solutions to \eqref{main-eq} and \eqref{Sd-eq} arising from small initial data in $\X_0$ and $\Z_0$ respectively? A naive approach to this question will be to perform a similar type of analysis in the analogue of our solution spaces but this time for higher order derivatives of arbitrary order. Similar ideas involving space derivatives appeared earlier in \cite{GPS} where the authors proved that global mild solutions of the Navier-Stokes equations subject to small $BMO^{-1}$ initial data  are space-analytic.   
		\item (Stability) Given the optimal nature of our results, one may ask whether those solutions are stable. Let $(c_0,n_0,u_0)$ be an initial data giving rise to a global solution of either \eqref{main-eq} or \eqref{Sd-eq}. By stability here, we mean to ask the question of whether a small perturbation of the initial data in either topology still generates a unique global solution. This question can be reformulated in terms of the openness of the set of such $(c_0,n_0,u_0)$. An answer to this question, in the affirmative case may use some intrinsic properties of our initial data class. See for instance \cite{ADT,L} in the context of the Navier-Stokes equations.     
	\end{itemize}

	\section{Preliminaries and auxiliary results}\label{sec:3}
	In this section, we collect key estimates for the homogeneous and the inhomogeneous heat equation.  For a suitable function $f$ (e.g. smooth and compactly supported), 
	denote by $Sf$ the operator \[Sf(x,t)=e^{\Delta t}f(x)=(g_t\ast f)(x)\]
	where $g_t(x)=g(x,t)=\dfrac{e^{\frac{|x|^2}{4t}}}{(4\pi t)^{\frac{N}{2}}}$. Then $Sf$ solves the heat equation $(\partial_t-\Delta)u=0$ in $\rn\times (0,\infty)$, $u(0)=f$ on $\rn$. 
	\begin{lemma}\label{lem:smooth-effect}
		Assume $N>2$. Let $0<R\leq \infty$. The operator $S$ maps $L^{\infty}(\rn)$ to $\X_{1,R^2}$, $\mathscr{L}^{-1}_{2,N-\lambda;R}(\rn)$ to $\X_{2,R^2}$ and $BMO^{-1}_{R}(\rn)$ to $\X_{3,R^2}$ continuously. If $N=2$, then $Sf\in \X_2$ whenever $f\in \dot{B}^{-1}_{2,2}(\mathbb{R}^2)$ and there exists $C>0$ independent of $f$ such that
		\begin{align}
			\|Sf\|_{\X_2}\leq C\|f\|_{\dot{B}^{-1}_{2,2}(\mathbb{R}^2)}.    
		\end{align}
	\end{lemma}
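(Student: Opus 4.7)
The plan is to treat each of the four mapping properties separately; the common thread is that each Carleson-type expression in the definitions of $\X_{j,R^2}$ matches a well-known characterization of the ambient initial data space, so every bound reduces to a kernel computation plus a classical Carleson-measure result. For Claim~1, $S: L^\infty(\rn)\to \X_{1,R^2}$, the pointwise estimates $\|Sf(t)\|_{L^\infty}\leq \|f\|_{L^\infty}$ and $\sqrt t\,\|\nabla Sf(t)\|_{L^\infty}\leq C\|f\|_{L^\infty}$ follow from $\|g_t\|_{L^1}=1$ and $\|\nabla g_t\|_{L^1}\lesssim t^{-1/2}$; the Carleson piece of $\|\cdot\|_{\X_{1,R^2}}$ is then controlled by the inclusion $L^\infty\hookrightarrow BMO$ together with the Fefferman--Stein characterization
$$\|f\|_{BMO(\rn)}^2\;\sim\;\sup_{x,\,R>0}\Ball^{-1}\IntC|\nabla Sf(y,t)|^2\,dy\,dt.$$
For Claim~3, I would write $u_0=\sum_j \partial_j u_{0,j}$ with $u_{0,j}\in BMO_R$, whence both the pointwise decay $\sqrt t\,\|Su_0(t)\|_{L^\infty}\lesssim \|u_0\|_{BMO^{-1}_R}$ and the Carleson norm defining $\X_{3,R^2}$ fall out of the same characterization; this is the original Koch--Tataru computation of \cite{KT}.

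For Claim~2, $S:\mathscr{L}^{-1}_{2,N-2;R}(\rn)\to \X_{2,R^2}$, the Carleson part of the $\X_{2,R^2}$-norm is literally the $\Y(\rn)$-norm of $f$ by Definition~2.2, and the appendix lemma identifies this with $\|f\|_{\mathscr{L}^{-1}_{2,N-2}}$; only the pointwise bound $t\,\|Sf(t)\|_{L^\infty}\leq C\|f\|_{\mathscr{L}^{-1}_{2,N-2}}$ remains. Writing $f=\sum_{j=1}^N\partial_j f_j$ with $f_j\in \mathscr{L}_{2,N-2}(\rn)$ so that $Sf(\cdot,t)=\sum_j(\partial_j g_t)\ast f_j$, I would split the convolution over the ball $\{|x-y|<\sqrt t\}$ and the annuli $A_k=\{2^k\sqrt t\leq |x-y|<2^{k+1}\sqrt t\}$, $k\geq 0$. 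The Gaussian tail bound $|\partial_j g_t(x-y)|\lesssim t^{-(N+1)/2}e^{-c 4^k}$ combined with Cauchy--Schwarz and the Campanato estimate
$$\int_{A_k}\bigl|f_j(y)-\overline{f_j}_{B_k}\bigr|^2\,dy\;\leq\;C(2^k\sqrt t)^{N-2}\|f_j\|_{\mathscr{L}_{2,N-2}}^2,$$
where $B_k$ is the enclosing ball, gives an annular contribution of order $t^{-1}e^{-c 4^k/2}$ per component; the constant mean $\overline{f_j}_{B_k}$ is absorbed using the cancellation $\int \partial_j g_t\,dy=0$. Summing the resulting geometric series in $k$ yields the claimed $t^{-1}$ decay in $L^\infty$.

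For Claim~4, when $N=2$ and $f\in \dot B^{-1}_{2,2}(\mathbb{R}^2)$, a Littlewood--Paley decomposition $f=\sum_j \dot{\Delta}_j f$ together with Bernstein's inequalities produces $\|e^{t\Delta}\dot{\Delta}_j f\|_{L^\infty(\mathbb{R}^2)}\lesssim 2^j e^{-ct 4^j}\|\dot{\Delta}_j f\|_{L^2}$ and $\|e^{t\Delta}\dot{\Delta}_j f\|_{L^2(\mathbb{R}^2)}\lesssim e^{-ct 4^j}\|\dot{\Delta}_j f\|_{L^2}$. Multiplying the first by $t$ and using $2^j t e^{-ct 4^j}\lesssim 2^{-j}$, then summing in $j$, recovers $\sup_t t\|Sf(t)\|_{L^\infty}$; squaring the second and integrating in $t$ yields $\|Sf\|_{L^2(\mathbb{R}^2\times(0,\infty))}$; both are dominated by $\|f\|_{\dot B^{-1}_{2,2}}$. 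The main obstacle is Claim~2: the Campanato assumption is strictly weaker than $BMO$, and recovering an $L^\infty$ decay rate of $t^{-1}$ from it requires the careful annular decomposition above together with the exact cancellation of $\partial_j g_t$; the remaining three claims reduce to classical Carleson-measure or Littlewood--Paley arguments.
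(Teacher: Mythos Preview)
Your proposal is correct, and for Claims~1 and~3 it coincides with the paper's proof. For Claims~2 and~4 your routes differ from the paper's, and the comparison is worth recording.

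\medskip

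\textbf{Claim 2 (the pointwise bound $t\|Sf(t)\|_{L^\infty}\lesssim\|f\|_{\mathscr{L}^{-1}_{2,N-2}}$).} The paper does not perform an annular decomposition. Instead it invokes the embedding $\mathscr{L}^{-1}_{2,N-2}(\rn)\hookrightarrow \dot B^{-2}_{\infty,\infty}(\rn)$, proved in the appendix by a soft scaling/translation-invariance argument: one first shows $|\langle f,e^{-|x|^2/4}\rangle|\leq C\|f\|_{\mathscr{L}^{-1}_{2,N-2}}$, then uses that both sides scale identically to upgrade this to $\sup_{t>0}t\,\|e^{t\Delta}f\|_{L^\infty}\leq C\|f\|_{\mathscr{L}^{-1}_{2,N-2}}$. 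Your direct computation via $f=\sum_j\partial_j f_j$, Gaussian annuli, and the Campanato oscillation bound is more elementary and self-contained; the trade-off is that your sentence ``the constant mean $\overline{f_j}_{B_k}$ is absorbed using the cancellation $\int\partial_j g_t\,dy=0$'' hides a step. The cancellation lets you subtract a \emph{single} constant from $f_j$ globally, so on the $k$-th annulus you must control $|\overline{f_j}_{B_k}-\overline{f_j}_{B_0}|$ by telescoping over dyadic scales; since $|\overline{f_j}_{B_{2^{\ell+1}\sqrt t}}-\overline{f_j}_{B_{2^{\ell}\sqrt t}}|\lesssim (2^{\ell}\sqrt t)^{-1}\|f_j\|_{\mathscr{L}_{2,N-2}}$ the sum is $O(t^{-1/2})$ and the Gaussian tail still absorbs it. Once this is made explicit your argument is complete.

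\medskip

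\textbf{Claim 4 (the $N=2$ pointwise bound).} For $\|Sf\|_{L^2(\mathbb{R}^2\times(0,\infty))}$ both you and the paper rely on the caloric characterization of $\dot B^{-1}_{2,2}$. For $\sup_t t\|Sf(t)\|_{L^\infty}$, however, the paper avoids Littlewood--Paley entirely: it writes $e^{t\Delta}f=e^{(t-s)\Delta}e^{s\Delta}f$ for $0<s<t/2$, averages in $s$, and applies H\"older against the heat kernel to get
\[
|e^{t\Delta}f(x)|\leq C\,t^{-1}\Bigl(\int_0^{t/2}\!\!\int_{\mathbb{R}^2}|e^{s\Delta}f(y)|^2\,dy\,ds\Bigr)^{1/2}\leq C\,t^{-1}\|f\|_{\dot B^{-1}_{2,2}}.
\]
Your Bernstein approach is equally valid and arguably more transparent about where the exponent $-1$ comes from; the paper's semigroup trick is shorter and has the virtue of deducing the $L^\infty$ bound directly from the $L^2$ bound already established, without a second frequency decomposition.
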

	\begin{proof}
		From the Carleson measure characterization of $BMO$ (see e.g. \cite{St}) it is well-known that
		\[\sup_{x\in \rn,r>0}|B(x,r)|^{-1}\int_{0}^{r^2}\int_{B(x,r)}|\nabla Sf(x,t)|^2dxdt\approx \|f\|_{BMO(\rn)}.\] If $f\in BMO_R(\rn)$ for some $0<R\leq\infty$, then an analogue of the above inequality holds where the supremum on the left-hand side is taken over all balls of radius $R$ and smaller. Thus, from the continuous embedding $L^{\infty}(\rn)\subset BMO_R(\rn)$ one gets the estimate 
		\[\sup_{x\in \rn,0<r\leq R}|B(x,r)|^{-1}\int_{0}^{r^2}\int_{B(x,r)}|\nabla Sf(x,t)|^2dxdt\leq C\|f\|_{L^{\infty}(\rn)}\]
		for any $f\in L^{\infty}(\rn).$
		On the other hand, the estimate \[\displaystyle\sup_{0<t\leq R^2}\|Sf(t)\|_{L^{\infty}(\rn)}+\sup_{0<t\leq R^2}t^{1/2}\|\nabla Sf(t)\|_{L^{\infty}(\rn)}\leq C\|f\|_{L^{\infty}(\rn)}\] follows straight from the smoothing effect of the heat semigroup. 
		The boundedness property of $S$ from $\mathscr{L}^{-1}_{2,N-2;R}(\rn)$ to $\X_{2,R^2}$ is a consequence of  \eqref{embedding-BM-Y_Besov} and the Carleson's characterization of $\mathscr{L}^{-1}_{2,N-2;R}(\rn)$ (see Definition \ref{def:M-1} and Lemma \ref{lem:M^{-1}-charac} in the Appendix) while the proof of the third statement can be found in \cite{L}. In the case $N=2$,  we shall show that 
		\begin{align}\label{2D-lin1}
			\sup_{t>0}t\|u(t)\|_{L^{\infty}(\mathbb{R}^2)}\leq C\|f\|_{\dot{B}^{-1}_{2,2}(\mathbb{R}^2)} 
		\end{align}
		and
		\begin{align}\label{2D-lin2}
			\|u\|_{L^{2}(\mathbb{R}^2\times (0,\infty))}\leq C\|f\|_{\dot{B}^{-1}_{2,2}(\mathbb{R}^2)}. 
		\end{align}
		While the latter estimate follows from the caloric characterization of Besov spaces \cite{Tr}, the former is established as follows. For $x\in \rn$ and $0<s<t/2$, one may use semigroup properties and H\"{o}lder's inequality to get 
		\begin{align*}
			|e^{t\Delta}f(x)|&= \big|e^{(t-s)\Delta}e^{s\Delta}f(x)\big|\\
			&\leq C\bigg(\dfrac{1}{t}\int^{t/2}_0\int_{\mathbb{R}^2} g(x-y,t-s)|e^{s\Delta}f(y)|^2dyds\bigg)^{1/2}\\
			&\leq C\bigg(\dfrac{1}{t}\int^{t/2}_0\int_{\mathbb{R}^2} (t-s)^{-1}e^{-\frac{|x-y|^2}{4(t-s)}}|e^{s\Delta}f(y)|^2dyds\bigg)^{1/2}\\
			&\leq Ct^{-1}\bigg(\int^{t/2}_0\int_{\mathbb{R}^2} e^{-\frac{|x-y|^2}{4(t-s)}}|e^{s\Delta}f(y)|^2dyds\bigg)^{1/2}\\
			&\leq  Ct^{-1}\|e^{s\Delta}f\|_{L^{2}(\mathbb{R}^2\times (0,\infty))}\\
			&\leq Ct^{-1}\|f\|_{\dot{B}^{-1}_{2,2}(\mathbb{R}^2)}.
		\end{align*}
		Passing to the supremum on both sides over all $t\in (0,\infty)$ yields the desired bound.
	\end{proof}
	Let $d_0 \in UC(\rn)$, for any $\varepsilon_0>0$, there exists $\delta_0>0$ such that for all $x,y\in \rn$ with $|x-y|<\delta_0$, we have $|d_0(x)-d_0(y)|\leq \varepsilon_0.$
	\begin{lemma}\label{lem:UC-pert}
		Let $d_0 \in UC(\rn)$ and set $\varGamma_{\delta_0}=e^{\delta_0^2\Delta}d_0$. Then the following estimates hold.
		\begin{align}
			\|\varGamma_{\delta_0}\|_{L^{\infty}(\rn)}\leq C\\
			\|\varGamma_{\delta_0}-d_0\|_{L^{\infty}(\rn)}+\delta_0\|\nabla\varGamma_{\delta_0}\|_{L^{\infty}(\rn)}+\delta_0^2\|\nabla^2\varGamma_{\delta_0}\|_{L^{\infty}(\rn)}\leq C\varepsilon_0
		\end{align}
		for some constant $C>0$.
	\end{lemma}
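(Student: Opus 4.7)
The proof rests on two ingredients: pointwise identities exploiting the unit mass and the vanishing higher moments of the heat kernel $g_t$, together with a linear growth bound on the modulus of continuity of $d_0$ inherited from its uniform continuity.

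The uniform bound $\|\varGamma_{\delta_0}\|_{L^{\infty}(\rn)} \leq C$ is immediate from Young's inequality since $\|g_{\delta_0^2}\|_{L^1(\rn)} = 1$, yielding $C = \|d_0\|_{L^{\infty}(\rn)}$. To handle the remaining three terms, I would first strengthen the uniform continuity property as follows. Any two points $x, y \in \rn$ may be joined by a straight segment subdivided into $k \leq 1 + |x-y|/\delta_0$ pieces, each of length strictly less than $\delta_0$. Applying the hypothesis piecewise and summing via the triangle inequality gives the linear modulus bound
\[
|d_0(x) - d_0(y)| \leq k\,\varepsilon_0 \leq C\bigl(1 + |x-y|/\delta_0\bigr)\varepsilon_0, \qquad x,y \in \rn,
\]
with $C$ a universal constant. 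This estimate is the engine driving the remaining three bounds.

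Next, since $\int_{\rn} g_{\delta_0^2}(z)\,dz = 1$ and $\int_{\rn}\partial^{\alpha} g_{\delta_0^2}(z)\,dz = 0$ for every multi-index $|\alpha| \geq 1$ (by Gaussian decay at infinity), I would center each convolution against $d_0$ to obtain
\[
\partial^{\alpha}\varGamma_{\delta_0}(x) = \int_{\rn} \partial^{\alpha}g_{\delta_0^2}(x-y)\bigl(d_0(y) - d_0(x)\bigr)\,dy, \qquad |\alpha| \in \{0,1,2\}.
\]
Inserting the linear modulus bound then produces
\[
\bigl|\partial^{\alpha}\varGamma_{\delta_0}(x)\bigr| \leq C\varepsilon_0 \int_{\rn} \bigl|\partial^{\alpha} g_{\delta_0^2}(z)\bigr|\bigl(1 + |z|/\delta_0\bigr)\,dz,
\]
and the remaining weighted Gaussian moment integrals are routine: after the rescaling $z = \delta_0 w$, each reduces to a dimensional constant times $\delta_0^{-|\alpha|}$. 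The three outputs are $\|\varGamma_{\delta_0} - d_0\|_{L^{\infty}} \leq C\varepsilon_0$, $\|\nabla \varGamma_{\delta_0}\|_{L^{\infty}} \leq C\varepsilon_0/\delta_0$ and $\|\nabla^2 \varGamma_{\delta_0}\|_{L^{\infty}} \leq C\varepsilon_0/\delta_0^2$, which combine to give the claimed inequality once multiplied by the corresponding powers of $\delta_0$.

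The main (admittedly mild) obstacle is the linear modulus bound itself. The naive split of the convolution into near and far regions at scale $\delta_0$ is unsatisfactory because $\int_{|z|\geq \delta_0} g_{\delta_0^2}(z)\,dz$ is a dimensional constant rather than a small quantity, so the far contribution cannot be absorbed into $\varepsilon_0$ without extra care. Iterating the triangle inequality along the segment from $x$ to $y$ bypasses this by upgrading uniform continuity into a Lipschitz-type growth whose increase is exactly compensated by the finite first moments of $g_{\delta_0^2}$ and its derivatives.
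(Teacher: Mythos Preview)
Your argument is correct and complete; the linear modulus-of-continuity bound obtained by subdividing segments is exactly the right tool, and centering the convolution against the vanishing moments of $\partial^{\alpha}g_{\delta_0^2}$ is the standard way to extract the $\varepsilon_0$ factor. One small slip: the displayed identity is only literally correct for $|\alpha|\geq 1$; for $\alpha=0$ the left-hand side should read $\varGamma_{\delta_0}(x)-d_0(x)$, as you yourself use in the subsequent line. Also note that the first estimate $\|\varGamma_{\delta_0}\|_{L^{\infty}}\leq C$ tacitly requires $d_0\in L^{\infty}(\rn)$, which is not guaranteed by $d_0\in UC(\rn)$ alone but is implicit in the paper's application (where $d_0$ approximates the bounded datum $c_0$ in $L^{\infty}$).

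The paper does not supply its own proof of this lemma; it simply refers the reader to Koch--Lamm \cite{KL}. Your argument is essentially the one found there, so there is nothing further to compare.
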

	\begin{proof}
		We refer the reader to  \cite{KL}.
	\end{proof}
	\subsection{Bilinear estimates}
	Let $R_j=\partial_j(-\Delta)^{-1/2}$, $j=1,...,N$ denote the Riesz transform and $\textbf{P}$ be the Leray–Hopf projection operator onto divergence-free vector fields defined (component-wise) on $L^2(\rn)$ by 
	\begin{equation*}
		\textbf{P}_{kj}=\delta_{kj}+R_kR_j,\quad j,k=1,...,N
	\end{equation*}
	where $\delta_{kj}$ is the Kronecker symbol. Applying $\textbf{P}$ to the Navier-Stokes equations
	in \eqref{main-eq}, the resulting equations can be recast into the following integral system
	\begin{align}\label{Duhamel-CNS}
		\begin{cases}
			\displaystyle c=e^{t\Delta}c_0-\int_0^te^{(t-s)\Delta}(c n+u\cdot\nabla c)(\cdot,s)ds\\
			\displaystyle n=e^{t\Delta}n_0-\int_0^te^{(t-s)\Delta}\nabla\cdot(n\nabla c+ nu)(\cdot,s)ds\\
			\displaystyle u=e^{t\Delta}u_0-\int_0^te^{(t-s)\Delta}\textbf{P}\nabla\cdot (u\otimes u)(\cdot,s)ds-\int_0^te^{(t-s)\Delta} \textbf{P}(n\nabla \Phi)(\cdot,s)ds.
		\end{cases}    
	\end{align}
	Define the linear map 
	\begin{align*}
		\mathscr{L}_{\Phi}(n)&=\int_0^te^{(t-s)\Delta}\textbf{P}(n\nabla\Phi)(\cdot,s)ds
	\end{align*}
	and the bilinear maps 
	\begin{align*}
		B_1(w,n)&=\int_0^te^{(t-s)\Delta} (wn)(\cdot,s)ds,\\
		B_2(n,w)&=\int_0^te^{(t-s)\Delta}\nabla\cdot (n w)(\cdot,s)ds,\\
		B_3(u,w)&=\int_0^te^{(t-s)\Delta}\textbf{P}\nabla\cdot (u\otimes w)(\cdot,s)ds
	\end{align*}
	whenever the integrals are well-defined. The next lemma establishes the continuity properties of these maps in targeted functions spaces. 
	\begin{lemma}\label{lem:bilinear-est}Let $N\geq  2$ and $0<T\leq \infty$. Assume that $\Phi\in \mathcal{S}'(\rn)$ with $\nabla\Phi\in M_{2,N-2}(\rn)$. The linear operator  $\mathscr{L}_{\Phi}(n):\X_{2,T}\rightarrow \X_{3,T}$ continuously and the bilinear operators $B_j(\cdot,\cdot)$, $j=1,2,3$ are such that
		\[B_1:\X_{1,T}\times \X_{2,T}\rightarrow \X_{1,T},\hspace{0.12cm} B_1:\X_{3,T}\times \X_{3,T}\rightarrow \X_{1,T},\hspace{0.12cm} B_2:\X_{3,T}\times \X_{3,T}\rightarrow \X_{1,T}\]
		and  $B_3:\X_{3,T}\times \X_{3,T}\rightarrow \X_{3,T}$ continuously. Moreover, there exists $C_j>0$, $j=1,\cdots,5$ such that
		\begin{align}
			\|\mathscr{L}_{\Phi}(n)\|_{\X_{3,T}}&\leq C_1\|n\|_{\X_{2,T}}\|\nabla\Phi\|_{M_{2,N-2}(\rn)}\quad \mbox{for all}\hspace{0.32cm} n\in \X_{2,T}\label{1}\\
			\|B_1(w,n)\|_{\X_{1,T}}&\leq C_2\|w\|_{\X_{1,T}}\|n\|_{\X_{2,T}}\quad \mbox{for all}\hspace{0.32cm} w\in \X_{1,T}\hspace{.1cm}\mbox{and}\hspace{.1cm}n\in \X_{2,T}\label{2}\\  
			\|B_1(u,w)\|_{\X_{1,T}}&\leq C_3\|u\|_{\X_{3,T}}\|w\|_{\X_{3,T}}\quad \mbox{for all}\hspace{0.32cm} u,w\in \X_{3,T}\\  
			\|B_2(n,w)\|_{\X_{2,T}}&\leq C_4\|n\|_{\X_{2,T}}\|w\|_{\X_{3,T}}\quad \mbox{for all}\hspace{0.32cm} n\in \X_{2,T}\hspace{.1cm}\mbox{and}\hspace{.1cm}w\in \X_{3,T}\label{3}\\
			\|B_3(v,w)\|_{\X_{3,T}}&\leq C_5\|v\|_{\X_{3,T}}\|w\|_{\X_{3,T}}\quad \mbox{for all}\hspace{0.32cm} v,w\in \X_{3,T}.    
		\end{align}
	\end{lemma}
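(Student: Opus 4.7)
The plan is to verify each bound by controlling separately the two components of the target norm, namely the $L^{\infty}$-in-space piece (with its appropriate time weight) and the Carleson-type $L^{2}$ piece. The basic tools are the pointwise Gaussian bound on the heat kernel $g_{t-s}$ and its first spatial derivative, the $L^{2}$- and $L^{\infty}$-contractivity of $e^{(t-s)\Delta}$, and the boundedness of the Leray projection $\mathbf{P}$ (equivalently, of the Riesz transforms) on Morrey spaces, which is classical (see \cite{KY}).

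For the $L^{\infty}$-in-space components, I would split each time integral $\int_{0}^{t}$ at $s=t/2$. On $[t/2,t]$ the pointwise bound $\|e^{(t-s)\Delta}F(s)\|_{L^{\infty}}\leq C\|F(s)\|_{L^{\infty}}$ combines with the $L^{\infty}$-control on each factor provided by the first term in the definition of each $\X_{j,T}$-norm, namely $\|w(s)\|_{L^{\infty}}\leq \|w\|_{\X_{1,T}}$, $\|n(s)\|_{L^{\infty}}\leq s^{-1}\|n\|_{\X_{2,T}}$, $\|u(s)\|_{L^{\infty}}\leq s^{-1/2}\|u\|_{\X_{3,T}}$, plus a $(t-s)^{-1/2}$ gain from $\nabla e^{(t-s)\Delta}$ whenever a derivative sits outside the bilinear integrand (this occurs for $B_{2}$ and $B_{3}$); the resulting time integrals are all elementary Beta-type integrals with the prescribed scaling. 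On $[0,t/2]$, the kernel is essentially smoother than $g_{t}$, so I would transfer the convolution to a local-$L^{2}$ bound and sum over dyadic spatial annuli centered at the evaluation point, using the Gaussian decay of $g_{t}$ against the relevant Carleson $L^{2}$ norms of the factors.

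The Carleson-type $L^{2}$ bounds are the heart of the matter. Fix a parabolic box $B(x,R)\times(0,R^{2})$. I would decompose each factor (or the product) into its restriction to $B(x,2R)$ plus the complementary piece. The near part is handled by the $L^{2}$-boundedness of $e^{(t-s)\Delta}$ (or of $\nabla e^{(t-s)\Delta}$ with a $(t-s)^{-1/2}$ loss, for $B_{2}$ and $B_{3}$), so that its $L^{2}$-norm over the box is controlled by the local $L^{2}$ norms of the factors on $B(x,2R)$, which are exactly what the definitions of the $\X_{j,T}$ provide; in particular the weight $|B(x,R)|^{2/N-1}$ in $\X_{2,T}$ is dictated by this scaling. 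The far part is estimated by Gaussian decay of the kernel on annuli $\{2^{k}R\leq |y-x|<2^{k+1}R\}$, with the Carleson norms applied at each dyadic scale $2^{k}R$; the resulting algebraic growth in $k$ is absorbed by the exponential decay, producing a convergent geometric series. This is essentially the Koch-Tataru scheme used for $B_{3}$ in \cite{KT}, and the same bookkeeping applies to $B_{1}$ and $B_{2}$ once the scaling is tracked carefully. For the linear operator $\mathscr{L}_{\Phi}$, the key observation is that $n\nabla\Phi$ naturally pairs the Carleson control on $n$ encoded in $\X_{2,T}$ with the time-independent Morrey norm of $\nabla\Phi\in M_{2,N-2}$; a Cauchy-Schwarz step in space places $n\nabla\Phi$ into a Carleson class at the correct scaling, after which the boundedness of $\mathbf{P}$ on Morrey spaces and the heat-kernel analysis above deliver the $\X_{3,T}$ estimate.

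The main obstacle will be the careful scaling bookkeeping in the Carleson bounds for $B_{2}$ and $B_{3}$, where the derivative on the heat kernel produces a $(t-s)^{-1/2}$ singularity near the diagonal. Ensuring that the dyadic sum over annuli converges and that the resulting bound is independent of $R$ (so as to furnish a uniform Carleson estimate) requires a precise match between the factor $|B(x,R)|^{2/N-1}$ in the $\X_{2,T}$ norm and the integrability of $(t-s)^{-1/2}$ on $[0,R^{2}]$; this is exactly the point at which the critical nature of the space $\Y(\rn)$ manifests itself and explains why $\X_{2,T}$ has been defined with this specific weight.
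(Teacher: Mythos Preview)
Your plan is correct and follows essentially the same architecture as the paper: time splitting at $t/2$, near/far spatial decomposition with annular summation against Gaussian (or Oseen) decay for the far piece, and Young's inequality in space for the near piece of the Carleson bound, all modeled on the Koch--Tataru scheme you cite for $B_3$.

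Two points where the paper's execution differs from your sketch and are worth noting. First, for the near-part Carleson bounds the paper does not simply invoke $L^2$-boundedness of the semigroup; after Young's inequality in space one is left with a time convolution of the form $\int_0^s (s-\sigma)^{-1/2}\sigma^{-1/2}h(\sigma)\,d\sigma$ (or with other exponents), and the paper isolates a separate lemma showing such fractional operators are bounded on $L^2$ via a pointwise maximal-function estimate. You allude to this as ``elementary Beta-type integrals'' and again in your last paragraph, so you clearly see the issue, but the clean way to close it is to prove that auxiliary bound once. Second, for $\mathscr{L}_\Phi$ the paper does not use boundedness of $\mathbf{P}$ on Morrey spaces; it works directly with the Oseen kernel pointwise bound $|k_2(x,t)|\lesssim t^{-N/2}(1+t^{-1/2}|x|)^{-N}$. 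The near-part Carleson estimate there is the most delicate step: your ``Cauchy--Schwarz step in space'' is too coarse as stated, since pairing the $L^2$-Carleson control on $n$ directly against the $L^2$-Morrey control on $\nabla\Phi$ lands in $L^1$, which is not enough. The paper instead uses the $L^\infty$ part of the $\X_{2,T}$ norm on $n$, applies Young with a carefully chosen exponent $\theta_0\in(1,2)$ so that $\|\nabla\Phi\|_{L^{\theta_0}(B)}$ is controlled by the Morrey norm via H\"older, and then closes the time integral with the auxiliary lemma for suitable $\alpha,\beta$. Your route through Morrey boundedness of the Riesz transforms could be made to work, but you would still face this exponent bookkeeping.
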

	Let $f$ be a locally integrable function and  $\mathcal{M}f$ its  uncentered maximal function  defined as 
	\[\mathcal{M}f(x)=\sup_{B \ni x}|B|^{-1}\int_{B}|f(y)|dy.\]
	Let $p\in (1,\infty)$ with $1/p+1/p'=1$. A nonnegative measurable function $\mu$ on $\rn$ belongs to the Muckenhoupt weight class $A_p(\rn)=A_p$ if 
	\begin{equation*}
		[\mu]_{A_p}:=\sup_{B\subset \rn}\bigg(\fint_{B}\mu(x)dx\bigg)\bigg(\fint_{B}\mu(x)^{-p'/p}dx\bigg)^{p/p'}<\infty.  
	\end{equation*}
	Given a weight $\mu\in A_p$ and denoting $L^p_{\mu}(\rn)=L^p(\rn,\mu dx)$, it is well-known (see e.g. \cite{GR}) that $\mu\in A_p$ if and only if $\mathcal{M}$ is bounded on $L^p_{\mu}(\rn)$. Given a non-negative measurable function $h$ on $(0,\infty)$ and $\alpha,\beta\in (0,1)$, define the fractional integral operator 
	\begin{equation*}
		E(h)(s)=\int^s_0(s-\sigma)^{\alpha-1}\sigma^{-\beta}h(\sigma)d\sigma. 
	\end{equation*}
	The next lemma establishes the boundedness properties of $E$ between  weighted-Lebesgue spaces. 
	\begin{lemma}\label{lem:bound-E}
		Let $\alpha,\beta\in (0,1)$ and $p\in (1,\infty)$ such that $-1/p<\alpha-\beta<1/p'$ holds. Then $E$ maps $L_{\nu}^p((0,\infty))$ continuously into $L^p((0,\infty))$ where  $\nu(s)=s^{(\alpha-\beta)p}$, $s>0$. In particular, $E$ is bounded on $L^p((0,\infty))$ $($including $p=\infty$$)$ if $\alpha=\beta$.
	\end{lemma}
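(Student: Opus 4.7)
The plan is to reduce the weighted $L^p$-mapping property of $E$ to a one-dimensional convolution inequality on $\mathbb{R}$ by exploiting the dilation structure of the kernel through a logarithmic change of variables.

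Concretely, I would set $s = e^u$ and $\sigma = e^v$, and introduce the rescaled functions
\[
F(v) := e^{(\alpha - \beta + 1/p)v}\, h(e^v), \qquad G(u) := e^{u/p}\, E(h)(e^u),
\]
chosen precisely so that $\|F\|_{L^p(\mathbb{R})} = \|h\|_{L^p_{\nu}((0,\infty))}$ and $\|G\|_{L^p(\mathbb{R})} = \|E(h)\|_{L^p((0,\infty))}$. A direct manipulation using $(s-\sigma)^{\alpha-1} = e^{u(\alpha-1)}(1-e^{v-u})^{\alpha-1}$, $\sigma^{-\beta}\, d\sigma = e^{(1-\beta)v}\,dv$, and the translation $w = u - v$ recasts $E$ as the one-sided convolution
\[
G(u) = (K \ast F)(u), \qquad K(w) := (1 - e^{-w})^{\alpha - 1}\, e^{-(1 - \alpha - 1/p)w}\, \mathbf{1}_{\{w > 0\}}
\]
on the line. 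The case $\alpha = \beta$ reduces to the same convolution representation with $F(v) = e^{v/p}h(e^v)$.

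By Young's convolution inequality, $\|G\|_{L^p(\mathbb{R})} \leq \|K\|_{L^1(\mathbb{R})}\,\|F\|_{L^p(\mathbb{R})}$, so matters are reduced to verifying $\|K\|_{L^1(\mathbb{R})} < \infty$. The substitution $t = e^{-w}$ identifies this $L^1$-norm with the Beta integral
\[
\int_0^1 (1-t)^{\alpha-1}\,t^{-\alpha-1/p}\,dt = B\!\left(1/p' - \alpha,\,\alpha\right),
\]
whose finiteness is exactly what the parameter conditions are engineered to secure: integrability at $t=1$ from $\alpha > 0$, and integrability at $t=0$ from the upper bound contained in the hypothesis on the range of $\alpha-\beta$. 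The endpoint case $\alpha=\beta$ with $p=\infty$ is handled separately by the pointwise estimate
\[
|E(h)(s)| \leq \|h\|_{L^\infty}\int_0^s (s-\sigma)^{\alpha-1}\sigma^{-\alpha}\,d\sigma = \|h\|_{L^\infty}\,B(\alpha,\,1-\alpha),
\]
which is independent of $s$.

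The main technical obstacle is purely bookkeeping: the exponents in the definitions of $F$ and $G$ must be tuned simultaneously to realize the two isometries $\|F\|_{L^p(\mathbb{R})} = \|h\|_{L^p_\nu}$ and $\|G\|_{L^p(\mathbb{R})} = \|E(h)\|_{L^p}$, while leaving the convolution kernel $K$ in $L^1(\mathbb{R}_+)$ under the prescribed range of parameters. Once the logarithmic change of variables puts $E$ into genuine convolution form, the conclusion follows at once from classical Young, and the norm bound inherits the continuous dependence on $\alpha$, $\beta$, $p$ via the Beta function.
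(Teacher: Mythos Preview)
Your approach is genuinely different from the paper's. The paper invokes the pointwise estimate $|E(h)(s)|\leq Cs^{\alpha-\beta}\mathcal{M}h(s)$ (cited from \cite{MYZ}) and then uses that $\nu(s)=s^{(\alpha-\beta)p}$ is an $A_p$ weight precisely when $-1/p<\alpha-\beta<1/p'$, so that the Hardy--Littlewood maximal operator is bounded on $L^p_\nu$. Your logarithmic substitution followed by Young's inequality is more elementary and yields an explicit constant via the Beta function; the $p=\infty$ endpoint is also handled cleanly.

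There is, however, a genuine gap. After the change of variables your kernel is
\[
K(w)=(1-e^{-w})^{\alpha-1}\,e^{-(1/p'-\alpha)w}\,\mathbf{1}_{\{w>0\}},
\]
and the Beta integral $\int_0^1(1-t)^{\alpha-1}t^{-\alpha-1/p}\,dt$ converges at $t=0$ (equivalently, $K$ is integrable as $w\to\infty$) if and only if $\alpha<1/p'$. You claim this follows from the upper bound $\alpha-\beta<1/p'$ in the hypothesis, but since $\beta>0$ that implication fails: take $p=2$, $\alpha=3/4$, $\beta=1/2$, which satisfies $-1/2<\alpha-\beta=1/4<1/2$ yet has $\alpha=3/4>1/2=1/p'$. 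In this regime $K(w)\sim e^{w/4}$ as $w\to\infty$, so for any compactly supported $F\in L^p$ the convolution $K\ast F$ grows exponentially and Young's inequality is unavailable in principle, not merely in execution. Observe that $K$ depends only on $\alpha$ and $p$; the parameter $\beta$ has been absorbed entirely into the weight on the input side, so your argument can never see the combination $\alpha-\beta$. What you have actually proved is the mapping property under the stronger restriction $\alpha<1/p'$, which covers the main applications in the paper ($\alpha=\beta=1/2$, $p=2$) but not the full range asserted in the lemma.
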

	\begin{proof}
		The proof of the lemma relies on the following pointwise estimate for the operator $E$:  for $0<\alpha,\beta<1$, there exists $C:=C(\alpha,\beta)>0$ such that
		\begin{equation}\label{pointwise-bd}
			|E(h)(s)|\leq Cs^{\alpha-\beta}\mathcal{M}h(s).    
		\end{equation}
		The proof of \eqref{pointwise-bd} is direct, we refer the reader to \cite{MYZ} for details. An immediate consequence of the latter is that the mapping properties of $E$ may be deduced from those of $\mathcal{M}$. Since the function $\nu(s)=s^{(\alpha-\beta)p}$  is an $A_p$-weight under the restriction $-1/p<\alpha-\beta<1/p'$, one has 
		\begin{align}
			\|E(h)\|_{L^p((0,\infty))}\leq C\|\mathcal{M}h\|_{L^p_{\nu}((0,\infty))}\leq C\|h\|_{L^p_{\nu}((0,\infty))}.    
		\end{align}
	\end{proof}
	\begin{proof}[Proof of Lemma \ref{lem:bilinear-est}] We estimate $B_1$, $B_2$ and $\mathscr{L}_{\Phi}(n)$ in 3 steps respectively. The required bound on the bilinear map $B_3$ is known and can be found for instance in \cite{KT}.  
		\begin{step1}[Estimates on $B_1$]\normalfont We prove that $B_1(\cdot,\cdot)$ is continuous from $\X_{1,T}\times \X_{2,T}$ to $\X_{1,T}$. Mimicking the same steps, we similarly show that $B_1(\cdot,\cdot):\X_{3,T}\times \X_{3,T}\rightarrow \X_{1,T}$ is continuous. The details of the latter case are therefore omitted. 
			Let $w\in \X_{1,T}$ and $n\in \X_{2,T}$. For  $(x,t)\in \rn\times (0,T]$ write
			\begin{align*}
				B_1(w,n)(x,t)&=\int_0^t\int_{\rn}g(x-y,t-s)(wn)(y,s)dyds:=B_{11}(w,n)(x,t)+B_{12}(w,n)(x,t) 
			\end{align*}
			where 
			\begin{align*}
				B_{11}(w,n)(x,t)&=\int_0^{t/2}\int_{\rn}g(x-y,t-s)(wn)(y,s)dyds,\\
				B_{12}(w,n)(x,t)&=\int_{t/2}^t\int_{\rn}g(x-y,t-s)(wn)(y,s)dyds.    
			\end{align*}
			Let $B^c_{r}(x)$ denote the complement of the Euclidean ball $B_{r}(x)=B(x,r)$ with center at $x\in \rn$ and radius $r>0$ and further make the decomposition 
			\begin{align*}
				B_{11}(w,n)(x,t)&=\int_0^{t/2}\bigg(\int_{B_{2\sqrt{t}}(x)}+\int_{B^c_{2\sqrt{t}}(x)}\bigg)g(x-y,t-s)[wn](y,s)dyds\\
				&=B^1_{11}(w,n)(x,t)+B^2_{11}(w,n)(x,t).
			\end{align*}
			Using H\"older's inequality, one gets 
			\begin{align*}
				|B^1_{11}(w,n)(x,t)|&\leq \int_0^{t/2}\int_{B_{2\sqrt{t}}(x)}g(x-y,t-s)|(nw)(y,s)|dyds\\ 
				&\leq C\sup_{0<t\leq T}\|w(t)\|_{L^{\infty}(\rn)}\|g\|_{L^{2}(B_{2\sqrt{t}}(0)\times [t/2,t])}\bigg(\int_0^{t/2}\int_{B_{2\sqrt{t}}(x)}|n(y,s)|^2dyds\bigg)^{\frac{1}{2}}\\
				&\leq C\|w\|_{\X_{1,T}}t^{\frac{2-N}{4}}\bigg(\int_0^{t/2}\int_{B_{2\sqrt{t}}(x)}|n(y,s)|^2dyds\bigg)^{\frac{1}{2}}\leq C\|w\|_{\X_{1,T}}\|n\|_{\X_{2,T}}.
			\end{align*}
			On the other hand, if one sets $A_j(x)=B_{2(j+1)\sqrt{t}}(x)\setminus B_{2j\sqrt{t}}(x)$, then it follows that 
			\begin{align*}
				|B^2_{11}(w,n)(x,t)|&\leq \int_0^{t/2}\int_{B^c_{2\sqrt{t}}(x)}g(x-y,t-s)|(nw)(y,s)|dyds\\
				&\leq C\sup_{0<t\leq T}\|w(t)\|_{L^{\infty}(\rn)}\sum_{j=1}^{\infty}\int_0^{t/2}\int_{A_j(x)}\dfrac{e^{-\frac{|x-y|^2}{4(t-s)}}}{(t-s)^{N/2}}|n(y,s)|dyds\\
				&\leq C\|w\|_{\X_{1,T}}\sum_{j=1}^{\infty}e^{-2j^2}\sum_{z\in A_j(x)\cap \sqrt{t}\mathbb{Z}^N}\int_0^{t/2}\int_{B_{\sqrt{t}}(z)}(t-s)^{-N/2}|n(y,s)|dyds\\
				&\leq C\|w\|_{\X_{1,T}}\sum_{j=1}^{\infty}e^{-2j^2}\sum_{z\in A_j(x)\cap \sqrt{t}\mathbb{Z}^N}t^{\frac{2-N}{4}}\bigg(\int_0^{t/2}\int_{B_{\sqrt{t}}(z)}|n(y,s)|^2dyds\bigg)^{1/2}\\
				&\leq C\|w\|_{\X_{1,T}}\bigg(\sum_{j=1}^{\infty}j^{N-1}e^{-2j^2}\bigg)\sup_{z\in \rn}\bigg(t^{2-N}\int_0^{t/2}\int_{B_{\sqrt{t}}(z)}|n(y,s)|^2dyds\bigg)^{1/2}\\
				&\leq C\|w\|_{\X_{1,T}}\|n\|_{\X_{2,T}}.
			\end{align*}
			Since $g\in L^1(\rn)$, it holds that 
			\begin{align*}
				|B_{12}(w,n)(x,t)|&=\int_{t/2}^t\int_{\rn}g(x-y,t-s)|(wn)(y,s)|dyds\\
				&\leq C\sup_{0<t\leq T} \|w(t)\|_{L^{\infty}(\rn)}\sup_{0<t\leq T}t\|n(t)\|_{L^{\infty}(\rn)}t^{-1}\int_{0}^{t/2}\|g(\cdot,s)\|_{L^{1}(\rn)}ds\\
				&\leq C\|w\|_{\X_{1,T}}\|n\|_{\X_{2,T}}.
			\end{align*}
			Moving on, we prove the pointwise gradient bound on $B_1$. For any $x\in \rn$, $0<t\leq T$ and $k_1(x,t)=\nabla_x g(x,t)$ we have
			\begin{align*}
				\nabla & B_1(w,n)(x,t)\\
				&=\int^{t}_0\int_{\rn}k_1(x-y,t-s)(wn)(y,s)dyds\\
				&=\int^{\frac{t}{2}}_0\int_{\rn}k_1(x-y,t-s)(wn)(y,s)dyds+\int^{t}_{\frac{t}{2}}\int_{\rn}k_1(x-y,t-s)(wn)(y,s)dyds\\
				&=B^1_1(w,n)(x,t)+B^2_1(w,n)(x,t).
			\end{align*}
			We estimate each of these terms using the fact that  $k_1(x,t)=-t^{-1}xg(x,t)$  (recall $g$ is the heat kernel). Indeed,
			\begin{align*}
				|B^2_1(w,n)(x,t)|&\leq C\|w\|_{\X_{1}}\|n\|_{\X_{2}}\int^{t}_{t/2}\int_{\rn}s^{-1}|k_1(x-y,t-s)|dyds\\
				&\leq C\|w\|_{\X_{1}}\|n\|_{\X_{2}}\int^{t}_{t/2}\int_{\rn}\dfrac{|x-y|}{s(t-s)}g(x-y,t-s)dyds\\
				&\leq Ct^{-1/2}\|w\|_{\X_{1,T}}\|n\|_{\X_{2,T}}.
			\end{align*}
			Now, note that $\|k_1\|_{L^{2}(B_{2\sqrt{t}}(x)\times [t/2,t])}\leq Ct^{-\frac{N}{4}}$ so that by arguing as above, we find
			\begin{align*}
				&|B^1_1(w,n)(x,t)|\\
				&\leq \int_0^{t/2}\int_{B_{2\sqrt{t}}(x)}|k_1(x-y,t-s)||(nw)(y,s)|dyds+\\
				&\hspace{5cm}\int_0^{t/2}\int_{B^c_{2\sqrt{t}}(x)}|k_1(x-y,t-s)||(nw)(y,s)|dyds\\
				&\leq C\sup_{0<t\leq T}\|w(t)\|_{L^{\infty}(\rn)}\|k_1\|_{L^{2}(B_{2\sqrt{t}}(0)\times [t/2,t])}\|n\|_{L^{2}(B_{2\sqrt{t}}(x)\times (0,t/2])}+\\
				&\hspace{2.5cm}C\sup_{0<t\leq T}\|w(t)\|_{L^{\infty}(\rn)}\sum_{j=1}^{\infty}\int_0^{t/2}\int_{A_j(x)}|k_1(x-y,t-s)||n(y,s)|dyds\\
				&\leq C\|w\|_{\X_{1,T}}t^{-\frac{N}{4}}\bigg(\int_0^{t/2}\int_{B_{\sqrt{t}}(x)}|n(y,s)|^2dyds\bigg)^{1/2}+\\
				&\hspace{1.6cm} C\|w\|_{\X_{1,T}}\sum_{j=1}^{\infty}(j+1)e^{-2j^2}\sum_{z\in A_j(x)\cap \sqrt{t}\mathbb{Z}^N}t^{\frac{-N}{4}}\bigg(\int_0^{t/2}\int_{B_{\sqrt{t}}(z)}|n(y,s)|^2dyds\bigg)^{\frac{1}{2}}\\
				&\leq Ct^{-\frac{1}{2}}\|w\|_{\X_{1,T}}\|n\|_{\X_{2,T}}+\\
				&\hspace{1cm} Ct^{-\frac{1}{2}}\|w\|_{\X_{1,T}}\sum_{j=1}^{\infty}(j+1)j^{N-1}e^{-2j^2}\sup_{z\in \rn}\bigg(t^{-\frac{2-N}{2}}\int_0^{t/2}\int_{B_{\sqrt{t}}(z)}|n(y,s)|^2dyds\bigg)^{\frac{1}{2}}\\
				&\leq Ct^{-\frac{1}{2}}\|w\|_{\X_{1,T}}\|n\|_{\X_{2,T}}.
			\end{align*}
			To estimate the $L^2$-gradient norm, write 
			\begin{align*}
				&|B(x,t)|^{-1}\int_0^{t^2}\int_{B(x,t)}|\nabla B_1(n,w)(y,s)|^2dyds\\
				&=|B(x,t)|^{-1}\int_0^{t^2}\int_{B(x,t)}|I_1(n,w)(y,s)|^2dyds+|B(x,t)|^{-1}\int_0^{t^2}\int_{B(x,t)}|I_2(n,w)(y,s)|^2dyds\\
				&=I(w,n)(x,t)+II(w,n)(x,t),\quad 0<t\leq T
			\end{align*}
			where  
			\begin{equation*}
				I_1(w,n)(y,s)=\int^s_0\int_{\rn}k_1(y-z,s-\sigma)(wn\textbf{1}_{B(x,2t)})(z,\sigma)dzd\sigma  
			\end{equation*}
			and 
			\begin{equation*}
				I_2(w,n)(y,s)=\int^s_0\int_{\rn}k_1(y-z,s-\sigma)(wn\textbf{1}_{B^c(x,2t)})(z,\sigma)dzd\sigma.  
			\end{equation*}
			Since $|k_1(y,s)|\leq C(|y|^2+s)^{-\frac{N+1}{2}}$ for $y\in \rn, s>0,$  one may use Young's convolution inequality to obtain
			\begin{align*}
				\|I_1(w,n)(\cdot,s)\|_{L^2(\rn)}&\leq C\int^s_0(s-\sigma)^{-1/2}\|wn\textbf{1}_{B_{2t}(x)}(\cdot,\sigma)\|_{L^2(\rn)}d\sigma\\
				&\leq C\|w\|_{\X_{1,T}}s^{1/2}\int_0^s(s-\sigma)^{-1/2}\sigma^{-1/2}\|(n\textbf{1}_{B_{2t}(x)})(\sigma)\|_{L^2(\rn)}d\sigma.
			\end{align*}
			Thus, by  Lemma \ref{lem:bound-E} applied with $p=2$; $\alpha=\beta=\frac{1}{2}$, $I(w,n)(x,t)$ may be estimated as follows
			\begin{align*}
				\big| I(w,n)(x,t)\big|&\leq |B(x,t)|^{-1}\int_0^{t^2}\|I_1(\cdot,s)\|^2_{L^2(\rn)}ds\\
				&\leq C\|w\|^2_{\X_{1,T}}|B(x,t)|^{-1}t^2\int^{t^2}_0\|n\textbf{1}_{B_{2t}(x)}(\cdot,s)\|^2_{L^2(\rn)}ds\\
				&\leq C\|w\|^2_{\X_{1,T}}|B(x,t)|^{\frac{2}{N}-1}\int_0^{t^2}\int_{B_{2t}(x)} |n(y,s)|^2dyds\\
				&\leq C\|w\|^2_{\X_{1,T}}\|n\|^2_{\X_{2,T}}.
			\end{align*}  
			Remark that if $z\in B^c(x,2t)$ and $y\in B(x,t)$, then $|y-z|\geq \dfrac{1}{2}|x-z|$. Thus, for $s\leq t^2<T$, one has
			\begin{align*}
				|I_2(w,n)(y,s)|&\leq \int^s_0\int_{|x-z|\geq 2t}|k_1(y-z,s-\sigma)||(wn)(z,\sigma)|dzd\sigma\\    
				&\leq C\int^s_0\int_{|x-z|\geq 2t}\dfrac{|(wn)(z,\sigma)|dzd\sigma }{(|y-z|+(s-\sigma)^{1/2})^{N+1}}\\
				&\leq C\|w\|_{\X_{1,T}}\int^s_0\int_{|x-z|\geq 2t}|y-z|^{-(N+1)}|n(z,\sigma)|dzd\sigma\\
				&\leq C\|w\|_{\X_{1,T}}\int^{t^2}_0\sum_{j=1}^{\infty}\int_{B(x,2(j+1)t)\setminus B(x,2jt)}|x-z|^{-(N+1)}|n(z,\sigma)|dzd\sigma\\
				&\leq C\|w\|_{\X_{1,T}}t^{-(N+1)}\sum_{j=1}^{\infty}j^{-(N+1)}\sum_{\substack{q\in t\mathbb{Z}^N\\ q\in B(x,2(j+1)t)\setminus B(x,2jt)}}\int^{t^2}_0\int_{B(q,t)}|n(z,\sigma)|dzd\sigma\\
				&\leq C\|w\|_{\X_{1,T}}t^{-1}\bigg(\sum_{j=1}^{\infty}j^{-2}\bigg)\sup_{q\in \rn}\bigg(t^{2-N}\int^{t^2}_0\int_{B(q,t)}|n(z,\sigma)|^2dzd\sigma\bigg)^{\frac{1}{2}}\\
				&\leq Ct^{-1}\|w\|_{\X_{1,T}}\|n\|_{\X_{2,T}}.
			\end{align*}
			This estimate directly gives the desired bound for $II(w,n)$, namely  \[\sup_{x\in \rn,0<t\leq \sqrt{T}}\big|II(w,n)(x,t)\big|\leq C\|w\|_{\X_{1,T}}\|n\|_{\X_{2,T}}.\]  
		\end{step1}
		
		\begin{step2}[The bounds on $B_2$] \normalfont Let $n\in \X_{2,T}$ and $w\in \X_{3,T}$, we want to show that 
			\begin{equation}\label{B2}
				\|B_2(n,w)\|_{\X_{2,T}}\leq C\|n\|_{\X_{2,T}}\|w\|_{\X_{3,T}}.    
			\end{equation}
			We first estimate the norm $\displaystyle \sup_{x\in \rn,0<t\leq \sqrt{T}}[B_2(n,w)]^{1/2}_{x,t}$ where
			\begin{equation*}
				[B_2(n,w)]_{x,t}:=|B(x,t)|^{2/N-1}\int_0^{t^2}\int_{B(x,t)}|B_2(n,w)|^2dyds.
			\end{equation*}
			To this end, split $B_2(n,w)(y,s)$ into two parts
			\begin{equation*}
				B_{21}(n,w)(y,s)=\int^s_0\int_{\rn}\nabla g(y-z,s-\sigma)\cdot(nw\textbf{1}_{B_{2t}(x)})(z,\sigma)dzd\sigma,  
			\end{equation*}
			and 
			\begin{equation*}
				B_{22}(n,w)(y,s)=\int^s_0\int_{\rn}\nabla g(y-z,s-\sigma)\cdot(nw\textbf{1}_{B^c_{2t}(x)})(z,\sigma)dzd\sigma.  
			\end{equation*}
			Arguing as before, we can show that the following inequality is valid, namely 
			\begin{align*}
				|B_{22}(n,w)(x,t)|\leq Ct^{-2}\|n\|_{\X_{2,T}}\|w\|_{\X_{3,T}}     
			\end{align*}
			from which we immediately get
			\begin{equation}\label{B_2-nonlocal}
				\sup_{x\in \rn,0<t\leq \sqrt{T}}[B_{22}(n,w)]_{x,t}\leq C\|n\|^2_{\X_{2,T}}\|w\|^2_{\X_{3,T}}.    
			\end{equation}
			Next, using Young's convolution inequality and Lemma \ref{lem:bound-E} with $\alpha=\beta=1/2$ it follows that
			\begin{align*}
				\|B_{22}&(n,w)(\cdot,s)\|_{L^2(\rn)}\\
				&\leq C\int^s_0(s-\sigma)^{-1/2}\|wn\textbf{1}_{B_{2t}(x)}(\cdot,\sigma)\|_{L^2(\rn)}d\sigma\\
				&\leq C\sup_{0<t\leq T}t^{1/2}\|w(\cdot,t)\|_{L^{\infty}(\rn)}\int_0^s(s-\sigma)^{-1/2}\sigma^{-1/2}\|(n\textbf{1}_{B_{2t}(x)})(\sigma)\|_{L^2(\rn)}d\sigma
			\end{align*}
			and 
			\begin{align}\label{B_2local}
				\nonumber[B_{21}(n,w)]_{x,t}&=|B(x,t)|^{2/N-1}\int_0^{t^2}\int_{B(x,t)}|B_{21}(n,w)(y,s)|^2dyds\\
				\nonumber&\leq C |B(x,t)|^{2/N-1}\int_0^{t^2}\|wn\textbf{1}_{B_{2t}(x)}(\cdot,s)\|^2_{L^2(\rn)}ds\\
				\nonumber&\leq C\|w\|^2_{\X_{3,T}}|B(x,t)|^{2/N-1}\big\|\|(n\textbf{1}_{B_{2t}(x)})\|_{L^2(\rn)}(\cdot)\big\|_{L^2(0,t^2]}\\
				\nonumber&\leq C\|w\|^2_{\X_{3,T}}|B(x,t)|^{\frac{2}{N}-1}\int_0^{t^2}\int_{B(x,2t)} |n(y,s)|^2dyds\\
				&\leq C\|w\|^2_{\X_{3,T}}\|n\|^2_{\X_{2,T}}.
			\end{align}  
			Combining \eqref{B_2-nonlocal} and \eqref{B_2local}, we get the desired bound. Next, we show that 
			\begin{equation}\label{pointwise-bd-B_2}
				\sup_{0<t\leq T}t\|B_2(n,w)\|_{L^{\infty}(\rn)}\leq C\|n\|_{\X_{2,T}}\|w\|_{\X_{3,T}}.    
			\end{equation}
			Once again, we make the decomposition 
			\begin{align*}
				B_2(n,w)(x,t)&=\int_0^t\int_{\rn}\nabla g(x-y,t-s)\cdot(nw)(y,s)dyds:=B^1_{2}(n,w)(x,t)+B^2_{2}(n,w)(x,t)
			\end{align*}
			where 
			\begin{align*}
				B^1_{2}(n,w)(x,t)&=\int_0^{t/2}\int_{\rn}\nabla g(x-y,t-s)\cdot(nw)(y,s)dyds,\\
				B^2_{2}(n,w)(x,t)&=\int_{t/2}^t\int_{\rn}\nabla g(x-y,t-s)\cdot(nw)(y,s)dyds.    
			\end{align*}
			To bound $B^2_{2}(n,w)$, we use the kernel decay bound and proceed as follows 
			\begin{align*}
				|B^2_{2}(n,w)(x,t)|&\leq C\|w\|_{\X_{3,T}}\|n\|_{\X_{2,T}}\int^{t}_{t/2}\int_{\rn}s^{-3/2}|\nabla g(x-y,t-s)|dyds\\
				&\leq C\|w\|_{\X_{3,T}}\|n\|_{\X_{2,T}}\int^{t}_{t/2}\int_{\rn}s^{-3/2}\dfrac{|x-y|}{(t-s)}g(x-y,t-s)dyds\\
				&\leq Ct^{-1}\|w\|_{\X_{3,T}}\|n\|_{\X_{2,T}}.
			\end{align*}
			To estimate $B^1_{2}(n,w)$, we decompose it as a sum of two integrals and use H\"{o}lder's inequality: 
			\begin{align*}
				|&B^1_{2}(n,w)(x,t)|\\
				&\leq \int_0^{t/2}\int_{B_{2\sqrt{t}}(x)}|\nabla g(x-y,t-s)||(nw)(y,s)|dyds+\\
				&\hspace{3cm}\int_0^{t/2}\int_{B^c_{2\sqrt{t}}(x)}|\nabla g(x-y,t-s)||(nw)(y,s)|dyds\\
				&\leq Ct^{-\frac{N+1}{2}}\bigg(\int_0^{t/2}\int_{B(x,2\sqrt{t})}|n(y,s)|^2dyds\bigg)^{\frac{1}{2}}\bigg(\int^{t/2}_{0}\int_{B(x,2\sqrt{t})}|w(y,s)|^2dyds\bigg)^{\frac{1}{2}}+\\
				&\hspace{3.7cm}Ct^{-\frac{N+1}{2}}\sum_{j=1}^{\infty}(j+1)e^{-2j^2}\int_0^{t/2}\int_{A_j(x)}|(nw)(y,s)|dyds\\
				&\leq Ct^{-1}\|w\|_{\X_{3,T}}\|n\|_{\X_{1,T}}+\\ &\hspace{0cm}Ct^{-\frac{N+1}{2}}\bigg(\sum_{j=1}^{\infty}(j+1)j^{N-1}e^{-2j^2}\bigg)\bigg(\int_0^{t/2}\int_{B_{\sqrt{t}}(z)}|n(y,s)|^2dyds\bigg)^{\frac{1}{2}}\bigg(\int_0^{t/2}\int_{B_{\sqrt{t}}(z)}|w(y,s)|^2dyds\bigg)^{\frac{1}{2}}\\
				&\leq Ct^{-1}\|w\|_{\X_{3,T}}\|n\|_{\X_{2,T}}.
			\end{align*}
			This shows \eqref{pointwise-bd-B_2} and finishes Step 2. 
		\end{step2}
		
		\begin{step3}[\textit{The bounds on $\mathscr{L}_{\Phi}(n)$}] \normalfont Here we show that $\mathscr{L}_{\Phi}$ is continuous from $ \X_{2,T}$ to $\X_{3,T}$ for any tempered distribution $\Phi$ with $\nabla\Phi\in M_{2,N-2}(\rn)$. The operator $e^{t\Delta}\textbf{P}$ is an integral operator whose kernel is given by the Oseen kernel $k_2(t)$ which satisfies the polynomial decay bound (see for instance \cite[Chapter 11]{L})
			\begin{equation}\label{Oseen}
				t^{|\alpha|/2}|\partial^{\alpha} k_2(x,t)|\leq Ct^{-N/2}(1+t^{-1/2}|x|)^{-N-|\alpha|}\hspace{0.5cm}\mbox{for all}\hspace{0.2cm} \alpha\in \mathbb{N}^N, \hspace{0.1cm}x\in \rn\hspace{0.1cm}\mbox{and}\hspace{0.1cm}t>0.    
			\end{equation}
			Set
			\begin{equation*}
				[\mathscr{L}_{\Phi}n]_{x,t}:=\bigg(|B(x,t)|^{-1}\int_0^{t^2}\int_{B(x,t)}|(\mathscr{L}_{\Phi}n)(y,s)|^2dyds\bigg)^{1/2}.    
			\end{equation*}
			We primarily show that $\displaystyle\sup_{x\in \rn,0<t\leq T^{1/2}}[\mathscr{L}_{\Phi}n]_{x,t}\leq C\|n\|_{\X_{2,T}}\|\nabla\Phi\|_{M_{2,N-2}(\rn)}$.
			Define
			\begin{equation*}
				\mathscr{L}^1_{\Phi}n(y,s)=\int^s_0\int_{\rn}k_2(y-z,s-\sigma)[(n\textbf{1}_{B_{2t}(x)})(\sigma)\nabla\Phi)](z)dzd\sigma,  
			\end{equation*}
			\begin{equation*}
				\mathscr{L}^2_{\Phi}n(y,s)=\int^s_0\int_{\rn}k_2(y-z,s-\sigma)[(n\textbf{1}_{B^c_{2t}(x)})(\sigma)\nabla\Phi)(z)]dzd\sigma.  
			\end{equation*}
			For $0<s\leq t^2<T$ and $y\in B(x,t)$, we have 
			\begin{align*}
				|\mathscr{L}^2_{\Phi}(n)(y,s)|&\leq \int^s_0\int_{|x-z|\geq 2t}|k_2(y-z,s-\sigma)||n(z,\sigma)\nabla\Phi(z)|dzd\sigma\\    
				&\leq C\int^s_0\int_{|x-z|\geq 2t}\dfrac{(s-\sigma)^{-N/2}|n(z,\sigma)\nabla\Phi(z)|}{(1+|y-z|(s-\sigma)^{-1/2})^{N}}dzd\sigma\\ 
				&\leq C\int^s_0\sum_{j=1}^{\infty}\int_{B(x,2^{j+1}t)\setminus B(x,2^{j}t)}\dfrac{(s-\sigma)^{-N/2}|n(z,\sigma)\nabla\Phi(z)|}{(1+|x-z|(s-\sigma)^{-1/2})^{N}}dzd\sigma\\
				&\leq Ct^{1-N}\sum_{j=1}^{\infty}2^{-jN}\bigg(\int^{t^2}_0\int_{B(x,2^{j+1}t)}|n(z,\sigma)|^2dzd\sigma\bigg)^{\frac{1}{2}}\bigg(\int_{B(x,2^{j+1}t)}|\nabla\Phi(z)|^2dz\bigg)^{\frac{1}{2}}\\
				&\leq Ct^{-1}\bigg(\sum_{j=1}^{\infty}4^{-j}\bigg)\|n\|_{\X_{2,T}}\|\nabla\Phi\|_{M_{2,N-2}(\rn)}\leq Ct^{-1}\|n\|_{\X_{2,T}}\|\nabla\Phi\|_{M_{2,N-2}(\rn)}. 
			\end{align*}
			where H\"{o}lder's inequality was used to obtain the estimate before the last. Hence, 
			\begin{align*}
				\sup_{x\in \rn,0<t\leq \sqrt{T}} [\mathscr{L}^2_{\Phi}n]_{x,t}\leq C\|n\|_{\X_{2,T}}\|\nabla\Phi\|_{M_{2,N-2}(\rn)}.   
			\end{align*}
			On the other hand, let $0<\eta<1/2$, $1<\theta<\dfrac{N}{N+2\eta-1}$. Take $1<\theta_0<2$ such that $\dfrac{1}{\theta}+\dfrac{1}{\theta_0}=\dfrac{3}{2}$. Then by Young's inequality  we find that
			\begin{align}\label{pointwise-B31}
				\nonumber\big\|[\mathscr{L}^1_{\Phi}n](s)\big\|_{L^2(\rn)}&\leq C\int_0^{s}(s-\sigma)^{\frac{N}{2}(1/\theta-1)}\|(n\textbf{1}_{B(x,2t)})(\sigma)\nabla\Phi\|_{L^{\theta_0}(\rn)}d\sigma\\    
				\nonumber&\leq Ct^{2\eta}\int_0^{s}(s-\sigma)^{\frac{N}{2}(\frac{1}{\theta}-1)}\sigma^{-\eta}\|(n\textbf{1}_{B(x,2t)})(\sigma)\nabla\Phi\|_{L^{\theta_0}(\rn)}d\sigma.
			\end{align}
			This implies (in view of Lemma \ref{lem:bound-E} with $\alpha=1+\frac{N}{2}(\frac{1}{\theta}-1)$, $\beta=\eta$ and $p=2$) that
			\begin{align*}
				[\mathscr{L}^1_{\Phi}(n)]^2_{x,t}&\leq |B(x,t)|^{-1}\int^{t^2}_0\|\mathscr{L}^1_{\Phi}(n)(\cdot,s)\|^2_{L^2(\rn)}ds\\
				&\leq Ct^{4\eta}|B(x,t)|^{-1}\int^{t^2}_0s^{2+N(1/\theta-1)-2\eta}\|n(s)\textbf{1}_{B_{2t}(x)}\nabla\Phi\|^2_{L^{\theta_0}(\rn)}ds\\
				&\leq C\big(\sup_{0<t\leq T}t\|n(t)\|_{L^{\infty}(\rn)}\big)^2t^{4\eta-N}\|\nabla\Phi\|^2_{L^{\theta_0}(B(x,2t))}\int_0^{t^2}s^{N(1/\theta-1)-2\eta}ds\\
				&\leq C\|n\|^2_{\X_{2,T}}t^{4\eta-N+N(2/\theta_0-1)}\|\nabla\Phi\|^2_{L^{2}(B(x,2t))}\int_0^{t^2}s^{N(1/\theta-1)-2\eta}ds\\
				&\leq C\|\nabla\Phi\|^2_{M_{2,N-2}(\rn)}\|n\|^2_{\X_{2,T}}.
			\end{align*}  
			Finally, we need to prove the estimate 
			\begin{equation}\label{lastbd-B3}
				\sup_{0<t\leq T}t^{1/2}\|[\mathscr{L}_{\Phi}n](\cdot,t)\|_{L^{\infty}(\rn)}\leq C\|n\|_{\X_{2,T}}\|\nabla\Phi\|_{M_{2,N-2}(\rn)}.    
			\end{equation}
			Making use of \eqref{Oseen} and H\"{o}lder's inequality we have
			\begin{align*}
				\big|\mathscr{L}^1_{\Phi}(n)(x,t)\big|&\leq \int_0^{t/2}\int_{B_{2\sqrt{t}}(x)}\dfrac{|n(y,s)\nabla\Phi(y)|dyds}{(|x-y|+(t-s)^{1/2})^N}+\int_0^{t/2}\int_{B^c_{2\sqrt{t}}(x)}\dfrac{|n(y,s)\nabla\Phi(y)|dyds}{(|x-y|+(t-s)^{1/2})^N}\\
				&\leq Ct^{-N/2}\int_0^{t/2}\bigg(\int_{B_{2\sqrt{t}}(x)}|n(y,s)|^2dy\bigg)^{\frac{1}{2}}ds\|\nabla\Phi\|_{L^2(B_{2\sqrt{t}}(x))}+\\
				&\hspace{4cm}C\sum_{j=1}^{\infty}\int_0^{t/2}\int_{B(x,2^{j+1}\sqrt{t})\setminus B(x,2^j\sqrt{t})}\dfrac{|n(y,s)\nabla\Phi(y)|dyds}{(|x-y|+(t-s)^{1/2})^N}\\
				&\leq Ct^{\frac{1-N}{2}}\bigg(\int_0^{t/2}\int_{B_{2\sqrt{t}}(x)}|n(y,s)|^2dyds\bigg)^{\frac{1}{2}}\|\nabla\Phi\|_{L^2(B_{2\sqrt{t}}(x))}+\\
				&\hspace{1.cm} Ct^{\frac{1-N}{2}}\sum_{j=1}^{\infty}2^{-jN}\bigg(\int_0^{t/2}\int_{B_{2^{j+1}\sqrt{t}}(x)}|n(y,s)|^2dyds\bigg)^{\frac{1}{2}}\|\nabla\Phi\|_{L^2(B_{2^{j+1}\sqrt{t}}(x))}.
			\end{align*}
			This clearly implies that
			\begin{align*}
				\sup_{0<t\leq T} t^{\frac{1}{2}}\big\|\mathscr{L}^1_{\Phi}(n)(t)\big\|_{L^{\infty}(\rn)}   &\leq C\|\nabla\Phi\|_{M_{2,N-2}(\rn)}\|n\|_{\X_{2,T}}.
			\end{align*}
			On the other hand, using \eqref{Oseen} with $|\alpha|=1$ we find that
			\begin{align*}
				\big|[k_2(\cdot,t)\ast (n(s)\nabla \Phi)](x)\big|&=\bigg|\int_{\rn}k_2(x-y,t)(n(s)\nabla \Phi)(y)dy\bigg|\\
				&\leq \int_0^{\infty}\bigg|\dfrac{dk_2(r,t)}{dr}\bigg|\bigg(\int_{B_r(x)}|n(y,s)\nabla \Phi(y)|dy\bigg)dr\\
				&\leq Ct^{-N/2-1/2}\int_{0}^{\infty}(1+t^{-1/2}r)^{-(N+1)}r^{N/2}\|n(s)\nabla \Phi\|_{L^2(B_r(x))}dr\\
				&\leq Ct^{-1/2}\|n(s)\nabla \Phi\|_{M_{2,N-2}(\rn)}\int_{0}^{\infty}\dfrac{r^{N-1}}{(1+r)^{N+1}}dr.
			\end{align*}
			Thus,
			\begin{align*}
				|\mathscr{L}^2_{\Phi}(n)(x,t)|&\leq \bigg|\int^{t}_{t/2}e^{(t-s)\Delta}\textbf{P}(n\nabla\Phi)(y,s)ds\bigg|\\
				&\leq \int^{t}_{t/2}\bigg\|e^{(t-s)\Delta}\textbf{P}(n\nabla\Phi)(\cdot,s)\bigg\|_{L^{\infty}(\rn)}ds\\
				&\leq C\int^{t}_{t/2}(t-s)^{-\frac{1}{2}}\|n(s,\cdot)\nabla\Phi\|_{M_{2,N-2}(\rn)}ds\\
				&\leq C\sup_{0<t\leq T}t\|n(t)\|_{L^{\infty}(\rn)}\|\nabla\Phi\|_{M_{2,N-2}(\rn)}\int^{t}_{t/2}(t-s)^{-\frac{1}{2}}s^{-1}ds\\
				&\leq Ct^{-\frac{1}{2}}\|n\|_{\X_{2,T}}\|\nabla\Phi\|_{M_{2,N-2}(\rn)}.
			\end{align*}
		\end{step3}
		This gives \eqref{lastbd-B3}, concludes Step 3 and thus the proof of Lemma \ref{lem:bilinear-est}.
	\end{proof}
	Now we turn to the proof of Theorem \ref{thm:GWP2}. Reformulate  \eqref{Sd-eq} into the following system whose solutions are referred to as mild solutions  
	\begin{align}\label{Duhamel-D-CNS}
		\begin{cases}
			\displaystyle c=e^{t\Delta}c_0-\int_0^te^{(t-s)\Delta}(c n+u\cdot\nabla c)(\cdot,s)ds\\
			\displaystyle n=e^{t\Delta}n_0-\int_0^te^{(t-s)\Delta}\nabla\cdot(un+n\nabla c+ n\nabla v)(\cdot,s)ds\\
			\displaystyle v=e^{-\kappa t}e^{t\Delta}v_0-\int_0^te^{-\kappa(t-s)}e^{(t-s)\Delta} (u\cdot \nabla v)(\cdot,s)ds-\int_0^te^{-\kappa(t-s)}e^{(t-s)\Delta}n(\cdot,s)ds\\
			\displaystyle u=e^{t\Delta}u_0-\int_0^te^{(t-s)\Delta}\textbf{P}\nabla\cdot (u\otimes u)(\cdot,s)ds-\int_0^te^{(t-s)\Delta} \textbf{P}(n \Psi)(\cdot,s)ds.
		\end{cases}    
	\end{align}
	In order to prove the well-posedness of Syst. \ref{Duhamel-D-CNS}, one needs in addition to Lemma \ref{lem:bilinear-est} another auxiliary result about the mapping properties of  the linear operator  
	$\mathscr{L}$ and the bilinear map $B_5$ respectively given by  
	\begin{equation*}
		\mathscr{L}n(t)=\int_0^te^{-\kappa(t-s)}e^{(t-s)\Delta}n(\cdot,s)ds, \hspace{0.2cm}t>0    
	\end{equation*}
	and 
	\begin{equation*}
		B_4(u,v)(t)=\int_0^te^{-\kappa(t-s)}e^{(t-s)\Delta}(u\cdot \nabla v)(\cdot,s)ds, \hspace{0.2cm}t>0,\hspace{0.2cm}\kappa>0.    
	\end{equation*}
	\begin{lemma}\label{lem:DCNS} Let $\kappa>0$ and $0<T\leq \infty$. The linear operator $\mathscr{L}$ is continuous from $\X_{2,T}$ to $\X_{1,T}$ and  there exists $C>0$ such that 
		\begin{align}\label{B5}
			\|\mathscr{L}n\|_{\X_{1,T}}\leq C\|n\|_{\X_{2,T}}   
		\end{align}
		for any $n\in \X_{2,T}$. Moreover, if $v\in \X_{1,T}$ and $u \in \X_{3,T}$, then $B_4(u,v)\in \X_{1,T}$ and  
		\begin{align}\label{}
			\|B_4(u,v)\|_{\X_{1,T}}\leq C\|u\|_{\X_{3,T}}\|v\|_{\X_{1,T}}.    
		\end{align}
	\end{lemma}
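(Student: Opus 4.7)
The strategy rests on two simple observations. First, since $\kappa>0$, the kernel of $e^{-\kappa(t-s)}e^{(t-s)\Delta}$ is pointwise dominated by the Gaussian heat kernel (the factor $e^{-\kappa(t-s)}\leq 1$ being harmlessly absorbed), so every kernel estimate developed in Step 1 of Lemma \ref{lem:bilinear-est} carries over verbatim. Second, if $v\in\X_{1,T}$ then $\nabla v$ already satisfies the two defining inequalities of $\X_{3,T}$, namely the weighted pointwise bound $\sup_{0<t\leq T}t^{1/2}\|\nabla v(t)\|_{L^{\infty}(\rn)}\leq \|v\|_{\X_{1,T}}$ and the Carleson-type bound $\sup_{x,0<R\leq T^{1/2}}\big(|B(x,R)|^{-1}\int_0^{R^2}\int_{B(x,R)}|\nabla v|^2\big)^{1/2}\leq \|v\|_{\X_{1,T}}$. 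In particular $\|\nabla v\|_{\X_{3,T}}\leq \|v\|_{\X_{1,T}}$.

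For the bound on $\mathscr{L}n$, the plan is to reproduce the argument for $B_1(w,n)$ from Step 1 of Lemma \ref{lem:bilinear-est} in the degenerate case $w\equiv 1$. Each of the three pieces of $\|\mathscr{L}n\|_{\X_{1,T}}$ -- the $L^{\infty}$-bound, the weighted bound on $t^{1/2}\|\nabla \mathscr{L}n(t)\|_{L^{\infty}}$, and the Carleson seminorm on $\nabla \mathscr{L}n$ -- is obtained by first splitting the time integral at $t/2$, then decomposing the spatial integral as $\int_{B_{2\sqrt{t}}(x)}+\int_{B^c_{2\sqrt{t}}(x)}$. The local part uses Young's convolution inequality together with Lemma \ref{lem:bound-E} (with $\alpha=\beta=1/2$, $p=2$), while the nonlocal part is dyadically summed over the annuli $A_j(x)=B_{2(j+1)\sqrt{t}}(x)\setminus B_{2j\sqrt{t}}(x)$ by means of the Gaussian decay. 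The constant $w\equiv 1$ passes trivially through every step, so the resulting inequality $\|\mathscr{L}n\|_{\X_{1,T}}\leq C\|n\|_{\X_{2,T}}$ is obtained.

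For $B_4(u,v)$, I would write $u\cdot\nabla v=\sum_{i=1}^N u_i\,\partial_i v$. By the observation above, each $\partial_i v$ lies in $\X_{3,T}$ with $\|\partial_i v\|_{\X_{3,T}}\leq \|v\|_{\X_{1,T}}$, while each component $u_i\in\X_{3,T}$. Therefore $B_4(u,v)$ is, after absorbing the factor $e^{-\kappa(t-s)}\leq 1$, a finite sum of scalar instances of the bilinear form $B_1(u_i,\partial_i v)$ already controlled in Lemma \ref{lem:bilinear-est} by the mapping $B_1:\X_{3,T}\times\X_{3,T}\to\X_{1,T}$. This directly yields
\[\|B_4(u,v)\|_{\X_{1,T}}\leq C\sum_{i=1}^N \|u_i\|_{\X_{3,T}}\|\partial_i v\|_{\X_{3,T}}\leq C\|u\|_{\X_{3,T}}\|v\|_{\X_{1,T}}.\]

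The only subtlety to verify carefully is the Carleson seminorm on $\nabla \mathscr{L}n$: since we differentiate the kernel directly rather than pass a divergence through a product, the nonlocal contribution relies exclusively on the pointwise decay $|\nabla g(y,t)|\leq C(|y|^2+t)^{-(N+1)/2}$ combined with the $|B|^{2/N-1}$-weighted local $L^2$-control built into the $\X_{2,T}$-norm. This is however precisely the computation carried out in Step 1 of Lemma \ref{lem:bilinear-est} for the terms $B^1_1$ and $B^2_{11}$, whose $\ell^1$-summable geometric factor $\sum_j j^{N-1}e^{-2j^2}$ controls the far field. Consequently, no ingredient beyond those already developed in Lemma \ref{lem:bilinear-est} is required, and the proof reduces to a bookkeeping exercise.
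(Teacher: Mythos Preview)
Your approach is correct and somewhat more economical than the paper's. The paper redoes the kernel estimates from scratch for both $\mathscr{L}n$ and $B_4(u,v)$: it splits the time integral at $t/2$, decomposes the spatial integral into near and far parts, and carries the factor $e^{-\kappa(t-s)}$ explicitly through each computation, exactly parallel to Step~1 of Lemma~\ref{lem:bilinear-est}. You instead observe two reductions: (i) since $w\equiv 1\in\X_{1,T}$ with $\|1\|_{\X_{1,T}}=1$, the bound on $\mathscr{L}n$ follows from the already established mapping $B_1:\X_{1,T}\times\X_{2,T}\to\X_{1,T}$ applied to the pair $(1,n)$; (ii) since $\|\nabla v\|_{\X_{3,T}}\leq\|v\|_{\X_{1,T}}$, the bound on $B_4(u,v)$ follows from the mapping $B_1:\X_{3,T}\times\X_{3,T}\to\X_{1,T}$ applied componentwise to $(u_i,\partial_i v)$. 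Both reductions are legitimate because every inequality used in Step~1 of Lemma~\ref{lem:bilinear-est} is ultimately a pointwise or $L^1$ kernel bound, and the extra factor $e^{-\kappa(t-s)}\leq 1$ can only improve such bounds. Your route avoids repeating several pages of computation; the paper's version has the minor advantage of being self-contained at the point of use and of displaying the $\kappa$-dependence explicitly.
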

	This Lemma is proved in a similar fashion as before. We include the details for the sake of completeness.  
	\begin{proof}
		Let $n\in \X_{2,T}$, we have 
		\begin{align*}
			\mathscr{L}n(x,t)&=\bigg(\int_0^{t/2}+\int_{t/2}^t\bigg)e^{-\kappa(t-s)}e^{(t-s)\Delta}n(y,s)dyds:=J_1(x,t)+J_2(x,t).   
		\end{align*}
		Using only the property $g(\cdot,t)\in L^1(\rn)$ for every $t>0$, we bound $J_2$ as follows:
		\begin{align*}
			|J_2(x,t)|&=\int_{t/2}^te^{-\kappa(t-s)}\int_{\rn}g(x-y,t-s)|n(y,s)|dyds\\
			&\leq C\sup_{t>0}t\|n(t)\|_{L^{\infty}(\rn)}\int_{t/2}^ts^{-1}e^{-\kappa(t-s)}ds\leq C\|n\|_{\X_{2,T}}.   
		\end{align*}
		Regarding $J_1$, we further split it into two terms, $J_{1}=J_{11}+J_{12}$ with   
		\begin{align*}
			J_{11}(x,t)= \int_0^{t/2}\int_{B_{2\sqrt{t}}(x)}e^{-\kappa(t-s)}g(x-y,t-s)n(y,s)dyds,  
		\end{align*}
		\begin{align*}
			J_{12}(x,t)&= \int_0^{t/2}\int_{B^c_{2\sqrt{t}}(x)}e^{-\kappa(t-s)}g(x-y,t-s)n(y,s)dyds.    
		\end{align*}
		By H\"older's inequality, 
		\begin{align*}
			|J_{11}(x,t)|&\leq C\big\|e^{-\kappa\cdot }g(\cdot,\cdot)\big\|_{L^{2}(B_{2\sqrt{t}}(0)\times [t/2,t])}\bigg(\int_0^{t/2}\int_{B_{2\sqrt{t}}(x)}|n(y,s)|^2dyds\bigg)^{\frac{1}{2}}\\
			&\leq Ct^{-N/4+1/2}\bigg(\int_0^{t/2}\int_{B_{2\sqrt{t}}(x)}|n(y,s)|^2dyds\bigg)^{\frac{1}{2}}\leq C\|n\|_{\X_{2,T}}.
		\end{align*}
		With $A_j(x)$ as previously defined, we have 
		\begin{align*}
			|J_{12}(x,t)|&\leq \int_0^{t/2}\int_{B^c_{2\sqrt{t}}(x)}e^{-\kappa(t-s)}g(x-y,t-s)|n(y,s)|dyds\\
			&\leq C\sum_{j=1}^{\infty}e^{-\kappa(t-s)}e^{-2j^2}\sum_{ z\in A_j(x)\cap \sqrt{t}\mathbb{Z}^N}\int_0^{t/2}\int_{B_{\sqrt{t}}(z)}e^{-\kappa(t-s)}(t-s)^{-n/2}|n(y,s)|dyds\\
			&\leq C\sum_{j=1}^{\infty}e^{-2j^2}\sum_{z\in A_j(x)\cap \sqrt{t}\mathbb{Z}^N}t^{\frac{2-N}{4}}e^{-\frac{\kappa}{2t}}\bigg(\int_0^{t/2}\int_{B_{\sqrt{t}}(z)}|n(y,s)|^2dyds\bigg)^{1/2}\\
			&\leq C\bigg(\sum_{j=1}^{\infty}j^{N-1}e^{-2j^2}\bigg)\sup_{z\in \rn}\bigg(t^{2-N}\int_0^{t/2}\int_{B_{\sqrt{t}}(z)}|n(y,s)|^2dyds\bigg)^{1/2}\\
			&\leq C\|n\|_{\X_{2,T}}.
		\end{align*}
		The pointwise gradient estimate $\displaystyle \sup_{0<t\leq T}t^{\frac{1}{2}}\|\mathscr{L}n(\cdot,t)\|_{L^{\infty}(\rn)}\leq C\|n\|_{\X_{2,T}}$ follows the same steps using the decay of the kernel $k_1(t)=\nabla_xg(t)$. Next, we show the energy-type estimate
		\begin{align*}
			[\mathscr{L}n]_{Car}:=\sup_{x\in \rn,0<t\leq T}\bigg(|B(x,\sqrt{t})|^{-1}\int^t_0\int_{B(x,\sqrt{t})}|\nabla\mathscr{L}n(y,s)|^2dyds\bigg)^{1/2}\leq C\|n\|_{\X_{2,T}}.    
		\end{align*}
		Let $n\in \X_{2,T}$ and for  $(x,t)\in \rn\times (0,T]$ write (in the sense of distributions)
		\begin{align*}
			\nabla \mathscr{L}n(x,t)&=\int_0^s\int_{\rn}e^{-\kappa(s-\sigma)}k_1(y-z,s-\sigma)(\textbf{1}_{B(x,2\sqrt{t})}n)(z,\sigma)dzd\sigma+\\
			&\hspace{2cm}\int_0^s\int_{\rn}e^{-\kappa(s-\sigma)}k_1(y-z,s-\sigma)(1-\textbf{1}_{B(x,2\sqrt{t})})n(z,\sigma)dzd\sigma\\
			&:=I_{1}(n)(y,s)+I_{2}(n)(y,s). 
		\end{align*}
		As such, we have that
		\begin{align*}
			[\mathscr{L}n]_{Car}&=\sup_{x\in \rn,0<t\leq T}\bigg(|B(x,\sqrt{t})|^{-1}\int^t_0\int_{B(x,\sqrt{t})}|I_1(n)(y,s)|^2dyds\bigg)^{1/2}+\\
			&\hspace{3cm}\sup_{x\in \rn,0<t\leq T}\bigg(|B(x,\sqrt{t})|^{-1}\int^t_0\int_{B(x,\sqrt{t})}|I_2(n)(y,s)|^2dyds\bigg)^{1/2}\\
			&=M_1+M_2.    
		\end{align*}
		Using the following $L^2$-bound 
		\begin{align*}
			\|I_1(n)(\cdot,s)\|_{L^2(\rn)}&\leq C\int^s_0e^{-\kappa(s-\sigma)}(s-\sigma)^{-1/2}\|n\textbf{1}_{B(x,2\sqrt{t})}(\cdot,\sigma)\|_{L^2(\rn)}d\sigma,
		\end{align*}
		the bound on $M_1$ follows from  an application of Lemma \ref{lem:bound-E} with $p=2$, $\alpha=\beta=1/2$. Indeed,
		\begin{align*}
			M_1&\leq \sup_{x\in \rn,0<t\leq T}\bigg(|B(x,\sqrt{t})|^{-1}\int^t_0\|K_1(n)(\cdot,s)\|_{L^2(\rn)}^2ds\bigg)^{1/2}\\
			&\leq C\sup_{x\in \rn,0<t\leq T}\bigg(|B(x,\sqrt{t})|^{-1}t\int^t_0\|n\textbf{1}_{B(x,2\sqrt{t})}(s)\|^2_{L^2(\rn)}ds\bigg)^{1/2}\\
			&\leq C\sup_{x\in \rn,0<t\leq T}\bigg(|B(x,\sqrt{t})|^{-(1-2/N)}\int^t_0\int_{B(x,2\sqrt{t})}|n(y,s)|^2dyds\bigg)^{1/2}\\
			&\leq C\|n\|_{\X_{2,T}}.
		\end{align*}
		Also, for $y\in B(x,\sqrt{t})$ and $z\in \rn\setminus B(x,2\sqrt{t})$, we have $|y-z|\geq \frac{1}{2}|x-z|$. Thus  for $0<s\leq t$, 
		\begin{align*}
			|I_2(n)(y,s)|&\leq \int_0^s\int_{\rn\setminus B(x,2\sqrt{t})}e^{-\kappa(s-\sigma)}|k_1(y-z,s-\sigma)||n(z,\sigma)|dzd\sigma\\
			&\leq C\int_0^t\int_{|x-z|\geq 2\sqrt{t}}\dfrac{e^{-\kappa(s-\sigma)}}{(|x-y|+(s-\sigma)^{1/2})^{N+1}}|n(z,\sigma)|dzd\sigma\\
			&\leq C\int_0^t\int_{|x-z|\geq 2\sqrt{t}}\dfrac{|n(z,\sigma)|dzd\sigma}{|x-y|^{N+1}}.
		\end{align*}
		Performing a similar covering argument as before we obtain the estimate $M_2\leq C\|n\|_{\X_{2,T}}$. To conclude the proof we show that
		\begin{align}
			\|B_4(u,v)\|_{\X_{1,T}}\leq C\|u\|_{\X_{3,T}}\|v\|_{\X_{1,T}}    
		\end{align}
		for all $u\in \X_{3,T}$ and $v\in \X_{1,T}$.
		Put
		\begin{align*}
			N_{1}(u,v)(x,t)&=\int_0^{t/2}\int_{\rn}e^{-\kappa(t-s)} g(x-y,t-s)(u\cdot \nabla v)(y,s)dyds\\
			N_{2}(u,v)(x,t)&=\int_{t/2}^t\int_{\rn}e^{-\kappa(t-s)}g(x-y,t-s)(u\cdot \nabla v)(y,s)dyds.
		\end{align*}
		We have 
		\begin{align*}
			|N_{2}(u,v)(x,t)|&=\int_{t/2}^t\int_{\rn}e^{-\kappa(t-s)}g(x-y,t-s)|(u\cdot \nabla v)(y,s)|dyds\\
			&\leq C\sup_{0<t\leq T}t^{\frac{1}{2}}\|u(t)\|_{L^{\infty}(\rn)}\sup_{0<t\leq T}t^{\frac{1}{2}}\|\nabla v(t)\|_{L^{\infty}(\rn)}t^{-1}\int_{0}^{t/2}e^{-\kappa s}\|g(s)\|_{L^1(\rn)}ds\\
			&\leq C\|u\|_{\X_{3,T}}\|v\|_{\X_{1,T}}.
		\end{align*}
		We estimate $N_1$ using H\"older's inequality: 
		\begin{align*}
			|N_{1}(u,v)(x,t)|&=\int_0^{t/2}\int_{B_{2\sqrt{t}}(x)}e^{-\kappa(t-s)}g(x-y,t-s)|(u\cdot \nabla v)(y,s)|dyds+\\
			&\hspace{2cm}\int_0^{t/2}\int_{\rn\setminus B_{2\sqrt{t}}(x)}e^{-\kappa(t-s)}g(x-y,t-s)|(u\cdot \nabla v)(y,s)|dyds\\
			&\leq Ct^{-N/2}\bigg(\int_0^{t/2}\int_{B_{2\sqrt{t}}(x)}|u(y,s)|^2dyds\bigg)^{\frac{1}{2}}\bigg(\int_0^{t/2}\int_{B_{2\sqrt{t}}(x)}|\nabla v(y,s)|^2dyds\bigg)^{\frac{1}{2}}+\\
			& \hspace{4cm}C\sum_{j=1}^{\infty}\int_0^{t/2}\int_{A_j(x)}e^{-\kappa(t-s)}\dfrac{e^{-\frac{|x-y|^2}{4(t-s)}}}{(t-s)^{N/2}}|(u\cdot \nabla v)(y,s)|dyds\\
			&\leq C\|u\|_{\X_{3,T}}\|v\|_{\X_{1,T}}+\\
			&\hspace{1.2cm}C\sum_{j=1}^{\infty}e^{-2j^2}\sum_{z\in A_j(x)\cap \sqrt{t}\mathbb{Z}^N}\int_0^{t/2}\int_{B_{\sqrt{t}}(z)}e^{-\kappa (t-s)}(t-s)^{-\frac{N}{2}}|(u\cdot \nabla v)(y,s)|dyds\\
			&\leq C\|u\|_{\X_{1,T}}\|v\|_{\X_{1,T}}+\\
			&\hspace{2cm}C\sum_{j=1}^{\infty}e^{-2j^2}\sum_{z\in A_j(x)\cap \sqrt{t}\mathbb{Z}^N}t^{\frac{-N}{2}}e^{-\frac{\kappa t}{2}}\bigg(\int_0^{t/2}\int_{B_{\sqrt{t}}(z)}|u(y,s)|^2dyds\bigg)^{\frac{1}{2}}\cdot\\
			&\hspace{7.4cm}\bigg(\int_0^{t/2}\int_{B_{\sqrt{t}}(z)}|\nabla v(y,s)|^2dyds\bigg)^{\frac{1}{2}}\\
			&\leq C\|u\|_{\X_{3,T}}\|v\|_{\X_{1,T}}+\bigg(\sum_{j=1}^{\infty}j^{N-1}e^{-2j^2}\bigg)\|u\|_{\X_{3,T}}\|v\|_{\X_{1,T}}\leq C\|u\|_{\X_{3,T}}\|v\|_{\X_{1,T}}.
		\end{align*}
		The gradient estimate $\displaystyle\sup_{0<t\leq T}t^{1/2}\|\nabla B_4(u,v)\|_{L^{\infty}(\rn)}\leq C\|u\|_{\X_{3,T}}\|v\|_{\X_{1,T}}$ is obtained in a similar fashion. It remains to establish the $L^2$-gradient estimate 
		\begin{align}\label{Carl-B5}
			[B_4(u,v)]_{Car}=\sup_{x\in \rn,0<t\leq T}|B(x,\sqrt{t})|^{-1}\int_0^{t}\int_{B(x,\sqrt{t})}|\nabla B_4(u,v)(y,s)|^2dyds &\leq C\|u\|_{\X_{3,T}}\|v\|_{\X_{1,T}}.
		\end{align}
		Set  
		\begin{equation*}
			B_{41}(u,v)(y,s)=\int^s_0\int_{\rn}e^{-\kappa(t-s)}k_1(y-z,s-\sigma)[(u\textbf{1}_{B(x,2\sqrt{t})})\cdot \nabla v](z,\sigma)dzd\sigma,  
		\end{equation*}
		and 
		\begin{equation*}
			B_{42}(u,v)(y,s)=\int^s_0\int_{\rn}e^{-\kappa(t-s)}k_1(y-z,s-\sigma)[(u\textbf{1}_{\rn\setminus B(x,2\sqrt{t})})\cdot\nabla v](z,\sigma)dzd\sigma.  
		\end{equation*}
		Then, by Young's convolution inequality one has
		\begin{align*}
			\|B_{41}&(u,v)(s)\|_{L^2(\rn)}\\
			&\leq C\int^s_0e^{-\kappa(t-s)}\big\|[(u\textbf{1}_{B(x,2\sqrt{t})})\cdot\nabla v](\cdot,\sigma)\big\|_{L^2(\rn)}\\
			&\leq C\sup_{s\in (0, T)}s^{\frac{1}{2}}\|\nabla v(s)\|_{L^{\infty}(\rn)}\int_0^se^{-\kappa(s-\sigma)}(s-\sigma)^{-\frac{1}{2}}\sigma^{-\frac{1}{2}}\|(u\textbf{1}_{B(x,2\sqrt{t})})(\sigma)\|_{L^2(\rn)}d\sigma\\
			&\leq C\|v\|_{\X_{1,T}}\int_0^s(s-\sigma)^{-\frac{1}{2}}\sigma^{-\frac{1}{2}}\|(u\textbf{1}_{B(x,2\sqrt{t})})(\sigma)\|_{L^2(\rn)}d\sigma
		\end{align*}
		and thus by Lemma \ref{lem:bound-E} with $p=2$, $\alpha=\beta=1/2$, it holds that
		\begin{align}\label{B51}
			\nonumber[B_{41}(u,v)]_{Car}&=\sup_{x\in \rn,0<t\leq T}|B(x,\sqrt{t})|^{-1}\int_0^{t}\int_{B(x,\sqrt{t})}|B_{51}(u,v)(y,s)|^2dyds\\
			\nonumber&\leq C\sup_{x\in \rn,0<t\leq T}|B(x,\sqrt{t})|^{-1}\int_0^{t}\|B_{52}(u,v)(\cdot,s)\|_{L^2(\rn)}^2ds\\
			\nonumber&\leq C\|v\|_{\X_{1,T}}\sup_{x\in \rn,0<t\leq T}|B(x,\sqrt{t})|^{-1}\int_0^{t}\|\textbf{1}_{B(x,2\sqrt{t})}u(\cdot,s)\|^2_{L^2(\rn)}ds\\
			&\leq C\|u\|_{\X_{3,T}}\|v\|_{\X_{1,T}}.
		\end{align}
		On the other hand,
		\begin{align*}
			|B_{42}(u,v)(y,s)|
			&= \int^s_0\int_{\rn\setminus B(x,2\sqrt{t})}e^{-\kappa(t-s)}|k_1(y-z,s-\sigma)|\big|[u\cdot\nabla v](z,\sigma)\big|dzd\sigma\\
			&\leq C\sum_{j=1}^{\infty}\int^s_0\int_{A_j(x)}\dfrac{e^{-\kappa(s-\sigma)}(u\cdot \nabla v)(z,\sigma)dzd\sigma}{(|y-z|+(s-\sigma)^{1/2})^{N+1}}\\
			&\leq C\sum_{j=1}^{\infty}\int^s_0\int_{A_j(x)}|y-z|^{-(N+1)}e^{-\kappa(s-\sigma)}(u\cdot \nabla v)(z,\sigma)dzd\sigma\\
			&\leq C\sum_{j=1}^{\infty}j^{-(N+1)}\sum_{z\in A_j(x)\cap \sqrt{t}\mathbb{Z}^N}t^{\frac{-(N+1)}{2}}\bigg(\int_0^{t}\int_{B_{\sqrt{t}}(z)}|(u\cdot \nabla v)(y,s)|dyds\bigg)^{\frac{1}{2}}\\
			&\leq C\bigg(\sum_{j=1}^{\infty}j^{-2}\bigg)t^{-1/2}\|u\|_{\X_{3,T}}\|v\|_{\X_{1,T}}
		\end{align*}
		where the last estimate follows from H\"older's inequality. This implies that 
		\[[B_{42}(u,v)]_{Car}\leq C\|u\|_{\X_{3,T}}\|v\|_{\X_{1,T}}.\]
		Combining this with \eqref{B51}, we obtain \eqref{Carl-B5}. The proof of Lemma \ref{lem:DCNS} is now complete.
	\end{proof}
	
	\section{Proofs of main results}\label{sec:4}
	We mainly present the proofs of the local well-posedness results since the existence of global-in-time solutions is a direct consequence of Lemmas \ref{lem:smooth-effect}, \ref{lem:bilinear-est} and the contraction mapping principle. Also, the uniqueness criterion in Theorem \ref{thm:uniqueness} and \ref{thm:GWP2} may be established via similar arguments -- we therefore only present the proof of the former.  
	\subsection*{Proof of Theorem \ref{thm:LWP}}Let $d_0\in UC(\rn)$ and $\varGamma_{\delta_0}$ as in Lemma \ref{lem:UC-pert}. Make the ansatz $\overline{c}=c-\varGamma_{\delta_0}$ and observe that $\overline{c}$ solves the Cauchy problem 
	\begin{align*}
		\partial_t\overline{c}-\Delta\overline{c}&=-(\overline{c}-\varGamma_{\delta_0})n-u\cdot(\nabla \overline{c}+\nabla \varGamma_{\delta_0})-\Delta \varGamma_{\delta_0}\hspace{0.2cm}\mbox{in} \hspace{0.2cm}\rn\times \mathbb{R}^+\\
		\overline{c}(0)&=c_0-\varGamma_{\delta_0} \hspace{0.2cm}\mbox{on} \hspace{0.2cm}\rn.
	\end{align*}
	It is clear that the conclusion of Theorem \ref{thm:LWP} now follows from the well-posedness of the following system of equations 
	\begin{align}\label{Duhamel-CNS1}
		\begin{cases}
			\displaystyle\overline{c}=e^{t\Delta}\overline{c}_0-\int_0^te^{(t-s)\Delta}[(\overline{c}+\varGamma_{\delta_0}) n+u\cdot\nabla (\overline{c}+\nabla\varGamma_{\delta_0})+\Delta\varGamma_{\delta_0}](\cdot,s)ds\\
			\displaystyle n=e^{t\Delta}n_0-\int_0^te^{(t-s)\Delta}\nabla\cdot[n\nabla (\overline{c}+\varGamma_{\delta_0})+nu](\cdot,s)ds\\
			\displaystyle u=e^{t\Delta}u_0-\int_0^te^{(t-s)\Delta}\textbf{P}\nabla\cdot (u\otimes u)(\cdot,s)ds-\int_0^te^{(t-s)\Delta} \textbf{P}(n\nabla \Phi)(\cdot,s)ds.
		\end{cases}    
	\end{align}
	Next, define the maps 
	\begin{align*}
		\mathbb{F}_1(\overline{c},n,u,\varGamma_{\delta_0})&=B_1(\overline{c}+\varGamma_{\delta_0},n)+B_1(u,\nabla(\overline{c}+\varGamma_{\delta_0}))+\int_{0}^te^{(t-s)\Delta}\Delta\varGamma_{\delta_0}ds\\
		\mathbb{F}_2(\overline{c},n,u,\varGamma_{\delta_0})&=B_2(n,\nabla(\overline{c}+\varGamma_{\delta_0}))+B_2(n,u)\\
		\mathbb{F}_3(n,u,\nabla\Phi)&=B_3(u,u)+\mathscr{L}_{\Phi}(n)
	\end{align*}
	where $B_j$, $j=1,2,3$ are as in Section \ref{sec:3}. Combining Lemmas \ref{lem:UC-pert} and \ref{lem:bilinear-est} we obtain the following result.
	\begin{proposition}\label{prop:self-contraction}
		Let $\varGamma_{\delta_0}=e^{\delta^2_0\Delta}d_0$. Let $R>0$ and put $T_0=\min(\delta_0,R)$. Given $[\overline{c},n,u]$ in $\X_{T_0^2}$ and $\nabla\Phi\in M_{2,N-2}$, one has 
		\begin{align}
			\|\mathbb{F}_1(\overline{c},n,u,\varGamma_{\delta_0})\|_{\X_{1,T^2_0}}&\leq C_1(\|\overline{c}\|_{\X_{1,T_0^2}}+1)\|n\|_{\X_{2,T^2_0}}+C_1\|u\|_{\X_{3,T^2_0}}(\|\overline{c}\|_{\X_{1,T^2_0}}+\varepsilon_0)+C_1\varepsilon_0\\
			\|\mathbb{F}_2(\overline{c},n,u,\varGamma_{\delta_0})\|_{\X_{2,T^2_0}}&\leq C_2(\|\overline{c}\|_{\X_{1,T_0^2}}+\|u\|_{\X_{3,T^2_0}}+\varepsilon_0)\|n\|_{\X_{2,T^2_0}}\\
			\|\mathbb{F}_3(n,u,\nabla\Phi)\|_{\X_{3,T^2_0}}&\leq C_3(\|u\|^2_{\X_{3,T_0^2}}+\|n\|_{\X_{2,T^2_0}}\|\nabla\Phi\|_{M_{2,N-2}(\rn)})
		\end{align}
		for some constants $C_1,C_2,C_3>0$.
		Moreover, for any $[\overline{c}_1,n_1,u_1]\in \X_{T^2_0}$, it holds that 
		\begin{align*}
			\big\|\mathbb{F}_1(\overline{c},n,u,\varGamma_{\delta_0})-\mathbb{F}_1(\overline{c}_1,n_1,u_1,\varGamma_{\delta_0})\big\|_{\X_{1,T^2_0}}&\leq C_1(\|\overline{c}\|^2_{\X_{1,T_0^2}}+1)\|n-n_1\|_{\X_{2,T_0^2}}+\\
			C_1\|n_1\|_{\X_{2,T_0^2}}&(\|\overline{c}-\overline{c}_1\|^2_{\X_{1,T_0^2}})+C_1(\|\overline{c}\|_{\X_{1,T_0^2}}+\varepsilon_0)\|u-u_1\|_{\X_{3,T_0^2}}\\
			\big\|\mathbb{F}_2(\overline{c},n,u,\varGamma_{\delta_0})-\mathbb{F}_2(\overline{c}_1,n_1,u_1,\varGamma_{\delta_0})\big\|_{\X_{2,T^2_0}}&\leq C_2(\varepsilon_0\|\overline{c}_1\|_{\X_{1,T_0^2}}+\|u\|_{\X_{3,T_0^2}})\|n-n_1\|_{\X_{2,T_0^2}}+\\
			&C_2(\|n_1\|_{\X_{2,T_0^2}}\|u-u_1\|_{\X_{3,T_0^2}}+\|n\|_{\X_{2,T_0^2}}\|\overline{c}-\overline{c}_1\|_{\X_{1,T_0^2}})\\
			\big\|\mathbb{F}_3(n,u,\nabla\Phi)-\mathbb{F}_3(n_1,u_1,\nabla\Phi)\big\|_{\X_{3,T^2_0}}&\leq C_3(\|u\|_{\X_{3,T_0^2}}+\|u_1\|_{\X_{3,T_0^2}})\|u-u_1\|_{\X_{3,T_0^2}}+\\
			&\hspace{3.cm}C_3\|n-n_1\|_{\X_{2,T_0^2}}\|\nabla \Phi\|_{M_{2,N-2}(\rn)}.
		\end{align*}
	\end{proposition}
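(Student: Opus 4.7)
The plan is to reduce each of the six estimates, direct and Lipschitz, to applications of Lemma~\ref{lem:bilinear-est} on the building blocks $B_1,B_2,B_3,\mathscr{L}_\Phi$, while isolating the contributions of the time-independent function $\varGamma_{\delta_0}$ via Lemma~\ref{lem:UC-pert} and the constraint $T_0\leq \delta_0$.

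First I would measure $\varGamma_{\delta_0}$ itself in the relevant norms. Because $\varGamma_{\delta_0}$ does not depend on time, the $L^\infty$-component of $\|\varGamma_{\delta_0}\|_{\X_{1,T_0^2}}$ is controlled by a constant via Lemma~\ref{lem:UC-pert}, whereas the derivative components carry smallness: $T_0^{1/2}\|\nabla\varGamma_{\delta_0}\|_{L^\infty}\leq T_0\cdot C\varepsilon_0/\delta_0\leq C\varepsilon_0$ since $T_0\leq\delta_0$, and the Carleson integral is bounded pointwise by $\Ball^{-1}\cdot |B(x,R)|\cdot R^2\cdot \|\nabla\varGamma_{\delta_0}\|_{L^\infty}^2\leq C\varepsilon_0^2$ for $R\leq T_0$. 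This yields the two working estimates
\[
\|\varGamma_{\delta_0}\|_{\X_{1,T_0^2}}\leq C,\qquad \|\nabla\varGamma_{\delta_0}\|_{\X_{3,T_0^2}}\leq C\varepsilon_0.
\]

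Next, I would split each nonlinear term by bilinearity. For $\mathbb{F}_1$, the decomposition $B_1(\overline{c}+\varGamma_{\delta_0},n)=B_1(\overline{c},n)+B_1(\varGamma_{\delta_0},n)$ together with $B_1:\X_{1,T_0^2}\times\X_{2,T_0^2}\to\X_{1,T_0^2}$ yields the $C(\|\overline{c}\|_{\X_{1,T_0^2}}+1)\|n\|_{\X_{2,T_0^2}}$ contribution. The term $B_1(u,\nabla(\overline{c}+\varGamma_{\delta_0}))$ is handled by $B_1:\X_{3,T_0^2}\times\X_{3,T_0^2}\to\X_{1,T_0^2}$, using that $\nabla\overline{c}\in\X_{3,T_0^2}$ with norm bounded by $\|\overline{c}\|_{\X_{1,T_0^2}}$ (immediate from the definition of $[\,\cdot\,]_{\X_{1,T_0^2}}$) and the estimate for $\nabla\varGamma_{\delta_0}$ above. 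The inhomogeneous piece is treated via the identity
\[
\int_0^t e^{(t-s)\Delta}\Delta\varGamma_{\delta_0}\,ds=e^{t\Delta}\varGamma_{\delta_0}-\varGamma_{\delta_0},
\]
obtained from $\partial_s(e^{(t-s)\Delta}\varGamma_{\delta_0})=-\Delta e^{(t-s)\Delta}\varGamma_{\delta_0}$; for $t\leq T_0^2$, its $L^\infty$-norm is at most $t\|\Delta\varGamma_{\delta_0}\|_{L^\infty}\leq T_0^2\cdot C\varepsilon_0/\delta_0^2\leq C\varepsilon_0$, and its gradient is controlled in $L^\infty$ by $2\|\nabla\varGamma_{\delta_0}\|_{L^\infty}$ and in the Carleson semi-norm by the pointwise argument from Step~1, together contributing $C\varepsilon_0$ to $\|\mathbb{F}_1\|_{\X_{1,T_0^2}}$. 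The bound on $\mathbb{F}_2$ follows analogously from $B_2:\X_{2,T_0^2}\times\X_{3,T_0^2}\to\X_{2,T_0^2}$ applied to $\nabla \overline{c}$, $\nabla\varGamma_{\delta_0}$ and $u$, and the $\mathbb{F}_3$-estimate is immediate from Lemma~\ref{lem:bilinear-est} applied to $B_3$ and $\mathscr{L}_\Phi$.

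For the Lipschitz bounds I would invoke the standard telescoping identity $B(a,b)-B(a',b')=B(a-a',b)+B(a',b-b')$ term by term. Note that the additive forcing in $\mathbb{F}_1$ does not depend on $(\overline{c},n,u)$ and cancels in the difference, and the $\varGamma_{\delta_0}$-dependent contributions retain the same prefactors as in Step~1. The main technical obstacle throughout is the careful bookkeeping of the first step: each occurrence of $\varGamma_{\delta_0}$ or its derivatives must be made to enter with a factor of $\varepsilon_0$ (not merely $C$) wherever that smallness is claimed, since it is exactly these small factors that, once $\delta_0$ and hence $\varepsilon_0$ are chosen small enough, enable the contraction mapping argument used to prove Theorem~\ref{thm:LWP}.
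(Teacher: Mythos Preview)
Your approach is correct and coincides with the paper's: the proposition is obtained by combining Lemma~\ref{lem:UC-pert} with the bilinear/linear bounds of Lemma~\ref{lem:bilinear-est}, and you have simply spelled out the bookkeeping that the paper leaves implicit. One cosmetic slip: in your first display you write $T_0^{1/2}\|\nabla\varGamma_{\delta_0}\|_{L^\infty}$, but since the time horizon is $T_0^2$ the relevant quantity is $\sup_{0<t\le T_0^2}t^{1/2}\|\nabla\varGamma_{\delta_0}\|_{L^\infty}=T_0\|\nabla\varGamma_{\delta_0}\|_{L^\infty}$, which is exactly what you bound on the right, so the conclusion is unaffected.
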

	From Lemma \ref{lem:UC-pert}, we remark that $\|\overline{c}_0\|_{L^{\infty}(\rn)}\leq \|c_0-d_0\|_{L^{\infty}(\rn)}+C\varepsilon_0$ for some $C>0$. 
	Let $R>0$ fixed and $\overline{\varepsilon}_0>\varepsilon_0$. Assume that \[\|c_0-d_0\|_{L^{\infty}(\rn)}+\|n_0\|_{\mathscr{L}^{-1}_{2,N-2;R}(\rn)}+\|u_0\|_{BMO^{-1}_R(\rn)}<\overline{\varepsilon}_0,\]
	then by Lemma \ref{lem:smooth-effect}, it holds that 
	\begin{align}\label{eq:lin-est}
		\big\|[e^{t\Delta}\overline{c}_0,e^{t\Delta}n_0,e^{t\Delta}u_0]\big\|_{\X_{T^2_0}}\leq C_0\overline{\varepsilon}_0.    
	\end{align}
	Let $U=[\overline{c},n,u]$ and introduce the map
	\begin{align*}
		\mathbb{F}(U)=(e^{t\Delta}\overline{c}_0+\mathbb{F}_1(U,\varGamma_{\delta_0}),e^{t\Delta}n_0+\mathbb{F}_2(U,\varGamma_{\delta_0}),e^{t\Delta}u_0+\mathbb{F}_3(n,u,\nabla\Phi)).    
	\end{align*}
	Using Proposition \ref{prop:self-contraction} together with \eqref{eq:lin-est}, we have 
	\begin{align*}
		\|\mathbb{F}(U)\|_{\X_{T^2_0}}&\leq \|e^{t\Delta}\overline{c}_0,e^{t\Delta}n_0,e^{t\Delta}u_0)\|_{\X_{T^2_0}}+\|\mathbb{F}_1(U,\varGamma_{\delta_0})\|_{\X_{1,T^2_0}}+\|\mathbb{F}_2(U,\varGamma_{\delta_0})\|_{\X_{2,T^2_0}}+\\
		&\hspace{6cm}\big\|\mathbb{F}_3(e^{t\Delta}n_0+\mathbb{F}_2(U,\varGamma_{\delta_0}),u,\nabla \Phi)\big\|_{\X_{3,T^2_0}}\\
		&\leq (2C_0+C_1)\overline{\varepsilon}_0+((C_1+C_2+C_1C_2)\varepsilon_0+\varepsilon_0^2C_2C_3)\|U\|_{\X_{T^2_0}}+\\
		&\hspace{5cm}(2C_1+2C_2+C_3+2\varepsilon_0C_2(C_3+C_1))\|U\|^2_{\X_{T^2_0}}\\
		&\leq \overline{C}\overline{\varepsilon}_0
	\end{align*}
	for some $\overline{C}>0$ provided $\|U\|_{\X_{T^2_0}}\leq \overline{C}\overline{\varepsilon}_0$ and $\varepsilon_0$ is chosen sufficiently small.
	On the other hand, we similarly show that $\mathbb{F}$ is a contraction on $B_{\overline{C}\overline{\varepsilon}_0}=\{U\in \X_{T^2_0}:\|U\|_{\X_{T^2_0}}\leq \overline{C}\overline{\varepsilon}_0\}$. This implies that $\mathbb{F}$ has a unique fixed point in $B_{\overline{C}\overline{\varepsilon}_0}$ and concludes the proof of Theorem \ref{thm:LWP}.
	\subsection*{Proof of Proposition \ref{prop:mass-con}}
	We prove here that solutions constructed satisfy the expected qualitative properties. Starting with the mass preservation, assume that $n_0\in L^1(\rn)\cap \overline{V\mathscr{L}_{2,N-2}^{-1}}(\rn)$. We look for a local solution in the Banach space $\mathbb{Y}=C([0,T_{0}),L^1(\rn))\cap \X_{2,T_{0}^2}$  equipped with the norm 
	\begin{equation*}
		\|n\|_{\mathbb{Y}}=\sup_{0\leq t<T_{0}}\|n(t)\|_{L^1(\rn)}+\|n\|_{\X_{2,T^2_{0}}},    
	\end{equation*} as a fixed point of the operator 
	\begin{equation*}
		\mathscr{P}n(x,t)=(Sn_0)(x,t)-B_2(n,u+\nabla c)(x,t).    
	\end{equation*}
	Set $K=\|n_0\|_{L^1(\rn)}$ and for $r\geq 4K$, define the closed ball $B_r=\{n\in \mathbb{Y}:\|n\|_{\mathbb{Y}}\leq r\}$.
	By the heat smoothing effect, it holds that $\|(Sn_0)(\cdot,t)\|_{L^1(\rn)}=\|e^{t\Delta}n_0\|_{L^1(\rn)}\leq K$ and for $0<t<T_0$, we have
	\begin{align*}
		\|B_2(n,u+\nabla c)(t)\|_{L^1(\rn)}&\leq C\int_{0}^t(t-s)^{-1/N}\|n(u+\nabla c)(s)\|_{L^1(\rn)}\\
		&\leq C\int_{0}^t(t-s)^{-1/N}\|n(s)\|_{L^1(\rn)}\|(u+\nabla c)(s)\|_{L^{\infty}(\rn)}ds\\
		&\leq C\|n\|_{\mathbb{Y}}(\|u\|_{\X_{3,T^2_{0}}}+\|c\|_{\X_{1,T^2_{0}}})\int_{0}^t(t-s)^{-1/N}s^{-1/2}ds\\
		&\leq CT_{0}^{-1/N+1/2}\|n\|_{\mathbb{Y}}(\|u\|_{\X_{3,T^2_{0}}}+\|c\|_{\X_{1,T^2_{0}}})\int_{0}^1(1-s)^{-1/N}s^{-1/2}ds\\
		&\leq C'T_{0}^{-1/N+1/2}\|n\|_{\mathbb{Y}}.
	\end{align*}
	All together, we have 
	\begin{align*}
		\|[\mathscr{P}n](t)\|_{L^1(\rn)}\leq K+C'T_{0}^{-1/N+1/2}r  
	\end{align*}
	for all $n\in B_r$ and thus $\mathscr{P}(B_r)\subset B_r$ provided $T_0$ is chosen small.  
	Also, if $m\in \mathbb{Y}$ then the same calculations lead to 
	\begin{align*}
		\|[\mathscr{P}n](t)-[\mathscr{P}m](t)\|_{L^1(\rn)}\leq C'T_{0}^{-1/N+1/2}\|n-m\|_{\mathbb{Y}},\quad \mbox{for all} \hspace{0.2cm}n,m\in B_r.    
	\end{align*}
	If $T_{0}$ is chosen small, the conclusion follows from the contraction principle, that is, with the result of the previous subsection, $\mathscr{P}$ has a fixed point in $\mathbb{Y}$. That $n\in C([0,T_0),L^1(\rn))$ goes by a standard continuity argument, thus details are omitted. Now, by Fubini's theorem one may write for $0<s<t<T_0$,
	\begin{align}\label{mass-conserv}
		\nonumber\int_{\rn}n(x,t)dx&=\int_{\rn}e^{t\Delta}n_0(x)dx-\int_{\rn}\int_0^te^{(t-s)\Delta}\nabla\cdot [n(u+\nabla c)](s)dsdx\\
		&=\int_{\rn}n_0(x)dx-\int_0^t\int_{\rn}\nabla e^{(t-s)\Delta}\cdot [n(u+\nabla c)](s)dxds.
	\end{align}
	In view of the divergence theorem and the density of the space of smooth, compactly supported functions in $L^1(\rn)$ it follows that
	if $H$ together with $\nabla\cdot H$ belong to $L^1(\rn)$, then it is true that $\int_{\rn}\nabla\cdot H dx=0$.
	If we set $H(x)=e^{(t-s)\Delta} (n(u+\nabla c))(x)$, then from the above calculation, it follows that $H\in L^1(\rn)^N$ and $\nabla\cdot H \in L^1(\rn)$. In fact,
	\begin{equation*}
		\|H\|_{L^1(\rn)}\leq Ct^{-1/2}\|n(t)\|_{L^1(\rn)}(\|c\|_{\X_{1,T^2_0}}+\|u\|_{\X_{3,T^2_0}})    
	\end{equation*}
	and 
	\begin{equation*}
		\|\nabla\cdot H\|_{L^1(\rn)}\leq C(t-s)^{-1/N}s^{-1/2}\|n(s)\|_{L^1(\rn)}(\|c\|_{\X_{1,T^2_0}}+\|u\|_{\X_{3,T^2_0}}),\quad 0<s<t
	\end{equation*}
	so that by applying this in \eqref{mass-conserv}, one finds that \begin{equation}\label{mass}
		\int_{\rn}n(x,t)dx=\int_{\rn}n_0(x)dx    
	\end{equation} for each $t\in (0,T_0)$. For the rest of the proof, slightly more assumptions on $u_0$ and $c_0$ are needed for technical reasons (e.g. it is used to show the contraction property pertaining to \eqref{main-eq}$_2$ which may be obtained from the mild formulation). It is enough that $u_0\in L^{\infty}(\rn)$ and $\nabla c_0\in L^{\infty}(\rn)$. As a result, the constructed local solution is such that $\nabla c$ and $u$ are bounded in space and time. The former statement is easy to check and the latter is known \cite{C}.  Since $n\in C([0,T_0);L^1(\rn))$, it can be shown using classical regularity arguments that $n$  enjoys a better  regularity, $n\in C((0,T_0);W^{2,p}(\rn))\cap C^1((0,T_0);L^{p}(\rn))$ for every $p\in (1,\infty)$. Hence,  multiplying the equation for $n$, \eqref{main-eq}$_2$ by $\mathrm{sgn}\hspace{0.1cm} n$ and integrating over $\rn$, we obtain 
	\begin{equation*}
		\dfrac{d}{dt}\int_{\rn}|n(x,t)|dx\leq 0\hspace{0.2cm}\mbox{for all} \hspace{0.2cm}t\in (0,T_0).
	\end{equation*}
	In turn, the latter implies the following property   
	\begin{equation}\label{L1-contr}
		\|n(t)\|_{L^1(\rn)}\leq \|n_0\|_{L^1(\rn)},\quad t\in (0,T_0).
	\end{equation}
	Such a contraction property partially relies on the fact that $T(t)=e^{t\Delta}$ defines a contraction semigroup on $L^1(\rn)$ and is known to hold for a large class of nonlinear equations with more general principal operators and nonlinearities with divergence structure of the kind similar to that in \eqref{main-eq}$_{2}$. We refer the reader to \cite{BFW,EZ} and references therein for more details. With \eqref{mass}-\eqref{L1-contr} and denoting by $n^{+}=\max(n,0)$ and $n^{-}=-\min(n,0)$ both integrable, we compute
	\begin{align*}
		\int_{\rn}n^{-}(x,t)dx&=-\int_{\rn}\dfrac{n(x,t)-|n(x,t)|}{2}dx\\
		&\leq \frac{1}{2}\int_{\rn}|n_0(x)|dx-\frac{1}{2}\int_{\rn}n_0(x)dx\\
		&\leq \int_{\rn}n^{-}_0(x)dx=0
	\end{align*}
	because $n_0\geq 0$. The conclusion $n(x,t)\geq 0$ for almost every $(x,t)\in \rn\times (0,T_0)$ follows from the non-negativity of $n^{-}$.
	
	In order to prove the sign preservation for $c$  we  argue at the level of smooth solutions (obtained via an approximation procedure) as the non-negativity property of those is guaranteed as soon as the regularized data is. The next step then consists of obtaining good uniform estimate for the difference of solutions in some suitable topology and then pass to the limit so that the solution $c$ inherits the non-negativity feature of the approximation. As already discussed, the strong $L^{\infty}$-topology seems more suited to achieve this goal. Let $c_0\geq 0$ be a bounded uniformly continuous function. Then for each $\varepsilon>0$, $c_{0\varepsilon}=e^{\varepsilon\Delta}c_0$ is nonnegative, smooth and uniformly bounded and the solution $c_{\varepsilon}$ of the regularized problem
	\begin{equation}
		\partial_tc_{\varepsilon}-\Delta c_{\varepsilon}+u\cdot \nabla c^{\varepsilon}+nc_{\varepsilon}=0\hspace{0.2cm}\mbox{in}\hspace{0.2cm}\rn\times (0,T_0)\quad c_{\varepsilon}(\cdot,0)=c_{0\varepsilon}    
	\end{equation}
	is also bounded smooth and nonnegative by the maximum principle. Observe that the above Sobolev regularity for $n$ and standard embedding theorem imply $n(t)\in L^{q}(\rn)$ for $q>N/2$ and $A=\displaystyle\sup_{0<t\leq T_0}\|n(t)\|_{L^q(\rn)}<\infty$. Hence,
	\begin{align*}
		\|(c_{\varepsilon}-c)(t)&\|_{L^{\infty}(\rn)}\\
		&\leq \|e^{t\Delta}c_{0\varepsilon}-e^{t\Delta}c_0\|_{L^{\infty}(\rn)}+ \int_0^t\big\|\nabla  e^{(t-s)\Delta}\cdot[u(c-c_{\varepsilon})](s)\big\|_{L^{\infty}(\rn)}ds+\\
		&\hspace{7cm}\int_0^t\big\| e^{(t-s)\Delta}[n(c-c_{\varepsilon})](s)\big\|_{L^{\infty}(\rn)}ds\\
		&\leq \|e^{t\Delta}c_{0\varepsilon}-e^{t\Delta}c_0\|_{L^{\infty}(\rn)}+\|u\|_{\X_{3,T^2_0}}\int_{0}^t(t-s)^{-1/N}s^{-1/2}\|(c_{\varepsilon}-c)(s)\|_{L^{\infty}(\rn)}ds+\\
		&\hspace{7cm}A\int_{0}^t(t-s)^{-N/2q}\|(c_{\varepsilon}-c)(s)\|_{L^{\infty}(\rn)}ds.
	\end{align*}
	Since the kernel $s\mapsto H(t,s)=(t-s)^{-1/N}s^{-1/2}+(t-s)^{-N/2q}\in L^1([0,t])$, we can apply a Gronwall's type inequality (see for instance \cite[Page 58]{W}) to arrive at
	\begin{align}\label{gronwall}
		\|(c_{\varepsilon}-c)(t)&\|_{L^{\infty}(\rn)}\leq \|e^{t\Delta}c_{0\varepsilon}-e^{t\Delta}c_0\|_{L^{\infty}(\rn)}+ C\int_0^tH^{\star}(t,s)\big\|(e^{t\Delta}c_{0\varepsilon}-e^{t\Delta}c_0)(s)\big\|_{L^{\infty}(\rn)}ds
	\end{align}
	where $s\mapsto H^{\star}(t,s)$ is integrable on $[0,t]$ and $C=\max(A,\|u\|_{\X_{3,T^2_0}})$. Using the semigroup property, it follows that 
	\[\lim_{\varepsilon\rightarrow 0}\|e^{t\Delta}c_{0\varepsilon}-e^{t\Delta}c_0\|_{L^{\infty}(\rn)}=\lim_{\varepsilon\rightarrow 0}\|e^{\varepsilon\Delta}\widetilde{c_0}-\widetilde{c_0}\|_{L^{\infty}(\rn)}=0\]
	because $\widetilde{c_0}=e^{t\Delta}c_0$ is bounded uniformly continuous. Passing to the limit in \eqref{gronwall} yields the pointwise almost everywhere convergence of $c_{\varepsilon}$ to $c$. Hence, $c\geq 0$ a.e. in $\rn\times [0,T_0)$.
	\subsection*{Proof of Theorem \ref{thm:uniqueness}}
	Assume that $\Phi\in \mathcal{S}'(\rn)$ such that $\nabla \Phi\in M_{2,N-2}(\rn)$ and let $U_j=[c_j,n_j,u_j]\in L^{\infty}_{loc}((0,\infty);L^{\infty}(\rn))$, $j=1,2$ and set $U=U_1-U_2$. Through similar ideas as in  \cite{Miu}, we first show that $U=0$ on $\rn\times [0,T_0)$ for some $T_0>0$. By Lemma \ref{lem:bilinear-est} and the identities
	\begin{align*}
		(c_1-c_2)&=B_1(c_2,n_2-n_1))+B_1(c_2-c_1,n_1))+B_1(u_2,\nabla (c_2-c_1)))+B_1(u_2-u_1,\nabla c_1)\\
		(n_1-n_2)&=B_2(n_1-n_2,\nabla c_2)+B_2(n_1,\nabla (c_2-c_1))+B_2(n_2,u_2-u_1)+B_2(u_1,n_2-n_1)\\
		(u_1-u_2)&=B_3(u_2,u_2-u_1)+B_3(u_2-u_1,u_1)+\mathscr{L}_{\Phi}(n_2-n_1),
	\end{align*}
	it follows that 
	\begin{align}\label{bound-on-U}
		\nonumber\|U\|_{\X_T}&=\|c-c_1\|_{\X_{1,T}}+\|n_1-n_2\|_{\X_{2,T}}+\|u_1-u_2\|_{\X_{2,T}}\\
		&\leq K_1\big(\|U_1\|_{\X_T}+\|U_2\|_{\X_T}+\|\nabla\Phi\|_{M_{2,N-2}(\rn)}\big)\|U\|_{\X_T}.
	\end{align}
	In view of condition \eqref{limit-cond}, there exists $T_0>0$ such that $\|U_1\|_{\X_{T_0}}+\|U_2\|_{\X_{T_0}}\leq \dfrac{1}{4K_1}$. Given $\varepsilon\in (0,1)$, if $\|\nabla\Phi\|_{M_{2,N-2}(\rn)}\leq \dfrac{\varepsilon}{4K_1}$, then \eqref{bound-on-U} implies that 
	\begin{align*}
		\|U\|_{\X_{T_0}}\leq \dfrac{1}{2}\|U\|_{\X_{T_0}}.    
	\end{align*}
	Hence, $U_1=U_2$ on $\rn\times [0,T_0)$. To extend this property to the whole interval $[0,\infty)$, observe that \[K_{12}\equiv\displaystyle\sup_{s\in (T_0,T)}\|U_1(s)\|_{L^{\infty}(\rn)}+\sup_{s\in (T_0,T)}\|U_2(s)\|_{L^{\infty}(\rn)}<\infty\] since $U_1,U_2\in L^{\infty}_{loc}(0,\infty;L^{\infty}(\rn))$. Now set
	\begin{align*}
		a(t)=\sup_{T_0<s<t}\|U(s)\|_{L^{\infty}(\rn)}    
	\end{align*}
	for $t>T_0$. We claim that there exits $\tau:=\tau(T_0)$ such that $U_1=U_2$ on $\rn\times [0,T_0+\tau)$. To see this, compute
	\begin{align*}
		|(c_1-c_2)(x,t)|&=\bigg|\int_0^t\int_{\rn}g(x-y,t-s)[c_2(n_2-n_1)+n_1(c_2-c_1)+\\
		&\hspace{7cm}u_2\nabla(c_2-c_1)+(u_2-u_1)\nabla c_1]dyds\bigg|\\
		&\leq \int_{T_0}^t\int_{\rn}g(x-y,t-s)(|c_2(n_2-n_1)|+|n_1(c_2-c_1)|+\\
		&\hspace{7cm}|u_2\nabla(c_2-c_1)|+|(u_2-u_1)\nabla c_1|)dyds\\
		&\leq C_1K_{12}a(t)\int_{T_0}^t\int_{\rn}g(x-y,t-s)dyds\\
		&\leq C_1K_{12}a(t)(t-T_0).
	\end{align*}
	On the other hand,
	\begin{align*}
		|(n_1-n_2)(x,t)|&=\bigg|\int_0^t\int_{\rn}g(x-y,t-s)\nabla\cdot[(n_2-n_1)\nabla c_2+n_1\nabla(c_2-c_1)+\\
		&\hspace{7cm}n_2(u_2-u_1)+u_1(n_2-n_1)]dyds\bigg|\\
		&\leq \int_{T_0}^t\int_{\rn}|\nabla g(x-y,t-s)|\big(|(n_2-n_1)\nabla c_2|+|n_1\nabla(c_2-c_1)|+\\
		&\hspace{7cm}|n_2(u_2-u_1)|+|u_1(n_2-n_1)|\big)dyds\\
		&\leq C_2K_{12}a(t)\int_{T_0}^t\int_{\rn}\dfrac{dy}{[(x-y)^2+(t-s)]^{\frac{N+1}{2}}}ds\\
		&\leq C_2K_{12}a(t)\int_{T_0}^t(t-s)^{-1/2}ds\int_{\rn}\dfrac{dy}{(|y|^2+1)^{\frac{N+1}{2}}}\\
		&\leq C_2K_{12}a(t)\sqrt{t-T_0}.
	\end{align*}
	Also, using \eqref{Oseen}, we find that
	\begin{align*}
		|(u_1-u_2)(x,t)|&=\bigg|\int_0^t\int_{\rn}\nabla k_2(x-y,t-s)[u_2\otimes(u_2-u_1)+(u_2-u_1)\otimes u_1]dyds+\\
		&\hspace{5cm}\int_0^t\int_{\rn} k_2(x-y,t-s)(n_2-n_1)\nabla\Phi]dyds\bigg|\\
		&\leq \int_{T_0}^t\int_{\rn}|\nabla k_2(x-y,t-s)||u_2\otimes(u_2-u_1)|+|(u_2-u_1)\otimes u_1|dyds+\\
		&\hspace{5cm}\int_{T_0}^t\int_{\rn}|k_2(x-y,t-s)||(n_2-n_1)\nabla\Phi|dyds\\
		&\leq C_3K_{12}a(t)\int_{T_0}^t\int_{\rn}\dfrac{dy}{[(x-y)^2+(t-s)]^{\frac{N+1}{2}}}ds+\\
		&\hspace{6cm}C_4\int_{T_0}^t\int_{\rn}\dfrac{|(n_2-n_1)(s)\nabla \Phi|dy}{(|x-y|+(t-s)^{1/2})^{-N}}ds\\
		&\leq C_3K_{12}a(t)\sqrt{t-T_0}+ C_4\int_{T_0}^t(t-s)^{-1/2}\|(n_2-n_1)(s)\nabla \Phi\|_{M_{2,N-2}(\rn)}ds\\
		&\leq C_3K_{12}a(t)\sqrt{t-T_0}+ C_4a(t)\|\nabla \Phi\|_{M_{2,N-2}(\rn)}\int_{T_0}^t(t-s)^{-1/2}ds\\
		&\leq C_3K_{12}a(t)\sqrt{t-T_0}+ C_4a(t)\|\nabla \Phi\|_{M_{2,N-2}(\rn)}\sqrt{t-T_0}.
	\end{align*}
	Summarizing, we have that (putting $C_5=\max(C_1,C_2+C_3,C_4\|\nabla \Phi\|_{M_{2,N-2}(\rn)})$)
	\begin{align*}
		|U(x,t)|\leq C_5K_{12}a(t)\bigg((t-T_0)+\frac{K_{12}+1}{K_{12}}(t-T_0)^{\frac{1}{2}}\bigg).     
	\end{align*}
	This implies that $a(T_0+\tau)\leq \frac{1}{4}a(T_0+\tau) $
	for $\tau=\theta^2$, $\theta=\sqrt{\bigg(\dfrac{K_{12}+1}{K_{12}}\bigg)^2+\dfrac{1}{C_5K_{12}}}-\dfrac{K_{12}+1}{K_{12}}>0$. This shows the claim  and a simple iteration procedure yields the desired conclusion.
	\subsection*{Proof of Theorem \ref{thm:LWP1}}The argument here is similar to that used above. We give the details for the reader's convenience. Let $d_0\in UC(\rn)$, $\varGamma_{\delta_0}$ and $\overline{c}=c-\varGamma_{\delta_0}$ as before. Next let $\widetilde{v}_{\kappa}=e^{-t\kappa}e^{t\Delta}v_0$ and make the change of variable $w=v-\widetilde{v}_{\kappa}$. Then Syst. \eqref{Duhamel-D-CNS} becomes   
	\begin{align}\label{Duhamel-DCNS1}
		\begin{cases}
			\displaystyle\overline{c}=e^{t\Delta}\overline{c}_0-\int_0^te^{(t-s)\Delta}[(\overline{c}+\varGamma_{\delta_0}) n+u\cdot\nabla (\overline{c}+\nabla\varGamma_{\delta_0})+\Delta\varGamma_{\delta_0}](\cdot,s)ds\\
			\displaystyle n=e^{t\Delta}n_0-\int_0^te^{(t-s)\Delta}\nabla\cdot[nu+n\nabla (\overline{c}+\varGamma_{\delta_0})+n\nabla(w+\widetilde{v}_{\kappa})](\cdot,s)ds\\
			\displaystyle w=\int_0^te^{-(t-s)\kappa}e^{(t-s)\Delta} [u\cdot\nabla (w+\widetilde{v}_{\kappa})+n](\cdot,s)ds\\
			\displaystyle u=e^{-t\kappa}e^{t\Delta}u_0-\int_0^te^{-t\kappa}e^{(t-s)\Delta}\textbf{P}\nabla\cdot (u\otimes u)(\cdot,s)ds-\int_0^te^{(t-s)\Delta} \textbf{P}(n \Psi)(\cdot,s)ds.
		\end{cases}    
	\end{align}
	Observe that if $v_0\in BMO_R(\rn)$ for some $0<R\leq \infty$, then 
	\begin{align*}
		[\widetilde{v}_{\kappa}]_{\X_{1,R^2}}\leq C\|v_0\|_{BMO_R(\rn)}.    
	\end{align*}
	This easily follows from  the Carleson measure characterization of $BMO_R(\rn)$. Remark that the  kernel of the integral operator $e^{-t\kappa}e^{t\Delta}$, $\kappa>0$ is bounded above by the heat kernel.  This in turn implies that $\nabla \widetilde{v}_{\kappa}\in \X_{3,R^2}$. Hence, setting \begin{align*}
		\mathbb{F}_{2,\kappa}(\overline{c},n,w,u)&=B_2(n,\nabla(\overline{c}+\varGamma_{\delta_0}))+B_2(n,u)+B_2(n,\nabla(w+\widetilde{v}_{\kappa}))\\
		\mathbb{F}_4(n,w,u)&=B_4(u,w+\widetilde{v}_{\kappa})+\mathscr{L}(n),
	\end{align*}
	the next Proposition follows from Lemmas \ref{lem:bilinear-est} and \ref{lem:DCNS}.
	\begin{proposition}\label{prop:self-cont}
		For $R>0$ and $T_0=\min(\delta_0,R)$. If $[n,w,u]\in \X_{2,T_0^2}\times \X_{1,T_0^2}\times \X_{3,T_0^2}$  then $\mathbb{F}_{2,\kappa}(\overline{c},n,w,u)\in \X_{2,T_0^2}$,  $\mathbb{F}_4(n,w,u)\in \X_{1,T_0^2}$ and there exists $C_4,C_5>0$ such that
		\begin{align}
			\|\mathbb{F}_{2,\kappa}(\overline{c},n,w,u)\|_{\X_{2,T^2_0}}&\leq C_5(\|\overline{c}\|_{\X_{1,T_0^2}}+\|u\|_{\X_{3,T^2_0}}+[\widetilde{v}_{\kappa}]_{\X_{1,T_0^2}}+\|w\|_{\X_{1,T_0^2}}+\varepsilon_0)\|n\|_{\X_{2,T^2_0}}\\
			\|\mathbb{F}_4(n,w,u)\|_{\X_{1,T^2_0}}&\leq C_4([\widetilde{v}_{\kappa}]_{\X_{1,T_0^2}}+\|w\|_{\X_{1,T^2_0}})\|u\|_{\X_{3,T^2_0}}+C_4\|n\|_{\X_{2,T^2_0}}.
		\end{align}
		In addition, for any $[\overline{c}_1,n_1,w_1,u_1]\in \Z_{T^2_0}$, it holds that  
		\begin{align*}
			&\big\|\mathbb{F}_{2,\kappa}(\overline{c},n,w,u)-\mathbb{F}_{2,\kappa}(\overline{c}_1,n_1,w_1,u_1)\big\|_{\X_{2,T^2_0}}\\
			&\leq C_5\|n\|_{\X_{2,T_0^2}}\|w-w_1\|_{\X_{1,T_0^2}}+
			C_5(\|\overline{c}\|^2_{\X_{1,T_0^2}}+[\widetilde{v}_{\kappa}]_{\X_{1,T_0^2}}+\|w\|_{\X_{1,T^2_0}}+1)\|n-n_1\|_{\X_{2,T_0^2}}+\\
			& \hspace{3cm}C_5\|n_1\|_{\X_{2,T_0^2}}(\|\overline{c}-\overline{c}_1\|^2_{\X_{1,T_0^2}})+C_5(\|\overline{c}\|_{\X_{1,T_0^2}}+\varepsilon_0)\|u-u_1\|_{\X_{3,T_0^2}}
		\end{align*}
		and 
		\begin{align*}
			\big\|\mathbb{F}_4(n,w,u)-\mathbb{F}_4(n_1,w_1,u_1)\big\|_{\X_{1,T^2_0}}&\leq C_4(\|w_1\|_{\X_{1,T_0^2}}+[w_1]_{\X_{1,T_0^2}})\|u-u_1\|_{\X_{3,T_0^2}}+\\
			&\quad \quad \quad C_4\|u\|_{\X_{1,T_0^2}}\|w-w_1\|_{\X_{1,T_0^2}}+C_4\|n-n_1\|_{\X_{2,T_0^2}}.
		\end{align*}
	\end{proposition}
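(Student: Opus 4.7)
The plan is to reduce both assertions to direct applications of the previously established Lemmas \ref{lem:bilinear-est} and \ref{lem:DCNS}, after decomposing $\mathbb{F}_{2,\kappa}$ and $\mathbb{F}_{4}$ into their constituent pieces and observing that the gradient operator maps $\X_{1,T_0^2}$ continuously into $\X_{3,T_0^2}$ with $\|\nabla v\|_{\X_{3,T_0^2}} \leq [v]_{\X_{1,T_0^2}}$ (immediate from the definitions of the two norms). Write
\begin{align*}
\mathbb{F}_{2,\kappa}(\overline{c},n,w,u) &= B_2(n,\nabla\overline{c}) + B_2(n,\nabla\varGamma_{\delta_0}) + B_2(n,u) + B_2(n,\nabla w) + B_2(n,\nabla\widetilde{v}_\kappa), \\
\mathbb{F}_{4}(n,w,u) &= B_4(u,w) + B_4(u,\widetilde{v}_\kappa) + \mathscr{L}(n).
\end{align*}

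\textbf{Step 1 (norm bounds).} Apply the bound \eqref{3} from Lemma \ref{lem:bilinear-est} to the five pieces of $\mathbb{F}_{2,\kappa}$. The terms $B_2(n,\nabla\overline{c})$, $B_2(n,u)$, $B_2(n,\nabla w)$, and $B_2(n,\nabla\widetilde{v}_\kappa)$ are controlled by $\|n\|_{\X_{2,T_0^2}}$ times $\|\overline{c}\|_{\X_{1,T_0^2}}$, $\|u\|_{\X_{3,T_0^2}}$, $\|w\|_{\X_{1,T_0^2}}$, and $[\widetilde{v}_\kappa]_{\X_{1,T_0^2}}$ respectively. For the remaining term $B_2(n,\nabla\varGamma_{\delta_0})$, I would invoke Lemma \ref{lem:UC-pert} which provides $\delta_0\|\nabla\varGamma_{\delta_0}\|_{L^\infty(\rn)} \leq C\varepsilon_0$; together with the standard fact that $L^\infty(\rn) \subset BMO_R(\rn)$ controls the $L^2$-Carleson norm of $\nabla\varGamma_{\delta_0}$ (as in the proof of Lemma \ref{lem:smooth-effect}), this yields $\|\nabla\varGamma_{\delta_0}\|_{\X_{3,T_0^2}} \leq C\varepsilon_0$. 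Summing these contributions gives the claimed bound on $\|\mathbb{F}_{2,\kappa}\|_{\X_{2,T_0^2}}$. For $\mathbb{F}_4$, the bound \eqref{B5} from Lemma \ref{lem:DCNS} handles $\mathscr{L}(n)$, while the $B_4$-estimate from the same lemma handles $B_4(u,w)$ and $B_4(u,\widetilde{v}_\kappa)$, producing the factor $([\widetilde{v}_\kappa]_{\X_{1,T_0^2}} + \|w\|_{\X_{1,T_0^2}})\|u\|_{\X_{3,T_0^2}} + \|n\|_{\X_{2,T_0^2}}$.

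\textbf{Step 2 (difference estimates).} For the Lipschitz-type bounds, exploit the bilinearity via telescoping, e.g.
\[
B_2(n,\nabla\overline{c}) - B_2(n_1,\nabla\overline{c}_1) = B_2(n - n_1,\nabla\overline{c}) + B_2(n_1,\nabla(\overline{c} - \overline{c}_1)),
\]
and analogously for each of the other bilinear pieces of $\mathbb{F}_{2,\kappa}$ and $\mathbb{F}_4$. Applying the continuity estimates from Lemmas \ref{lem:bilinear-est} and \ref{lem:DCNS} to each summand and collecting terms reproduces exactly the stated inequalities; the $\varGamma_{\delta_0}$-dependent pieces cancel since the same $\varGamma_{\delta_0}$ appears in both $\mathbb{F}_{2,\kappa}(\overline{c},\cdots)$ and $\mathbb{F}_{2,\kappa}(\overline{c}_1,\cdots)$, and similarly for $\widetilde{v}_\kappa$ in $\mathbb{F}_4$.

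\textbf{Main obstacle.} The routine bookkeeping above hides only one genuinely nontrivial input, namely verifying $\|\nabla\widetilde{v}_\kappa\|_{\X_{3,T_0^2}} \leq C[\widetilde{v}_\kappa]_{\X_{1,T_0^2}}$ and, in particular, that $[\widetilde{v}_\kappa]_{\X_{1,T_0^2}}$ is finite when $v_0 \in BMO_R(\rn)$ (in our case $R = \infty$ globally, or $R$ small in the local-data setting). The second fact is immediate from the Carleson-measure characterization of $BMO$ applied to the semigroup $e^{-\kappa t}e^{t\Delta}$, whose kernel is pointwise dominated by the heat kernel; the first is just the definition of the $\X_{1,T_0^2}$- and $\X_{3,T_0^2}$-norms. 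Once these observations are in place, the proposition is a purely mechanical consequence of the two previously proved lemmas.
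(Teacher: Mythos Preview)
Your proposal is correct and follows essentially the same route as the paper, which simply states that the proposition ``follows from Lemmas \ref{lem:bilinear-est} and \ref{lem:DCNS}'' after noting (exactly as you do in your ``main obstacle'' paragraph) that $[\widetilde{v}_{\kappa}]_{\X_{1,R^2}}\leq C\|v_0\|_{BMO_R(\rn)}$ via the Carleson characterization and that the kernel of $e^{-\kappa t}e^{t\Delta}$ is dominated by the heat kernel, so $\nabla\widetilde{v}_{\kappa}\in\X_{3,R^2}$. One minor inaccuracy: in Step~2 you say the $\varGamma_{\delta_0}$- and $\widetilde{v}_{\kappa}$-pieces ``cancel'' in the differences, but in fact they telescope to $B_2(n-n_1,\nabla\varGamma_{\delta_0})$ and $B_4(u-u_1,\widetilde{v}_{\kappa})$, which contribute the $\varepsilon_0\|n-n_1\|$ and $[\widetilde{v}_{\kappa}]\|u-u_1\|$ factors absorbed into the stated bounds; this does not affect the validity of your argument.
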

	The remaining part of the proof is done exactly as before. For brevity,  details are omitted.
	
	\section{Appendix}
	This section contains  all the deferred proofs which follow as particular cases of more general results.
	\begin{definition}
		Let $N\geq 3$ and $-2<\lambda \leq 2$. We say that a tempered distribution $f$ belongs to  $\mathscr{L}^{-1}_{2,N-\lambda}(\rn)$ if $\|f\|_{\mathscr{L}^{-1}_{2,N-\lambda}(\rn)}$ is finite, 
		\begin{equation*}
			\|f\|_{\mathscr{L}^{-1}_{2,N-\lambda}(\rn)}=\sup_{x\in \rn,R>0}\bigg(|B(x,R)|^{\frac{\lambda}{N}-1}\IntC |e^{t\Delta}f(y,s)|^2dyds\bigg)^{1/2}.   
		\end{equation*}
	\end{definition}
	For $\lambda=0$, $\mathscr{L}^{-1}_{2,N-\lambda}(\rn)$ is the space $BMO^{-1}(\rn)$.
	Recall the characterization of square-Campanato spaces via caloric extension  \cite{JXY}: $f\in \mathscr{L}_{2,N-\lambda}(\rn)$, $\lambda\in (-2,2]$ if and only if its caloric extension $u=e^{t\Delta}f\in T^{2,\lambda}$ and $\|u\|_{T^{2,\lambda}}\leq C\|f\|_{\mathscr{L}_{2,N-\lambda}(\rn)}$ where $C$ is a constant independent of $f$ and
	\begin{align*}
		\|u\|_{T^{2,\lambda}}:=\sup_{x\in \rn,R>0}\bigg(|B(x,R)|^{\frac{\lambda}{N}-1}\IntC |\nabla u(y,s)|^2dyds\bigg)^{1/2}.   
	\end{align*}
	This extrinsic definition of Campanato spaces suggests that $\mathscr{L}^{-1}_{2,N-\lambda}(\rn)=\nabla\cdot (\mathscr{L}_{2,N-\lambda}(\rn))^N$. This is indeed the case as shown below. 
	\begin{lemma}\label{lem:M^{-1}-charac}
		Assume that $\lambda\in (-2,2]$ and $N>2$. A tempered distribution $f\in \mathscr{L}^{-1}_{2,N-\lambda}(\rn)$ if and only if there exists a family $(f_l)^N_{l=1}\subset \mathscr{L}^{-1}_{2,N-\lambda}(\rn)$ such that $f=\sum_{l=1}^{N}f_l$. Moreover, the following equivalence holds \begin{equation}\label{M^{-1}}
			\|f\|_{\mathscr{L}^{-1}_{2,N-\lambda}(\rn)}\approx \inf\bigg\{\sum_{l=1}^{N}\|f_l\|_{\mathscr{L}^{-1}_{2,N-\lambda}(\rn)}:f=\sum_{l=1}^{N}\partial_lf_l\bigg\}.
		\end{equation} 
	\end{lemma}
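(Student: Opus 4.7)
The plan is to establish both directions of the claimed norm equivalence using the Carleson-measure characterization $\|g\|_{\mathscr{L}_{2,N-\lambda}(\rn)}\approx\|e^{t\Delta}g\|_{T^{2,\lambda}}$ (via the caloric extension) recalled just before the lemma. For the \emph{sufficiency} direction, assume $f=\sum_{l=1}^{N}\partial_l f_l$ with $f_l\in\mathscr{L}_{2,N-\lambda}(\rn)$. Since $\partial_l$ commutes with the heat semigroup, one has the pointwise bound $|e^{t\Delta}f|^2\leq N\sum_{l=1}^{N}|\nabla e^{t\Delta}f_l|^2$. Integrating over the parabolic cylinder $B(x,R)\times(0,R^2)$, weighting by $|B(x,R)|^{\lambda/N-1}$, and taking the supremum in $(x,R)$ yields
\[
\|f\|_{\mathscr{L}^{-1}_{2,N-\lambda}(\rn)}^2\leq N\sum_{l=1}^{N}\|e^{t\Delta}f_l\|_{T^{2,\lambda}}^2\leq C\sum_{l=1}^{N}\|f_l\|_{\mathscr{L}_{2,N-\lambda}(\rn)}^2.
\]

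For the \emph{necessity} direction, given $f\in\mathscr{L}^{-1}_{2,N-\lambda}(\rn)$, I would take $f_l:=-\partial_l(-\Delta)^{-1}f$, defined via Littlewood-Paley localization so as to make sense of the inverse Laplacian on distributions modulo polynomials; this gives $\sum_{l=1}^{N}\partial_l f_l=f$. A direct computation yields $\partial_j e^{s\Delta}f_l=R_jR_l\,e^{s\Delta}f$, where $R_j,R_l$ are Riesz transforms, so the caloric characterization reduces the task to proving
\[
\sup_{x\in\rn,\,R>0}|B(x,R)|^{\lambda/N-1}\int_0^{R^2}\!\!\int_{B(x,R)}|R_jR_l\,e^{s\Delta}f(y,s)|^2\,dy\,ds\leq C\|f\|_{\mathscr{L}^{-1}_{2,N-\lambda}(\rn)}^2.
\]
For each fixed $(x,R)$ and time $s\in(0,R^2)$, I would split $e^{s\Delta}f=g_s+h_s$ with $g_s:=(e^{s\Delta}f)\mathbf{1}_{B(x,2R)}$. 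The $g_s$-piece is handled by the $L^2$-boundedness of iterated Riesz transforms combined with the defining Carleson inequality applied at scale $2R$. For the $h_s$-piece, I would combine the Calderón–Zygmund kernel bound $|K_{jl}(y-z)|\lesssim|y-z|^{-N}$, the geometric inequality $|y-z|\gtrsim|z-x|$ valid for $y\in B(x,R)$ and $z\in B(x,2R)^c$, and a dyadic annular decomposition $B(x,2R)^c=\bigcup_{k\geq1}A_k$ with $A_k:=B(x,2^{k+1}R)\setminus B(x,2^kR)$. This yields a pointwise bound on $R_jR_lh_s(y)$ by a weighted series of the form $\sum_k(2^kR)^{-N/2}\|e^{s\Delta}f\|_{L^2(A_k)}$, which is then closed via Cauchy-Schwarz in $k$ and the Carleson bound reapplied at each enlarged scale $2^{k+1}R$.

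The hard part will be ensuring absolute convergence of the dyadic series in the non-local estimate: one must balance the kernel decay $2^{-kN}$ against the growth $2^{k(N-\lambda)}$ inherited from the Carleson bound on larger balls, and a small auxiliary weight $2^{\pm k\alpha}$ inserted via Cauchy-Schwarz yields convergence exactly in the stated range $\lambda\in(-2,2]$ — which is precisely what fixes the admissible parameters in the statement. A secondary but necessary technicality is making $-\partial_l(-\Delta)^{-1}f$ rigorous when $f$ is only in $\mathscr{L}^{-1}_{2,N-\lambda}(\rn)$; this is standard via Littlewood-Paley projections, with the resulting $f_l$ determined modulo polynomials, consistent with the fact that the Campanato semi-norm annihilates polynomials of the appropriate degree.
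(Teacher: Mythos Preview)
Your sufficiency direction is correct and essentially identical to the paper's. Your choice $f_l=-\partial_l(-\Delta)^{-1}f$ for the necessity direction also matches the paper. The gap is in your far-field estimate for the $h_s$-piece.

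With the raw Calder\'on--Zygmund bound $|K_{jl}(y-z)|\lesssim|y-z|^{-N}$ and your dyadic annuli, the pointwise bound you obtain is $\sum_k(2^kR)^{-N/2}\|e^{s\Delta}f\|_{L^2(A_k)}$. After Cauchy--Schwarz with weight $2^{k\alpha}$, squaring, integrating over $B(x,R)\times(0,R^2)$, and invoking the Carleson bound at scale $2^{k+1}R$ (which contributes $(2^kR)^{N-\lambda}$), the resulting series is
\[
R^{N-\lambda}\|f\|_{\mathscr{L}^{-1}_{2,N-\lambda}}^2\sum_{k\geq1}2^{k(\alpha-\lambda)},
\]
and you simultaneously need $\alpha>0$ (for the first Cauchy--Schwarz factor) and $\alpha<\lambda$ (for the above). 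This forces $\lambda>0$, not the full range $\lambda\in(-2,2]$ you claim. The kernel decay $|y-z|^{-N}$ is exactly borderline and no choice of weight repairs this; covering $A_k$ by $R$-balls instead of using the enlarged-scale Carleson bound leads to $\sum_k k^{-1}$, which also diverges.

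The paper fixes this by inserting an additional splitting: write $e^{t\Delta}f_{j,l}=f^1_{j,l}+f^2_{j,l}$ with $f^1_{j,l}=\varphi_R\ast\partial_j\partial_l(-\Delta)^{-1}(e^{t\Delta}f)$ for a bump $\varphi_R$ at scale $R$. The low-frequency piece $f^1_{j,l}$ is controlled in $L^\infty$ via the embedding $\mathscr{L}^{-1}_{2,N-\lambda}(\rn)\hookrightarrow\dot{B}^{-(1+\lambda/2)}_{\infty,\infty}(\rn)$ (this is where $\lambda>-2$ genuinely enters). For the remainder, the far-field part $(I-\varphi_R\ast)R_jR_l[(1-\zeta_{x,R})e^{t\Delta}f]$ now enjoys the improved pointwise decay $R|x-z|^{-(N+1)}$ coming from the H\"ormander smoothness of the Calder\'on--Zygmund kernel; this extra factor of $R/|x-z|$ turns the annular sum into $\sum_k k^{-2}$, convergent for all $\lambda$. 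Your argument is missing exactly this cancellation mechanism.
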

	\begin{proof}[Proof of Lemma \ref{lem:M^{-1}-charac}] 
		Let $f$ be a tempered distribution. Assume that there is $f_l\in \mathscr{L}^{-1}_{2,N-\lambda}(\rn)$, $l=1,\cdots,N$ with $f=\sum_{l=1}^Nf_l$. Using the characterization of Morrey spaces by heat extension we obtain 
		\begin{align*}
			\|f\|_{\mathscr{L}^{-1}_{2,N-\lambda}(\rn)}&\leq C\sum_{l=1}^N\sup_{x\in \rn,R>0}\bigg(|B(x,R)|^{\frac{\lambda}{N}-1}\IntC |\partial_je^{t\Delta}f_l|^2dyds\bigg)^{1/2}\\
			&\leq C\sum_{l=1}^N\|f_l\|_{\mathscr{L}^{-1}_{2,N-\lambda}(\rn)}.    
		\end{align*}
		This shows that $\nabla\cdot (\mathscr{L}_{2,N-\lambda}(\rn))^N\subset \mathscr{L}^{-1}_{2,N-\lambda}(\rn)$. The converse follows from the observation that if $f\in \mathscr{L}^{-1}_{2,N-\lambda}(\rn)$ then $f_{j,l}=\partial_j\partial_l(-\Delta)^{-1}f\in \mathscr{L}^{-1}_{2,N-\lambda}(\rn)$. Indeed, let $\varphi\in C^{\infty}_0(\rn)$ be a cut-off function with supp $\varphi\subset B(0,1)$, the Euclidean unit ball and $\int_{\rn}\varphi dx=1$. Set $\varphi_{R}(x)=R^{-N}\varphi(x/R)$ for $R>0$ and write 
		\begin{align*}
			e^{t\Delta}f_{j,l}=\partial_j\partial_l(-\Delta)^{-1}(e^{t\Delta}f)=f^1_{j,l}+f^2_{j,l}
		\end{align*}
		where $f^1_{j,l}=\varphi_R\ast \partial_j\partial_l(-\Delta)^{-1}(e^{t\Delta}f)$ and $f^2_{j,l}=f_{j,l}-f^1_{j,l}$. Noticing that $\partial_j\partial_l(-\Delta)^{-1}$ is a Fourier multiplier of order $0$, one has  
		\begin{align*}
			\|f^1_{j,l}\|_{L^{\infty}(\rn)}&\leq C\|\varphi_R\|_{\dot{B}^{1+\frac{\lambda}{2}}_{1,1}(\rn)}\|f_{j,l}\|_{\dot{B}^{-(1+\frac{\lambda}{2})}_{\infty,\infty}(\rn)}\\
			&\leq CR^{-1-\frac{\lambda}{2}}\|e^{t\Delta}f\|_{\dot{B}^{-(1+\frac{\lambda}{2})}_{\infty,\infty}(\rn)}\\
			&\leq CR^{-1-\frac{\lambda}{2}}\|f\|_{\dot{B}^{-(1+\frac{\lambda}{2})}_{\infty,\infty}(\rn)}\\
			&\leq CR^{-1-\frac{\lambda}{2}}\|f\|_{\mathscr{L}^{-1}_{2,N-\lambda}(\rn)}
		\end{align*}
		since the operator $e^{t\Delta}$ maps $\dot{B}^s_{p,q}(\rn)$ into itself (for $1\leq p,q\leq \infty$, $s\in \mathbb{R}$)  in addition to $\mathscr{L}^{-1}_{2,N-\lambda}(\rn)\subset \dot{B}^{-(1+\lambda/2)}_{\infty,\infty}(\rn)$. The proof of the latter continuous embedding is given below. Using this, it follows that 
		\begin{align*}
			\IntC |f^1_{j,l}|^2dydt\leq C |B(x,R)|^{1-\frac{\lambda}{N}}\|f\|_{\mathscr{L}^{-1}_{2,N-\lambda}(\rn)}.   
		\end{align*}
		To estimate $f^2_{j,l}$, we further decompose it into two parts writing $f^2_{j,l}=f^{21}_{j,l}+f^{22}_{j,l}$ with 
		\begin{align*}
			f^{21}_{j,l}&=\partial_j\partial_l(-\Delta)^{-1}(\zeta_{x,R}e^{t\Delta}f)-\varphi_R\ast \partial_j\partial_l(-\Delta)^{-1}(\zeta_{x,R}e^{t\Delta}f)\\
			f^{22}_{j,l}&=\partial_j\partial_l(-\Delta)^{-1}[(1-\zeta_{x,R})e^{t\Delta}f] -\varphi_R\ast \partial_j\partial_l(-\Delta)^{-1}[(1-\zeta_{x,R})e^{t\Delta}f]  
		\end{align*}
		where $\zeta_{x,R}=\zeta(R^{-1}(x-\cdot))$ and $\zeta\in C^{\infty}_0(\rn)$, supp $\zeta \subset B(0,20)$, $\zeta=1$ on $B(0,10)$. By Pancherel's identity,
		\begin{align}\label{f211}
			\nonumber\IntC |\partial_j\partial_l(-\Delta)^{-1}(\zeta_{x,R}e^{t\Delta}f)|^2dydt&\leq \int_0^{R^2}\|\partial_j\partial_l(-\Delta)^{-1}(\zeta_{x,R}e^{t\Delta}f)\|^2_{L^2(\rn)}dt\\
			\nonumber&\leq C\int_0^{R^2}\big\|\xi_{j}\xi_{l}|\xi|^{-2} \mathcal{F}\big(\zeta_{x,R}e^{t\Delta}f\big)\big\|^2_{L^2(\rn)}dt\\
			\nonumber&\leq C\int_0^{R^2}\big\|\mathcal{F}\big(\zeta_{x,R}e^{t\Delta}f)\big\|^2_{L^2(\rn)}dt\\
			&\leq C\int_0^{R^2}\big\|\zeta_{x,R}e^{t\Delta}f\big\|^2_{L^2(\rn)}dt.
		\end{align}
		On the other hand, invoking Minkowski's inequality we find that
		\begin{align}\label{f212}
			\nonumber\IntC |\varphi_R\ast\partial_j\partial_l(-\Delta)^{-1}(\zeta_{x,R}e^{t\Delta}f)|^2dydt&\leq \int_0^{R^2}\|\varphi_R\ast\partial_j\partial_l(-\Delta)^{-1}(\zeta_{x,R}e^{t\Delta}f)\|^2_{L^2(\rn)}dt\\
			\nonumber&\leq C\int_0^{R^2}\|\partial_j\partial_l(-\Delta)^{-1}(\zeta_{x,R}e^{t\Delta}f)\|^2_{L^2(\rn)}dt\\
			&\leq C\int_0^{R^2}\big\|\zeta_{x,R}e^{t\Delta}f\big\|^2_{L^2(\rn)}dt.
		\end{align}
		Thus, from \eqref{f211} and \eqref{f212}, one deduces that
		\begin{align*}
			\IntC |f^{21}_{j,l}(y,t)|^2dydt\leq C|B(x,R)|^{1-\lambda/N}.   
		\end{align*}
		In order to estimate the term $f^{22}_{j,l}$, recall the pointwise estimate (see \cite[Page 161]{L})
		\begin{align}\label{pt-est-f22}
			\big|f^{22}_{j,l}(y,t)\big|\leq C\int_{|x-z|\geq 10R}\dfrac{R}{|x-z|^{N+1}}|e^{t\Delta}f(z)|dz,\quad y\in B(x,R).    
		\end{align}
		Set $\widehat{A}_k=B(x,10R(k+1))\setminus B(x,10Rk)$ and observe that \eqref{pt-est-f22} implies 
		\begin{align*}
			\int_{B(x,R)}\big|f^{22}_{j,l}(y,t)\big|^2dy&\leq CR^{N+1}\int_{|x-z|\geq 10R}\dfrac{1}{|x-z|^{N+1}}|e^{t\Delta}f(z)|^2dz\\
			&\leq CR^{N+1}\sum^{\infty}_{k=1}\int_{\widehat{A}_k}\dfrac{1}{|x-z|^{N+1}}|e^{t\Delta}f(z)|^2dz\\
			&\leq C\sum^{\infty}_{k=1}k^{-(N+1)}\sum_{\substack{w\in \mathbb{Z}^N\\ \&\\w\in \widehat{A}_k}}\int_{B(w,R)}|e^{t\Delta}f|^2dz\\
			&\leq C\bigg(\sum^{\infty}_{k=1}k^{-2}\bigg)\int_{B(w,R)}|e^{t\Delta}f|^2dz.
		\end{align*}
		Hence, 
		\begin{align*}
			|B(x,R)|^{1/\lambda-N}\IntC\big|f^{22}_{j,l}(y,t)\big|^2dydt\leq C. 
		\end{align*}
		Next, let $f_l=-\partial_l(-\Delta)^{-1}f$. It is clear that $f_l\in \mathscr{L}_{2,N-\lambda}(\rn)$ for each $l=1,...,N$. Moreover, we verify that
		\begin{align*}
			\mathcal{F}\bigg(\sum_{l=1}^N\partial_lf_l\bigg)(\xi)= \sum_{l=1}^Ni\xi_l \mathcal{F}(f_l)(\xi)=\sum_{l=1}^N-i\xi_l(i\xi_l)|\xi|^{-2} \mathcal{F}(f)(\xi)= \mathcal{F}(f)(\xi).  
		\end{align*}
		This achieves the proof of Lemma \ref{lem:M^{-1}-charac}.
	\end{proof}
	\begin{proof}[Proof of the embedding \eqref{embedding-BM-Y_Besov}]
		Let  $N\geq 3$. Consider $0\leq \beta<N$, $0\leq \lambda\leq 2$ and take $2\leq p<\frac{2(N-\beta)}{\lambda}$ ($2\leq p<\infty$ when $\lambda=0$). Assume that $f\in \mathcal{N}^{-2s}_{p,\beta,\infty}(\rn)$, $s=\frac{\lambda+2}{4}+\frac{\beta-N}{2p}>0$. From the characterization of Besov-Morrey spaces (see e.g. \cite{KY,M}), it holds that 
		\begin{equation}
			\sup_{t>0} t^{s}\|e^{t\Delta}f\|_{M_{p,\beta}(\rn)}\approx \|f\|_{\mathcal{N}^{-2s}_{p,\beta,\infty}(\rn)}.    
		\end{equation} 
		For $t>0$, a use of H\"{o}lder's inequality yields
		\begin{align*}
			\|e^{t\Delta}f\|^2_{L^2(B_R(x))}&\leq CR^{\frac{N(p-2)}{p}}\|e^{t\Delta}f(\cdot,t)\|^2_{L^p(B_R(x))}\\
			&\leq CR^{\frac{2\beta+N(p-2)}{p}}t^{-2s}\|f\|^2_{\mathcal{N}^{-2s}_{p,\beta,\infty}(\rn)}
		\end{align*}
		for any  $x\in \rn$ and $R>0$ so that
		\begin{align*}
			\|f\|_{\mathscr{L}^{-1}_{2,N-\lambda}(\rn)}&=\sup_{x\in\rn,R>0}\bigg(\Ball^{\frac{\lambda}{N}-1}\IntC|e^{t\Delta}f(y)|^2dydt\bigg)^{1/2}\\
			&=\sup_{x\in\rn,R>0}\bigg(\Ball^{\frac{\lambda}{N}-1}\int^{R^2}_0 \|e^{t\Delta}f(t)\|^2_{L^2(B_R(x))}dt\bigg)^{\frac{1}{2}}\\
			&\leq C\sup_{x\in\rn,R>0}\bigg(\Ball^{\frac{\lambda}{N}-1}R^{\frac{2\beta+N(p-2)}{p}}\int^{R^2}_0t^{-2s}dt\bigg)^{1/2}\|f\|_{\mathcal{N}^{-2s}_{p,\beta,\infty}(\rn)}\\
			&\leq C\|f\|_{\mathcal{N}^{-2s}_{p,\beta,\infty}(\rn)}.
		\end{align*}
		The proof of the continuous embedding $\mathscr{L}^{-1}_{2,N-\lambda}(\rn)\subset \dot{B}^{-(1+\lambda/2)}_{\infty,\infty}(\rn)$ which holds for any $\lambda\in (-2,2]$ follows from the definition of $\mathscr{L}^{-1}_{2,N-\lambda}(\rn)$ and is inspired by \cite[Proposition 7]{Ca}. Let $\langle\cdot,\cdot\rangle$ denote the duality pairing between $\mathcal{S}(\rn)$ and its dual $\mathcal{S}^{\prime}(\rn)$. There exists a constant $C>0$ such that 
		\begin{align}\label{Y-B1}
			\big|\langle f, e^{-\frac{|x|^2}{4}}\rangle\big|\leq C\|f\|_{\mathscr{L}^{-1}_{2,N-\lambda}(\rn)}
		\end{align}
		for all $f\in \mathscr{L}^{-1}_{2,N-\lambda}(\rn)$. By translation invariance of $\mathscr{L}^{-1}_{2,N-\lambda}(\rn)$, \eqref{Y-B1} implies that 
		\begin{align*}
			\big|(e^{-\frac{|\cdot|^2}{4}}\ast f)\big|\leq C\|f\|_{\mathscr{L}^{-1}_{2,N-\lambda}(\rn)}.    
		\end{align*}
		Moreover, by the invariance of $\mathscr{L}^{-1}_{2,N-\lambda}(\rn)$ with respect to the scaling map $f_{\lambda}(\cdot)=\delta^{\lambda/2+1}f(\delta \cdot)$, $\delta>0$ it holds that
		\begin{align*}
			\sup_{t>0}t^{\frac{\lambda}{4}+\frac{1}{2}}\|e^{t\Delta}f\|_{L^{\infty}(\rn)}\leq C\|f\|_{\mathscr{L}^{-1}_{2,N-\lambda}(\rn)}    
		\end{align*}
		which produces the desired bound.
	\end{proof}
	It is worth pointing out that the membership of a distribution $f$ in $\mathscr{L}^{-1}_{2,N-\lambda}(\rn)$ may be interpreted in terms of Carleson measures. 
	\begin{definition}
		Let $\alpha>0$. A positive measure $\mu$ in $\rn\times \mathbb{R}^+$ is a $\left(parabolic\right)$ $\alpha$-Carleson measure if 
		\begin{equation*}
			\sup_{B\subset \rn}\dfrac{\mu(T(B))}{|B|^{\alpha}}<\infty    
		\end{equation*}
		where the supremum is taken over all balls in $\rn$ and $T(B)$ is the (parabolic) Carleson box $T(B_r(x))=B_r(x)\times (0,r^2]$ for $x\in \rn$ and $r>0$.
	\end{definition}
	By this definition,  it is easy to see that $f$ belongs to $\mathscr{L}^{-1}_{2,N-\lambda}(\rn)$, $N>2$ implies that $d\mu(x,t)=|n(x,t)|^2dxdt$ is a $(1-\frac{\lambda}{N})$-Carleson measure. Thus, $\mathscr{L}^{-1}_{2,N-\lambda}(\rn)$ may be identified with the dual of certain tent space, we refer the interested reader to \cite{Alex}.

\end{document}